\documentclass{article}
\usepackage[english]{babel}
\usepackage{amsmath,amsthm,amssymb,amsfonts}
\usepackage[a4paper,margin=2cm]{geometry}
\newcommand{\din}{\delta_{in}}
\newcommand{\dout}{\delta_{out}}
\newcommand{\cin}{c_{in}}
\newcommand{\cout}{c_{out}}
\newcommand{\ocin}{{\overline{c}_{in}}}
\newcommand{\ocout}{{\overline{c}_{out}}}
\newcommand{\of}[1]{{\overline{f_{#1}}}}
\newcommand{\E}{{\mathbf E}}
\newtheorem{theorem}{Theorem}
\newtheorem{lemma}{Lemma}
\newcommand{\Beta}{\mathrm{B}}
\title{The degree distribution and the number of edges between nodes of given degrees in directed scale-free graphs}

\author{Evgeniy A. Grechnikov\footnote{Research division in Yandex, Moscow.}}

\date{}
\begin{document}
\maketitle
\begin{abstract}

In this paper, we study some important statistics of the random graph in the
directed preferential attachment model
introduced by B. Bollob\'as, C. Borgs, J. Chayes and O. Riordan.
First, we find a new asymptotic formula for the expectation of the number $ n_{in}(d,t) $ 
of nodes of a given in-degree $ d $ in a graph in this model with $t$ edges, which covers all possible degrees.
The out-degree distribution in the model is symmetrical to the in-degree distribution.
Then we prove tight concentration for $n_{in}(d,t)$ while $d$ grows up to the moment when $n_{in}(d,t)$ decreases to $\ln^2 t$;
if $d$ grows even faster, $n_{in}(d,t)$ is zero \textbf{whp}.
Furthermore, we study a 
more complicated statistic of the graph: $ X(d_1,d_2,t) $ is the total number of edges from a vertex of out-degree
$d_1$ to a vertex of in-degree $d_2$. We also find an asymptotic formula for the expectation of $ X(d_1,d_2,t) $ and prove a tight 
concentration result.

\end{abstract}

\section{Introduction}
The real world has many interesting structures which can be thought of as graphs.
A typical example is the World Wide Web: one can consider web pages to be vertices
of a graph and hyperlinks to be edges. One of productive methods for
studying these graphs involves investigation of a suitable random graph model.

First models of random graphs were constructed and investigated
long ago. Classical models and results are systematized, for example,
in \cite{bollobasbook} and \cite{Luc}. However, they are not suitable
for approximation of dynamically changing and non-uniform networks.
In particular, the degree sequences of the graphs in these models are very far 
from those observed in reality. 

Recently other models of random graphs were constructed to more closely
match the growth of real networks. One of the first descriptions 
of such a model belongs
to the article \cite{barabasi} by Barab\'asi and Albert. The graph in this model
grows sequentially in discrete steps. Each step adds one vertex and several edges
that connect a new vertex with existing ones according to the ``preferential attachment''
rule: the probability of an existing vertex to receive a new edge
depends on the current degree of this node, so
more ``popular'' nodes are more attractive for new edges. In \cite{barabasi},
the probability is proportional to the current degree; \cite{barabasi} gives
some heuristic arguments suggesting that the number of vertices with degree $d$
decreases proportional to $d^{-3}$. The same quantity in real networks decreases
proportional to $d^{-\gamma}$ with different $\gamma$
for different networks, following the so called ``power law''.
Later, Bollob\'as, Riordan et al. proposed an explicit model
in \cite{bollobas1} based on the preferential attachment rule.
The model of \cite{bollobas1} resolves some ambiguities of \cite{barabasi};
also, \cite{bollobas1} rigorously proves a theorem about degree sequence.

In \cite{dorogovtsev} and \cite{drinea} two groups of researchers independently
proposed to add to the model one more parameter --- an ``initial attractiveness''
of a node which is a positive constant not depending on the degree.
Equivalently, the probability in the proposed model is a linear function
in the degree. Articles \cite{dorogovtsev} and \cite{drinea} use some heuristic
arguments to show that the model allows to obtain the power law for degree sequence
with any exponent less than $-2$. Buckley and Osthus in \cite{buckley} formalized
the model and rigorously proved the power law when all the parameters are natural numbers
and the degree grows slowly compared to the number of vertices.
We analyzed this model in \cite{grechnik}, removing the restriction on degree and
considering also the quantity similar to $X(t,d_1,d_2)$ in the current work.

Cooper and Frieze analyzed a quite general model in \cite{frieze}.
There are two different procedures for updating the graph in this model: adding a new vertex with
several edges and adding several edges to an already existing vertex. At every step, one of
those procedures is selected at random independently of other steps.

All models mentioned above yield essentially an undirected graph;
although edges have a natural direction, the out-degree sequence is unnatural.
In particular, in models \cite{bollobas1} and \cite{buckley} all vertices have the same
out-degree. Two models of directed graphs were suggested in \cite{chayes} and \cite{cooper}.
Both models have three different procedures
for updating the graph and select one of them at random for every step independently.
Both models mix preferential attachment with uniform random selection of source and target vertices,
but the details of mixing are different: \cite{chayes} follows \cite{buckley} and assigns probabilities that are proportional
to a linear shift of vertex degree, while \cite{cooper} follows \cite{frieze} and uses weighted sum
of probability of preferential attachment (that is proportional to vertex degree) and probability of uniform selection.
Also, model of \cite{chayes} always creates a vertex with one edge (incoming or outgoing) in the name of simplicity,
while model of \cite{cooper} allows an arbitrary finite distribution for number of edges created with a new vertex
in the name of generality. These models tend to give similar results, though.

\section{The model and formulation of results}
We analyze the model $\mathcal G(t)$ of a random graph introduced in \cite{chayes}.
Denote the in-degree of a vertex $v$ as $d_{in}(v)$ and the out-degree of a vertex $v$ as $d_{out}(v)$.
\begin{itemize}
\item There are 6 parameters $\alpha\in[0,1]$, $\beta\in[0,1]$, $\gamma\in[0,1]$,
$\alpha+\beta+\gamma=1$, $\din\ge0$, $\dout\ge0$, a graph $G_0$.
\item The graph $G_0$ should contain at least one vertex. If $\din=0$ or $\dout=0$,
it should also contain at least one edge.
\item There is also time $t\in\mathbb{Z}$, $t\ge t_0$, where $t_0$ is the number of
edges in $G_0$.
\item The probability space at time $t_0$ contains only one graph $G_0$.
\item Given a random graph $G$ at time $t$, a random graph at time $t+1$
is constructed from $G$ by one of the following processes.
\begin{enumerate}
\item[$\dagger$] With probability $\alpha$, add a new vertex $v$ and an edge from $v$
to one of vertices in $G$. The target vertex is selected randomly, a vertex
$w\in G$ is selected with probability $\frac{d_{in}(w)+\din}{t+\din n}$.
\item[$\dagger\dagger$] With probability $\beta$, add an edge from a random vertex $v\in G$ to
a random vertex $w\in G$. A vertex $v$ is selected with
probability $\frac{d_{out}(v)+\dout}{t+\dout n}$,
a vertex $w$ is selected independently of $v$ with probability
$\frac{d_{in}(w)+\din}{t+\din n}$.
\item[$\ddagger$] With probability $\gamma$, add a new vertex $w$ and an edge from one
of vertices in $G$ to $w$. The source vertex is selected randomly, a vertex
$v\in G$ is selected with probability $\frac{d_{out}(v)+\dout}{t+\dout n}$.
\end{enumerate}
\item It is easy to see that a random graph at time $t$ has exactly $t$ edges
and a random number of vertices concentrated around $(\alpha+\gamma)t$.
\end{itemize}
We assume that $\alpha+\gamma>0$; otherwise number of vertices would not change over time.

Let $\ocin=\frac{1-\gamma}{1+\din(\alpha+\gamma)}$ and
$\ocout=\frac{1-\alpha}{1+\dout(\alpha+\gamma)}$.

Our first topic is the in-degree sequence in this model. Obviously, the out-degree sequence
has the same structure with exchanging $\alpha\leftrightarrow\gamma$ and $\din\leftrightarrow\dout$.
Let $n_{in}(t,d)$ be a number of vertices with in-degree $d$ in a random graph $G\in\mathcal G(t)$.
There are two special cases for the in-degree sequence that are not interesting.
If $\alpha+\beta=0$, then $\ocin=0$ and every vertex not in $G_0$ has in-degree 1.
If $\gamma+\din=0$, then $\ocin=1$ and every vertex not in $G_0$ has in-degree 0.
Otherwise, $0<\ocin<1$.

Define
\begin{eqnarray*}
\of0&=&\frac\alpha{1+\din\ocin},\\
\of d&=&C_{in}\frac{\Gamma(d+\din)}{\Gamma\left(d+\din+1+\frac1\ocin\right)}\mbox{ for }d\ge1,\\
C_{in}&=&\frac{\Gamma\left(\din+\frac1\ocin\right)}{\Gamma(1+\din)}\frac{1-\ocin}{\ocin^2}.
\end{eqnarray*}
Then $\of d\sim C_{in}d^{-1-\frac1\ocin}$ as $d$ grows due to a standard result about the gamma-function
(e.g. \cite[6.1.47]{specfunc}).

The article \cite{chayes} proves that $n_{in}(t,d)=\of dt(1+o_d(1))$ (with probability tending to 1 as $t\to\infty$),
i.e. the power law when $d$ is fixed.
The article \cite{cooper} proves that $n_{in}(t,d)=\of dt\left(1+O\left(\frac1{\sqrt{\log t}}\right)\right)$
(again, with probability tending to 1 as $t\to\infty$) for $d\le\min\left\{t^{\ocin/3},\frac{t^{1/6}}{\log^2t}\right\}$
(in the model of \cite{cooper} after an appropriate mapping of parameters); for the maximal value of $d$ covered
by \cite{cooper} we have $n_{in}(t,d)\gg t^{\max\left\{\frac{2-\ocin}3,\frac{5-1/\ocin}6\right\}}\ge\sqrt t$.
Our results are valid for all possible degrees,
give the concentration up to the moment where $n_{in}(t,d)$ becomes $O(\ln^2 t)$
and have much better remainder term for values of $d$ covered by \cite{chayes} and \cite{cooper}.

\begin{theorem} Let $\alpha+\beta>0$ and $\gamma+\din>0$. Let $\varepsilon>0$ be arbitrarily small. Then
$$
\E n_{in}(t,d)=\of dt+O\left((d+1)^{-1+\varepsilon}\right).
$$
\end{theorem}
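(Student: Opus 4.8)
\emph{Proof plan.} The plan is to set up the one‑step recurrence for $N_{d,t}:=\E\, n_{in}(t,d)$, to recognise $\of d\, t$ as the exact solution of an idealised version of it, and to estimate the discrepancy by solving a linear recurrence and inducting on $d$. First I would condition on the graph $G$ at time $t$, which fixes $n_{in}(t,d)$ and the vertex count $n=n(t)$, hence all selection probabilities: a vertex of in-degree $d\ge1$ receives a new in-edge in step $t\to t+1$ exactly when $\dagger$ or $\dagger\dagger$ fires and it is the chosen target (conditional probability $(\alpha+\beta)\tfrac{d+\din}{t+\din n}$), while a new in-degree-$0$ vertex appears iff $\dagger$ fires and a new in-degree-$1$ vertex iff $\ddagger$ fires. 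Writing $u(t):=t+\din n(t)$, $n_{in}(t,-1):=0$, and $C_{d,t}:=(\alpha+\beta)\big((d-1+\din)n_{in}(t,d-1)-(d+\din)n_{in}(t,d)\big)$, this gives
$$\E\big[n_{in}(t+1,d)\mid G\big]=n_{in}(t,d)+\frac{C_{d,t}}{u(t)}+\gamma\mathbf 1_{d=1}+\alpha\mathbf 1_{d=0}.$$
Everything here is linear in the $n_{in}(t,\cdot)$, so the only obstruction to a closed recurrence for $N_{d,t}$ is the random denominator $u(t)$.

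To deal with it, note that $n(t)$ increases by $1$ with probability $\alpha+\gamma$ and otherwise stays put, independently of every target and source selection; hence $n(t)-n(t_0)$ is binomial, $\E\,u(t)=St+c$ with $S:=1+\din(\alpha+\gamma)$ and a constant $c=c(G_0)$, and $u(t)$ lies within $O(\sqrt{t\log t})$ of its mean with overwhelming probability. Since $\tfrac{1-\gamma}{S}=\ocin$ we have $\tfrac{\alpha+\beta}{St}=\tfrac{\ocin}{t}$ and, importantly, $\tfrac{\alpha+\beta}{St+c}=\tfrac{\ocin}{t+c/S}$, so the deterministic part of $u(t)$ is merely an $O(1)$ shift of the time variable and costs $O(1)$ in the answer. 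Replacing $\E[C_{d,t}/u(t)]$ by $\E[C_{d,t}]/(St)$ and retaining the remainder, one should arrive at
$$N_{d,t+1}=N_{d,t}+\frac{\ocin}{t}\big((d-1+\din)N_{d-1,t}-(d+\din)N_{d,t}\big)+\gamma\mathbf 1_{d=1}+\alpha\mathbf 1_{d=0}+\mathrm{Err}_{d,t},$$
where $\mathrm{Err}_{d,t}$ combines a Jensen-type piece $\asymp\din^2\,\mathrm{Var}\,n(t)\cdot(St)^{-3}\cdot\E[C_{d,t}]=\Theta(\of d/t)$ with a piece $\asymp\din\,t^{-2}\,\mathrm{Cov}(C_{d,t},n(t))$. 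I expect the main obstacle to be the \emph{sharp} covariance bound $\big|\mathrm{Cov}(n_{in}(t,d),n(t))\big|=O(\of d\, t)$: Cauchy--Schwarz gives only $O(\sqrt{\of d}\,t)$, which is polynomially too weak when $\ocin$ is close to $1$. I would prove it by conditioning on the entire sequence of process choices, writing $M_{d,t}:=\E[n_{in}(t,d)\mid\text{choices}]$ as the explicit solution of the recurrence with the resulting measurable coefficients $1/u(\tau)$, and bounding the effect on $M_{d,t}$ of flipping a single choice: it moves one birth time and perturbs a few denominators, and of all birth times only the $\asymp\of d\, t$ whose vertex can plausibly end at in-degree exactly $d$ contribute, each at the $\asymp1/d$ level. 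This would yield $\sum_\tau|\mathrm{Err}_{d,\tau}|\,\pi_d(\tau,t)=O(\of d/d)$, with $\pi_d$ as below.

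It remains to solve the recurrence. One checks directly that $N_{d,t}=\of d\, t$ is the exact solution when $\mathrm{Err}\equiv0$: for $d=0$, $d=1$, $d\ge2$ this reads respectively $\of0(1+\ocin\din)=\alpha$, $\of1(1+\ocin(1+\din))=\ocin\din\of0+\gamma$ (using $\ocin=(1-\gamma)/(1+\din(\alpha+\gamma))$), and $\of d/\of{d-1}=(d-1+\din)/(d+\din+1/\ocin)$, all immediate from the stated definition of $\of d$. Hence $e_{d,t}:=N_{d,t}-\of d\, t$ satisfies $e_{d,t+1}=\big(1-\tfrac{\ocin(d+\din)}{t}\big)e_{d,t}+\tfrac{\ocin(d-1+\din)}{t}e_{d-1,t}+\mathrm{Err}_{d,t}$, which unwinds, with $\pi_d(\tau,t):=\prod_{\tau<s<t}\big(1-\ocin(d+\din)/s\big)\asymp(\tau/t)^{\ocin(d+\din)}$, into
$$e_{d,t}=\sum_{\tau}\Big(\frac{\ocin(d-1+\din)}{\tau}e_{d-1,\tau}+\mathrm{Err}_{d,\tau}\Big)\pi_d(\tau,t)+e_{d,t_0}\,\pi_d(t_0,t).$$
Using $\sum_\tau\tfrac1\tau(\tau/t)^{\ocin(d+\din)}\le\tfrac1{\ocin(d+\din)}$ and setting $A_d:=\sup_t|e_{d,t}|$, $B'_d:=\sup_t\sum_\tau|\mathrm{Err}_{d,\tau}|\pi_d(\tau,t)+|e_{d,t_0}|$, the first sum is $\le\tfrac{d-1+\din}{d+\din}A_{d-1}$, so $A_d\le\tfrac{d-1+\din}{d+\din}A_{d-1}+B'_d$; the factors telescope from level $j$ to $d$ into $\tfrac{j+\din}{d+\din}$, giving $A_d\le\tfrac{\din}{d+\din}A_0+\tfrac1{d+\din}\sum_{1\le j\le d}(j+\din)B'_j$. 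Now $A_0=O(1)$; $e_{d,t_0}=n_{in}(t_0,d)-\of d\, t_0$ equals $-\of d\, t_0$ once $d$ exceeds the maximal in-degree in $G_0$, so $|e_{d,t_0}|=O(\of d)$ for large $d$ and $O(1)$ for the finitely many remaining $d$; together with $\sum_\tau|\mathrm{Err}_{d,\tau}|\pi_d(\tau,t)=O(\of d/d)$ this gives $B'_j=O(\of j)=O(j^{-1-1/\ocin})$ for large $j$, whence $\sum_j(j+\din)B'_j=O\big(\sum_j j^{-1/\ocin}\big)+O(1)=O(1)$ (here $1/\ocin>1$ because $\gamma+\din>0$ and $\alpha+\gamma>0$). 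Thus $A_d=O(1/(d+\din))$ in the main range of $d$, which is $O((d+1)^{-1+\varepsilon})$; the loss of $\varepsilon$ is genuinely incurred (and needed) only once $d$ is so large relative to $t$ that the preceding estimates degrade and must be replaced by cruder bounds on $n_{in}(t,d)$.
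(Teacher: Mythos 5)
Your overall strategy is genuinely different from the paper's: you work with the unconditional expectation and a one-variable recurrence in $t$, paying for the random denominator $u(t)=t+\din n(t)$ with variance/covariance error terms, whereas the paper conditions on the vertex count, proves $E_d(T,N)=Tf_d(N/T)+O\bigl(1/(1+(d+\din+1)^{1-\varepsilon}N^2/T^2)\bigr)$ by a two-variable induction with a carefully designed error envelope, and only then averages over the (binomial) vertex count. The difficulty you correctly identify as the ``main obstacle'' --- a covariance bound of the form $|\mathrm{Cov}(n_{in}(t,d),n(t))|=O(\of d\,t)$ --- is, however, exactly where your proposal has a genuine gap. The single-flip sketch (``it moves one birth time and perturbs a few denominators'') is not a proof and understates what a flip does: changing one process choice from $\dagger\dagger$ to $\dagger$ (or $\ddagger$) changes $n(\tau)$ by $1$ for \emph{all} later times, hence shifts every subsequent attachment probability through the denominators $\tau+\din n(\tau)$, and the cumulative effect of that persistent shift on $\E n_{in}(t,d)$ is precisely the derivative of the conditional expectation with respect to the vertex count. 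Bounding that derivative by $O(\of d)$ (in fact one should expect an extra $\ln d$, coming from $f_d'(x)=O\bigl((d+1)^{-1-1/\cin}\ln(d+2)\bigr)$) uniformly in $d$ is equivalent in substance to the smoothness-in-$N$ statement that constitutes the paper's Lemma~\ref{expect1}; so the covariance estimate cannot simply be invoked --- it is the theorem's core, and your plan defers it to a heuristic. Relatedly, your ``Jensen-type'' term silently uses $\E[C_{d,t}\delta^2]\approx\E[C_{d,t}]\E[\delta^2]$, another decorrelation statement of the same kind, and the expansion of $1/u(t)$ needs a separate treatment of the exponentially rare large deviations of $n(t)$ (the paper does this via Azuma--Hoeffding at the averaging step).

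The second weak point is uniformity in $d$. The theorem is claimed for all $d$, including $d$ comparable to $t$, and the paper spends real effort there: a separate estimate for $d\ge T+t_0$ (the probability that essentially all edges hit one vertex), plus an envelope $C(d,N/T)$ depending on $N/T$ so that the induction closes for every $d<T+t_0$; the factor $\varepsilon$ in the exponent is what makes the inductive inequality \eqref{l1.temp} contract. In your scheme this regime is dismissed with ``cruder bounds on $n_{in}(t,d)$,'' but nothing is said about how the unwinding with $\pi_d(\tau,t)\asymp(\tau/t)^{\ocin(d+\din)}$, the error bound $\mathrm{Err}_{d,\tau}$, and the claimed covariance estimate survive when $d$ is a positive fraction of $t$ (where $\of d\,t\ll1$ and $n_{in}(t,d)$ is a rare indicator-like variable, so the heuristics behind $O(\of d\,t)$ covariance are least convincing). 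The fact that your bookkeeping lands on $A_d=O(1/(d+\din))$, strictly stronger than the stated $O((d+1)^{-1+\varepsilon})$, is consistent with these optimistic steps (the missing $\ln d$ alone would not hurt, but it signals that the error accounting is not tight). The skeleton --- exact solution $\of d\,t$ of the idealised recurrence, unwinding with $\pi_d$, induction on $d$ via $A_d\le\frac{d-1+\din}{d+\din}A_{d-1}+B_d'$ --- is sound and could be made to work, but only after the covariance/sensitivity estimate is actually proved, and at that point you will have rebuilt something very close to the paper's conditional-expectation lemma.
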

\begin{theorem} Let $\alpha+\beta>0$ and $\gamma+\din>0$. Let $\varepsilon>0$ be arbitrarily small. Let $d=d(t)$. Then
$$
|n_{in}(t,d)-\of d t|\le\left(\sqrt{\of d t}+(d+1)^{-\frac12+\varepsilon}\right)\ln t
$$
with probability tending to 1 as $t\to\infty$.
\end{theorem}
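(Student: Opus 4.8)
The plan is to reduce the statement to a bound on $\mathrm{Var}\,n_{in}(t,d)$ and then finish with Chebyshev's inequality; since we only need the probability to tend to $1$, no exponential concentration is required. First I would use Theorem 1, applied with $\varepsilon/2$ in place of $\varepsilon$, to write $\E n_{in}(t,d)=\of d t+O\big((d+1)^{-1+\varepsilon/2}\big)$, and note that $(d+1)^{-1+\varepsilon/2}=o\big((d+1)^{-1/2+\varepsilon}\ln t\big)$ uniformly in $d$; hence it suffices to prove
$$\Pr\Big(\big|n_{in}(t,d)-\E n_{in}(t,d)\big|>\tfrac12\big(\sqrt{\of d t}+(d+1)^{-1/2+\varepsilon}\big)\ln t\Big)\longrightarrow 0 .$$
By Chebyshev's inequality and the elementary bound $(\sqrt a+\sqrt b)^2\ge a+b$, the left-hand side is at most $4\,\mathrm{Var}\,n_{in}(t,d)\big/\big((\of d t+(d+1)^{-1+2\varepsilon})\ln^2 t\big)$, so the whole theorem follows once one proves
$$\mathrm{Var}\,n_{in}(t,d)=O\!\left(\of d t+(d+1)^{-1+2\varepsilon}\right)$$
with the implied constant uniform in $d$.

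To bound the variance I would set up a recursion in $t$. Let $\mathcal{F}_t$ be the $\sigma$-algebra of the first $t$ steps and $\Delta_t=n_{in}(t+1,d)-n_{in}(t,d)$. The update rules give, for $d\ge2$,
$$\E[\Delta_t\mid\mathcal{F}_t]=(\alpha+\beta)\,\frac{(d-1+\din)\,n_{in}(t,d-1)-(d+\din)\,n_{in}(t,d)}{t+\din n}$$
(with the convention $n_{in}(t,-1)=0$, and an extra $O(1)$ term for $d\in\{0,1\}$ coming from newly created vertices), while $|\Delta_t|\le 2$ always, so $\E[\Delta_t^2\mid\mathcal{F}_t]=O\big((d+1)(n_{in}(t,d-1)+n_{in}(t,d))/t\big)$ up to the same $O(1)$ correction. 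Squaring $n_{in}(t+1,d)=n_{in}(t,d)+\Delta_t$, taking expectations, subtracting $(\E n_{in}(t,d))^2$, and inserting the estimates of Theorem 1 together with the concentration of $n$ around $(\alpha+\gamma)t$, one gets for $V_t:=\mathrm{Var}\,n_{in}(t,d)$ a recursion of the schematic shape
$$V_{t+1}=\Big(1-\tfrac{2(\alpha+\beta)(d+\din)}{t+\din n}\Big)V_t+\tfrac{2(\alpha+\beta)(d-1+\din)}{t+\din n}\,\mathrm{Cov}\big(n_{in}(t,d),n_{in}(t,d-1)\big)+O\!\Big((d+1)\of d+\tfrac{(d+1)^{2\varepsilon}}{t}\Big).$$
The decisive feature is the contraction factor: iterating $1-\tfrac{2(\alpha+\beta)(d+\din)}{t+\din n}$ from a time $s$ up to $t$ gives weight $\asymp(s/t)^{2(d+\din)\ocin}$ (here one uses $\frac{\alpha+\beta}{1+\din(\alpha+\gamma)}=\ocin$), and $\sum_{s\le t}(s/t)^{2(d+\din)\ocin}\cdot(d+1)\of d\asymp\of d t$ \emph{uniformly in $d$} — the extra factor $d+1$ in the ``noise'' is exactly compensated by the factor $1/(2(d+\din)\ocin+1)$ produced by the summation. (Without the damping one would only obtain $\sum_t\E[\Delta_t^2]\asymp(d+1)\of d t$, too large by a factor of order $d$, which is why a plain bounded-difference argument does not suffice.) A parallel computation disposes of the remaining inhomogeneous terms and yields $V_t=O(\of d t+(d+1)^{-1+2\varepsilon})$, the worsening from $\varepsilon$ to $2\varepsilon$ absorbing the secondary errors: the fluctuation of $n$ around $(\alpha+\gamma)t$, the fixed initial graph $G_0$, and the squared $O((d+1)^{-1+\varepsilon/2})$ errors from Theorem 1.

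The real obstacle — and where I expect most of the work — is the covariance term. Since the evolution of bin $d$ couples to that of bin $d-1$, of bin $d-1$ to bin $d-2$, and so on, $V_t$ cannot be treated in isolation; instead one should prove a single uniform estimate
$$\big|\mathrm{Cov}\big(n_{in}(t,i),n_{in}(t,j)\big)\big|=O\!\big(\of i t+\of j t+(i+1)^{-1+2\varepsilon}+(j+1)^{-1+2\varepsilon}\big)$$
for all $i,j$ (the diagonal $i=j$ being $V_t$), by induction on $i+j$: the recursion for $\mathrm{Cov}(n_{in}(t,i),n_{in}(t,j))$ has the same structure, its self-coefficient is again a contraction, now of order $(i+j)\ocin$, and every other term on the right refers either to a covariance with a strictly smaller index sum or to the $O((i+1)\of i)$-type noise already analysed; the base cases are the finitely many small bins (in particular $i=j=0$, and the degenerate sub-case $\din=0$, $d=0$, where bin $0$ never loses a vertex and $n_{in}(t,0)$ is essentially a sum of independent indicators). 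Carrying the error terms through this coupled linear system while keeping all constants uniform in $d$ is the delicate step; once $\mathrm{Var}\,n_{in}(t,d)=O(\of d t+(d+1)^{-1+2\varepsilon})$ is in hand, Chebyshev's inequality as in the first step completes the proof. (Alternatively one could run the step-by-step Doob martingale of $n_{in}(t,d)$ and apply Freedman's inequality, but estimating its predictable quadratic variation amounts to essentially the same second-moment analysis.)
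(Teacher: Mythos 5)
Your overall route coincides with the paper's: reduce to a variance bound and finish with Chebyshev (this is exactly how Theorem~2 is deduced), and control the second moment through the one-step evolution and an induction over degree pairs, which is what Lemma~\ref{dbound} does in the form $D_{d_1,d_2}(T,N)\approx T^2f_{d_1}(N/T)f_{d_2}(N/T)$, proved by induction on $T$ and on $d_1+d_2$ conditionally on the vertex count. Your analysis of the diagonal is also sound: the contraction $1-2\ocin(d+\din)/t$ against per-step noise of size $\asymp d\,\of d$ is precisely why the variance is $O(\of d t)$ rather than $O(d\,\of d t)$.

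The genuine gap is in the step you yourself flag as delicate: the covariance induction with the ansatz $|\mathrm{Cov}(n_{in}(t,i),n_{in}(t,j))|=O\big((\of i+\of j)t+\cdots\big)$, handled by absolute values, does not close with a constant uniform in the degrees. Two facts conspire. First, for adjacent bins the increment covariance is $\E[\Delta^{(d)}\Delta^{(d-1)}\mid\mathcal F_t]\approx-\ocin(d-1+\din)\,n_{in}(t,d-1)/t$, i.e.\ of size $\asymp d\,\of d$ per step, the same order as the diagonal noise; summed against the contraction it contributes a full $\Theta(\of d t)$ to the pair $(d,d-1)$. Second, in the recursion for $\mathrm{Cov}(i,j)$ the coefficients of $\mathrm{Cov}(i-1,j)$ and $\mathrm{Cov}(i,j-1)$ add up to essentially the contraction rate $\ocin(i+j+2\din)/t$, and (using $(d-1+\din)\of{d-1}=(d+\din+\tfrac1\ocin)\of d$) your ansatz reproduces itself only with factor $1-\Theta\big(1/(i+j)\big)$; there is therefore no room to absorb an extra $\Theta(\of d t)$, and the admissible constant must grow by $\Theta(1)$ at each level of $i+j$. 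The bookkeeping as described then yields only $\mathrm{Var}\,n_{in}(t,d)=O(d\,\of d\,t)$, and Chebyshev gives a bound $O(d/\ln^2 t)$, which fails exactly in the regime of large $d$ that the theorem is really about. What rescues the argument is a cancellation that absolute values destroy: the negative adjacent-bin noise is offset by the positive drift coming from the diagonal (to leading order $\mathrm{Var}\approx\E n_{in}$). The paper exploits this by centering the pair counts at $T^2f_{d_1}f_{d_2}$ and choosing in Lemma~\ref{dbound} a majorant $C_1+C_2+C_3$ that decays in \emph{both} degrees, with the $\varepsilon$-gain of \eqref{l1.temp} absorbing the cross-couplings; some such refinement (a signed or centred version of your covariance system), together with conditioning on the vertex count to tame the random denominator $t+\din n$ as the paper does, is needed before your plan becomes a proof.
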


When $d=o\left(\left(\frac{t}{\ln^2 t}\right)^{\frac\ocin{1+\ocin}}\right)$, that is equivalent
to $\ln^2t=o\left(\of d t\right)$,
Theorem 2 implies the equivalence (with probability tending to 1 as $d,t\to\infty$)
$$
n_{in}(t,d)\sim C_{in}d^{-1-\frac1\ocin}t.
$$
When $t^{\frac\ocin{1+\ocin}}=o(d)$, Theorem 1 implies $\E n_{in}(t,d)=o(1)$; since $n_{in}(t,d)$
is an integer number by definition, $n_{in}(t,d)=0$ (again, with probability tending to 1 as $d,t\to\infty$).
Thus, we have an almost entire picture of what happens to $ n_{in}(t,d) $. 

Our second topic concerns the expected number of edges (or probability of an edge) between two given vertices.
Suppose that we know their in- and out-degrees, but nothing else. For a fixed vertex, in-degree and out-degree
are essentially independent due to the construction (although they both tend to grow with age). Thus,
we use only out-degree $d_1$ of the potential source and in-degree $d_2$ of the potential target.
Theorems 1 and 2 give the total number of vertices with in-degree $d_2$ and the total number of vertices
with out-degree $d_1$. Thus, we estimate the total number of edges between vertices of given degrees.
For this topic, we consider $d_1$ and $d_2$ to be fixed, not growing with $t$, to simplify
calculations somewhat.

More precisely, we define the random variable $X(t,d_1,d_2)$ as the total number of edges
in a random graph $G\in\mathcal G(t)$
with the following property: the out-degree of the source vertex is $d_1\ge1$,
the in-degree of the target vertex is $d_2\ge1$, source and target vertices are different
(i.e. for $d_1=d_2$ we do not count loops).

Define $$\kappa(c_1,c_2,r,x)=\int_0^x z^{c_1-1}dz\int_0^\infty\tau^{c_1r+c_2-1}e^{-\tau-z\tau^r}d\tau=
x^{c_1}\int_0^1 z^{c_1-1}dz\int_0^\infty\tau^{c_1r+c_2-1}e^{-\tau-xz\tau^r}d\tau$$ for $c_1>0$, $c_2>0$, $r>0$, $x\ge0$.
As a function of $x$, it monotonically increases from $0$ to $\Gamma(c_1)\Gamma(c_2)$ as $x$ grows from $0$ to $\infty$.
The asymptotic behaviour when $x\to0$ is given by $\kappa(c_1,c_2,r,x)=\left(\frac{\Gamma(c_1r+c_2)}{c_1}+O(x)\right)x^{c_1}$.

Note: in the special case $r=1$ the inner integral can be calculated and $\kappa$ becomes the incomplete beta-function:
$\kappa(c_1,c_2,1,x)=\Gamma(c_1+c_2)\Beta\left(\frac x{1+x};c_1,c_2\right)$. We won't use this fact, but it can be useful
to conceive the function $\kappa$.

\begin{theorem} Let $\alpha>0$, $\beta>0$, $\gamma>0$, $\din>0$, $\dout>0$, $d_1\ge2$, $d_2\ge2$.
Then $\E X(t,d_1,d_2)=c_X(d_1,d_2)t+O_{d_1,d_2}(1)$, where
$$c_X(d_1,d_2)=c_{X1}(d_1,d_2)+c_{X2}(d_1,d_2)+c_{X3}(d_1,d_2)$$ and
\begin{equation*}
c_{X1}(d_1,d_2)=d_1^{-\frac1\ocout}d_2^{-\frac\ocout\ocin-1}
\sum_{i=0}^1 A_i
\kappa\left(\dout+\frac1\ocout+i,\din+\frac\ocout\ocin+1,\frac\ocout\ocin,\frac{d_1}{d_2^{\ocout/\ocin}}\right)
\left(1+O\left(\frac1{d_1}+\frac1{d_2}\right)\right),
\end{equation*}
$A_0=\frac{\gamma^2}{(1+\dout(\alpha+\gamma))\ocin\ocout\Gamma(\dout)\Gamma(1+\din)}$,
$A_1=\frac{\alpha\gamma}{(1+\dout(\alpha+\gamma))\ocin\ocout\Gamma(1+\dout)\Gamma(1+\din)}$,
\begin{multline*}
c_{X2}(d_1,d_2)=
d_1^{-\frac\ocin\ocout-1}d_2^{-\frac1\ocin}\sum_{i=0}^1 B_i
\kappa\left(\din+\frac1\ocin+i,\dout+\frac\ocin\ocout+1,\frac\ocin\ocout,\frac{d_2}{d_1^{\ocin/\ocout}}\right)
\left(1+O\left(\frac1{d_1}+\frac1{d_2}\right)\right),
\end{multline*}
$B_0=\frac{\alpha^2}{(1+\din(\alpha+\gamma))\ocin\ocout\Gamma(1+\dout)\Gamma(\din)}$,
$B_1=\frac{\alpha\gamma}{(1+\din(\alpha+\gamma))\ocin\ocout\Gamma(1+\dout)\Gamma(1+\din)}$.

If $\ocin+\ocout\ne1$, then
\begin{multline*}
c_{X3}(d_1,d_2)=d_1^{-\frac1\ocout}d_2^{-\frac1\ocin}\sum_{i=0}^1\sum_{j=0}^1\frac{C_{ij}}{1-\ocin-\ocout}
\\\times
\Bigg(d_2^{\frac{1-\ocin-\ocout}\ocin}\kappa\left(\dout+\frac1\ocout+i,\din+\frac\ocout\ocin+1+j,\frac\ocout\ocin,\frac{d_1}{d_2^{\ocout/\ocin}}\right)\left(1+O\left(\frac1{d_1}+\frac1{d_2}\right)\right)
\\+d_1^{\frac{1-\ocin-\ocout}\ocout}\kappa\left(\din+\frac1\ocin+j,\dout+\frac\ocin\ocout+1+i,\frac\ocin\ocout,\frac{d_2}{d_1^{\ocin/\ocout}}\right)\left(1+O\left(\frac1{d_1}+\frac1{d_2}\right)\right)
\\-\Gamma\left(\dout+\frac1\ocout+i\right)\Gamma\left(\din+\frac1\ocin+j\right)
+O\left(\frac1{d_1}+\frac1{d_2}\right)\Bigg),
\end{multline*}
$C_{00}=\frac{\beta\alpha\gamma}{(1+\din(\alpha+\gamma))(1+\dout(\alpha+\gamma))\ocin\ocout\Gamma(\dout)\Gamma(\din)}$,\\
$C_{01}=\frac{\beta\gamma^2}{(1+\din(\alpha+\gamma))(1+\dout(\alpha+\gamma))\ocin\ocout\Gamma(\dout)\Gamma(\din+1)}$,\\
$C_{10}=\frac{\beta\alpha^2}{(1+\din(\alpha+\gamma))(1+\dout(\alpha+\gamma))\ocin\ocout\Gamma(\dout+1)\Gamma(\din)}$,\\
$C_{11}=\frac{\beta\alpha\gamma}{(1+\din(\alpha+\gamma))(1+\dout(\alpha+\gamma))\ocin\ocout\Gamma(\dout+1)\Gamma(\din+1)}$.

If $\ocin+\ocout=1$, then
\begin{multline*}
c_{X3}(d_1,d_2)=d_1^{-\frac1\ocout}d_2^{-\frac1\ocin}
\sum_{i=0}^1\sum_{j=0}^1 C_{ij}\Bigg(
\frac1\ocin\kappa\left(\dout+\frac1\ocout+i,\din+\frac1\ocin+j,\frac\ocout\ocin,\frac{d_1}{d_2^{\ocout/\ocin}}\right)\ln d_2
\\-\frac1\ocin\frac{\partial\kappa}{\partial c_2}\left(\dout+\frac1\ocout+i,\din+\frac1\ocin+j,\frac\ocout\ocin,\frac{d_1}{d_2^{\ocout/\ocin}}\right)\\
+\frac1\ocout\kappa\left(\din+\frac1\ocin+j,\dout+\frac1\ocout+i,\frac\ocin\ocout,\frac{d_2}{d_1^{\ocin/\ocout}}\right)\ln d_1
\\-\frac1\ocout\frac{\partial\kappa}{\partial c_2}\left(\din+\frac1\ocin+j,\dout+\frac1\ocout+i,\frac\ocin\ocout,\frac{d_2}{d_1^{\ocin/\ocout}}\right)
+O\left(\frac{\ln d_2}{d_1}+\frac{\ln d_1}{d_2}\right)
\Bigg)
\end{multline*}
(where $C_{ij}$ are same as in the previous case).

If $d_1$ and $d_2$ both grow to infinity such that $d_1^\ocin/d_2^\ocout\to0$, then
$$
c_X(d_1,d_2)\sim\begin{cases}
D_{-}d_1^{-\frac{\ocin+\ocout}\ocout}d_2^{-\frac1\ocin},&\ocin+\ocout<1\\
D_{0}d_1^{-\frac1\ocout}d_2^{-\frac1\ocin}\ln d_1,&\ocin+\ocout=1\\
D_{+}d_1^{-\frac1\ocout}d_2^{-\frac1\ocin},&\ocin+\ocout>1,
\end{cases}
$$
\begin{multline*}
D_{-}=\left(B_0\Gamma\left(\din+\frac1\ocin\right)+B_1\Gamma\left(\din+\frac1\ocin+1\right)\right)\Gamma\left(\dout+\frac\ocin\ocout+1\right)
\\+\sum_{i=0}^1\sum_{j=0}^1\frac{C_{ij}}{1-\ocin-\ocout}\Gamma\left(\din+\frac1\ocin+j\right)\Gamma\left(\dout+\frac\ocin\ocout+1+i\right);
\end{multline*}
$$
D_{0}=\sum_{i=0}^1\sum_{j=0}^1\frac{C_{ij}}{\ocout}\Gamma\left(\dout+\frac1\ocout+i\right)\Gamma\left(\din+\frac1\ocin+j\right);
$$
$$
D_{+}=\sum_{i=0}^1\sum_{j=0}^1\frac{C_{ij}}{\ocin+\ocout-1}\Gamma\left(\dout+\frac1\ocout+i\right)\Gamma\left(\din+\frac1\ocin+j\right).
$$
If $d_1$ and $d_2$ both grow to infinity such that $d_1^\ocin/d_2^\ocout\to\infty$, then
$$
c_X(d_1,d_2)\sim\begin{cases}
D'_{-}d_1^{-\frac1\ocout}d_2^{-\frac{\ocin+\ocout}\ocin},&\ocin+\ocout<1\\
D'_{0}d_1^{-\frac1\ocout}d_2^{-\frac1\ocin}\ln d_2,&\ocin+\ocout=1\\
D_{+}d_1^{-\frac1\ocout}d_2^{-\frac1\ocin},&\ocin+\ocout>1,
\end{cases}
$$
\begin{multline*}
D'_{-}=\left(A_0\Gamma\left(\dout+\frac1\ocout\right)+A_1\Gamma\left(\dout+\frac1\ocout+1\right)\right)\Gamma\left(\din+\frac\ocout\ocin+1\right)
\\+\sum_{i=0}^1\sum_{j=0}^1\frac{C_{ij}}{1-\ocin-\ocout}\Gamma\left(\din+\frac\ocout\ocin+1+j\right)\Gamma\left(\dout+\frac1\ocout+i\right);
\end{multline*}
$$
D'_{0}=\sum_{i=0}^1\sum_{j=0}^1\frac{C_{ij}}{\ocin}\Gamma\left(\dout+\frac1\ocout+i\right)\Gamma\left(\din+\frac1\ocin+j\right).
$$
\end{theorem}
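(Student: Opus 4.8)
The plan is to reduce everything to one two-parameter family of expectations and then solve the resulting linear recurrence. For $j,k\ge1$ put $a_{j,k}(t):=\E\,\#\{\text{edges of }G\in\mathcal G(t)\text{ whose source has out-degree }j\text{ and whose target has in-degree }k\}$. Parallel edges are counted with multiplicity in both $X$ and $a_{j,k}$, so the only discrepancy between $X(t,d_1,d_2)$ and $a_{d_1,d_2}(t)$ is loops; a loop at a vertex $u$ can matter only when $d_1=d_2=d_{in}(u)=d_{out}(u)$, and a short separate estimate bounds the expected number of such loops by $O_{d_1,d_2}(1)$. Hence it suffices to show $a_{d_1,d_2}(t)=c_X(d_1,d_2)t+O_{d_1,d_2}(1)$.

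First I would write a one-step recurrence. Conditioning on $G\in\mathcal G(t)$: an edge counted by $a_{j,k}$ leaves that class exactly when its source acquires a new out-edge --- probability $(\beta+\gamma)\frac{j+\dout}{t+\dout n}$, since only steps $\dagger\dagger$ and $\ddagger$ give out-edges to existing vertices --- or its target acquires a new in-edge, probability $(\alpha+\beta)\frac{k+\din}{t+\din n}$; the reverse transitions feed $(j,k)$ from $(j-1,k)$ and from $(j,k-1)$; and a freshly created edge lands in class $(j,k)$ with probability $g_{j,k}(t)$, which splits by the creating step: step $\dagger$ feeds only $j=1$ through $n_{in}(t,k-1)$, step $\ddagger$ feeds only $k=1$ through $n_{out}(t,j-1)$, and step $\dagger\dagger$ feeds all $j,k\ge1$ through the product $n_{out}(t,j-1)\,n_{in}(t,k-1)$. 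The random denominators are handled via $n-n_0\sim\mathrm{Bin}(t-t_0,\alpha+\gamma)$ (so $\E\,n$ is exact and $\mathrm{Var}\,n=O(t)$), and the covariances needed to replace expectations of products by products of expectations are controlled by bounded-differences bounds for the counts involved; together with Theorems~1 and~2 this gives $g_{j,k}(t)=G_{j,k}+O_{j,k}(1/t)$ with $G_{j,k}$ an explicit constant built from $\of d$ and its out-degree analogue, and transition rates equal to $\ocout(j+\dout)/t$ and $\ocin(k+\din)/t$ up to $O(1/t^2)$ (using $1+\din(\alpha+\gamma)=(1-\gamma)/\ocin$, $1+\dout(\alpha+\gamma)=(1-\alpha)/\ocout$). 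By induction on $j+k$ (with $a_{0,k}\equiv a_{j,0}\equiv0$) and the elementary fact that $x(t+1)=(1-\tfrac\lambda t)x(t)+c+O(1/t)$ with $\lambda>0$ forces $x(t)=\frac c{1+\lambda}t+O_\lambda(1)$, I obtain $a_{j,k}(t)=c_{j,k}t+O_{j,k}(1)$ where
$$c_{j,k}\bigl(1+\ocout(j+\dout)+\ocin(k+\din)\bigr)=\ocout(j-1+\dout)c_{j-1,k}+\ocin(k-1+\din)c_{j,k-1}+G_{j,k}.$$

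The bulk of the work --- and the step I expect to be the main obstacle --- is solving this system explicitly and recognising $\kappa$. I would pass to $F(x,y)=\sum_{j,k\ge1}c_{j,k}x^jy^k$, which turns the recurrence into the first-order linear PDE
$$F+\ocout(1-x)(x\partial_x+\dout)F+\ocin(1-y)(y\partial_y+\din)F=\hat G(x,y),\qquad\hat G(x,y)=\sum_{j,k\ge1}G_{j,k}x^jy^k,$$
and solve it by the method of characteristics. The $x$- and $y$-characteristics move at unequal speeds $\ocout$ and $\ocin$, whose ratio is the invariant behind the variables $d_1/d_2^{\ocout/\ocin}$ and $d_2/d_1^{\ocin/\ocout}$; integrating the source $\hat G$ along a characteristic produces exactly a double integral of the shape that defines $\kappa$. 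Extracting the coefficient of $x^{d_1}y^{d_2}$ and splitting $\hat G$ into its three constituents then gives the three groups of terms: the part of $\hat G$ supported on $k_0=1$ (from step $\ddagger$) yields $c_{X1}$, the part supported on $j_0=1$ (from step $\dagger$) yields $c_{X2}$, and the genuinely bivariate part (from step $\dagger\dagger$, with tail $\asymp j_0^{-1-1/\ocout}k_0^{-1-1/\ocin}$) yields $c_{X3}$.

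In the bivariate case the integral along the characteristic converges at its endpoint when $\ocin+\ocout\ne1$, but must be regularised (by subtracting a divergent constant, equivalently by differentiating in a parameter) when $\ocin+\ocout=1$; this resonance is precisely what replaces the plain $\kappa$-combinations by ones carrying the factors $\ln d_1,\ln d_2$ and the parameter-derivative $\partial\kappa/\partial c_2$. The finitely many explicit constants $A_i,B_i,C_{ij}$ record the boundary of $\hat G$: the vertex born in step $\dagger$ has out-degree $1$ and in-degree $0$ and conversely for step $\ddagger$, so base degrees $0$ and $\ge1$ enter through the two different cases in the definitions of $\of d$ and of its out-degree analogue; matching these constants is mechanical once the characteristics are in place. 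Finally, the stated asymptotics of $c_X(d_1,d_2)$ as $d_1,d_2\to\infty$ with $d_1^\ocin/d_2^\ocout\to0$ or $\to\infty$ follow term by term from $\kappa(c_1,c_2,r,x)=\bigl(\frac{\Gamma(c_1r+c_2)}{c_1}+O(x)\bigr)x^{c_1}$ as $x\to0$ and $\kappa(c_1,c_2,r,x)\to\Gamma(c_1)\Gamma(c_2)$ as $x\to\infty$, the case split there again reflecting $\ocin+\ocout$ versus $1$, and the outcome is bookkept into $D_-,D_0,D_+,D'_-,D'_0$.
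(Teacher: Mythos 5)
Your first stage is essentially the paper's: the limit recurrence you write, $c_{j,k}\bigl(1+\ocout(j+\dout)+\ocin(k+\din)\bigr)=\ocout(j-1+\dout)c_{j-1,k}+\ocin(k-1+\din)c_{j,k-1}+G_{j,k}$, is exactly the equation the paper derives for its limiting function $g$, and your transfer fact for $x(t+1)=(1-\lambda/t)x(t)+c+O(1/t)$ plays the role of its inductive Lemma on $E_X$. The loop correction is fine ($O_{d_1,d_2}(1)$ by the same fixed-point argument). One caveat: the paper keeps the one-step recurrence exact by conditioning on the number of vertices $N$ and only averages over its binomial law at the very end, whereas your unconditional recurrence needs bounds on covariances between the degree/edge counts and $n$ (and between $n_{out}$ and $n_{in}$) to decouple the random denominators and the product feed term. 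Theorem 2 as stated is a whp statement and does not supply these; you would have to prove second-moment or bounded-difference estimates of the type the paper establishes separately (its second-moment lemma for $n_{in}$ and the coupling argument in the proof of Theorem 4). This is repairable, but it is extra work that the conditioning device avoids.

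The genuine gap is at the step you yourself flag as the main obstacle, because that is where almost all of the content of Theorem 3 lies, and the proposal supplies no argument for it. It is not accurate that integrating $\hat G$ along characteristics ``produces exactly a double integral of the shape that defines $\kappa$'': the exact solution of the recurrence is given by Beta-type double integrals over $\{0\le v^{c_1}\le w^{c_2}\le1\}$ with kernels containing $(1-v)^{d_1}(1-w)^{d_2}$ (up to bounded shifts) — the paper's $I_1$, $I_2$, which it simply guesses and verifies by integration by parts — and $\kappa$, with the uniform factors $1+O(1/d_1+1/d_2)$ valid in every regime of $d_1$ versus $d_2$, only emerges after a uniform Laplace-type analysis: separate treatments of $r\ge1$ and $r\le1$, a reflection identity $\kappa(c_1,c_2,r,x)=\Gamma(c_1)\Gamma(c_2)-\kappa(c_2,c_1,1/r,x^{-1/r})$ to handle $d_1\gg d_2^r$, shift lemmas absorbing the $\pm i,\pm j$ offsets, and a separate argument in the resonant case $\ocin+\ocout=1$ that produces the $\ln d_1,\ln d_2$ and $\partial\kappa/\partial c_2$ terms. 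Your plan compresses all of this into ``extracting the coefficient of $x^{d_1}y^{d_2}$'' from a characteristics-defined bivariate generating function with uniform error control across all regimes; no argument is given for that extraction, and there is no off-the-shelf two-variable transfer theorem that delivers these uniform errors. Even granting that the characteristics method reproduces the integral representation of $c_{j,k}$ (it should, since the recurrence determines $c_{j,k}$ uniquely), the uniform asymptotics in $d_1,d_2$ — the paper's chain of lemmas on $I_1$, $I_2$ and $\kappa$ — still has to be done and is the hard part. Your final remark is correct: once those formulas are in hand, the stated $D_-,D_0,D_+,D'_-,D'_0$ asymptotics do follow term by term from the behaviour of $\kappa$ at $0$ and $\infty$.
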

Note: one can see from the proof that $c_{X1}$, $c_{X2}$, $c_{X3}$ have their own physical sense:
they give the fraction of edges produced by $(\ddagger)$, $(\dagger)$ and $(\dagger\dagger)$
respectively (relative to $t$, the total number of edges).

\begin{theorem} Let $d_1$ and $d_2$ be fixed. In conditions of the previous theorem
$$
|X(t,d_1,d_2)-\E X(t,d_1,d_2)|<\sqrt{t}\ln t
$$
with probability tending to 1 as $t\to\infty$.
\end{theorem}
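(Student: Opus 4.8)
The plan is the standard route for this kind of model: form a Doob martingale out of $X(t,d_1,d_2)$ along the evolution of the process and apply the Azuma--Hoeffding inequality. Fix $t$, and for $s=t_0,\ldots,t$ let $\mathcal F_s$ be the $\sigma$-algebra generated by the evolution of the process up to time $s$, and put $M_s=\E[\,X(t,d_1,d_2)\mid\mathcal F_s\,]$. Then $(M_s)_{s=t_0}^{t}$ is a martingale with $M_{t_0}=\E X(t,d_1,d_2)$ and $M_t=X(t,d_1,d_2)$, so everything reduces to controlling the increments $|M_{s+1}-M_s|$.

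The heart of the argument is the claim that $|M_{s+1}-M_s|\le C$ for a constant $C=C(d_1,d_2,G_0)$ not depending on $s$ or $t$. Writing $h(G)=\E[\,X(t,d_1,d_2)\mid G_{s+1}=G\,]$, one has $M_{s+1}-M_s=h(G_{s+1})-\E[\,h(G_{s+1})\mid\mathcal F_s\,]$, so it suffices to bound $|h(G')-h(G'')|$ over all pairs $G',G''$ that can arise as $G_{s+1}$ from a common $G_s$; two such graphs differ only in the update performed at step $s+1$, i.e.\ in at most one added edge and at most one added vertex. To estimate $|h(G')-h(G'')|$ I would couple the two continuations from time $s+1$ to time $t$: use the same update type at each step, and conditionally on that type use a maximal coupling of the preferential-attachment choices, so that the two runs first disagree at a given step only with probability proportional to the total-variation distance of the relevant attachment distributions, which is $O(1/s')$ per unit of the current degree-discrepancy. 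The point that makes this work is the locality of $X(t,d_1,d_2)$: since $d_1$ and $d_2$ are fixed, an edge contributes to $X(t,d_1,d_2)_{G'}-X(t,d_1,d_2)_{G''}$ only while the out-degree of its source (resp.\ the in-degree of its target) is close to $d_1$ (resp.\ $d_2$), so a discrepancy that has reached some vertex stops affecting $X$ as soon as that vertex's degree moves away from the fixed threshold, and the cumulative effect of the single perturbed edge never becomes a systematic bias but only bounded fluctuations with expectation $O_{d_1,d_2}(1)$. A cleaner variant is to read the bound off the proof of Theorem 3: that proof yields a recursion for $\E X(t,d_1,d_2)$ whose dependence on the state at time $s$ enters only through a bounded list of local quantities (counts of low-degree vertices and of edges between them, plus a few moment-type sums), and changing one edge perturbs each of these by $O_{d_1,d_2}(1)$, which propagates to an $O_{d_1,d_2}(1)$ change in $\E X(t,d_1,d_2)$.

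Granting $|M_{s+1}-M_s|\le C$, Azuma--Hoeffding gives
$$
\Pr\bigl[\,|X(t,d_1,d_2)-\E X(t,d_1,d_2)|\ge\lambda\,\bigr]\le 2\exp\!\left(-\frac{\lambda^2}{2C^2(t-t_0)}\right),
$$
and taking $\lambda=\sqrt{t}\,\ln t$ makes the right-hand side $2\exp\!\left(-\frac{\ln^2 t}{2C^2}\cdot\frac{t}{t-t_0}\right)\to0$ as $t\to\infty$, which is exactly the claim of the theorem.

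The main obstacle is the bounded-difference estimate $|M_{s+1}-M_s|=O_{d_1,d_2}(1)$. The naive coupling bound, which only keeps track of the edge symmetric difference of the two continuations, is useless here, since that symmetric difference can grow like a positive power of $t/s'$; one genuinely has to use that $X(t,d_1,d_2)$ detects only vertices of the two fixed degrees $d_1$ and $d_2$ in order to see that the perturbation's net effect on $\E X(t,d_1,d_2)$ stays bounded. Once this is in hand the remainder is a one-line application of Azuma's inequality, and the $\ln t$ factor in the statement is exactly what is needed to send the tail probability to zero.
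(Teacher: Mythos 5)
Your overall skeleton is exactly the paper's: expose the edge-by-edge filtration, form the Doob martingale $M_s=\E(X(t,d_1,d_2)\mid G^{(s)})$, prove increments $O_{d_1,d_2}(1)$, and finish with Azuma--Hoeffding at $x=\sqrt t\ln t$. The final step and the reduction to bounding $|h(G')-h(G'')|$ for two graphs differing in one added edge are fine. The problem is that the one nontrivial ingredient --- the uniform bound $|M_{s+1}-M_s|=O_{d_1,d_2}(1)$, which is the entire content of the paper's proof --- is not actually established in your proposal. Your primary route (maximal coupling of the two continuations) does not work as written: the expected degree discrepancy between the coupled runs grows like a power of $t/s$, and for early $s$ the discrepancy vertices created near time $t$ have final degrees of order $1$, so a constant fraction of them can sit at the thresholds $d_1,d_2$; the ``locality'' of $X$ is therefore not by itself enough to keep $\E|X'-X''|$ bounded, and you give no mechanism that replaces this. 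Your ``cleaner variant'' is the right idea but it asserts exactly the point at issue: that a one-edge perturbation of the state at time $s$ ``propagates to an $O_{d_1,d_2}(1)$ change in $\E X$'' after $t-s$ further steps. A priori such a perturbation could accumulate over the remaining steps, so this needs an argument, not a sentence.

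The paper closes this gap as follows, and you should too. Interpret $\E(X\mid G^{(s)},v=i_k,w=j_k)$ ($k=1,2$) as the quantity $E_X$ computed in the model whose initial graph is $G^{(s)}\cup(i_k,j_k)$. Both quantities satisfy the recurrence \eqref{exrecur}, and the degree-count inputs satisfy \eqref{recur1}; hence their \emph{difference} satisfies \eqref{exrecur} with the $E_{in},E_{out}$ terms replaced by the corresponding differences, which in turn satisfy \eqref{recur1} \emph{without} the inhomogeneous source terms and therefore stay bounded by their initial values. The initial difference of the $E_X$'s is at most $2(d_1+d_2)$, since adding one edge changes $X$ by at most $d_1+d_2$. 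One then repeats the induction of Lemma \ref{exasymp} for the difference: the linear recurrence has nonnegative coefficients summing to at most $1$ (up to the contraction coming from the degree factors), and the remaining $O_{d_1,d_2}(1/T)$ cross terms are absorbed, so the difference remains $O_{d_1,d_2}(1)$ uniformly in the number of remaining steps. With that bound in hand, your concluding Azuma computation is exactly the paper's and gives the theorem.
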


The average value of the number of edges from a vertex with out-degree $d_1$
to a vertex with in-degree $d_2$ is $\frac{X(t,d_1,d_2)}{n_{out}(t,d_1)n_{in}(t,d_2)}$.
Since $n_{in}(t,d)$ and $X(t,d_1,d_2)$ are tightly concentrated around their expectations,
the main term of the ratio is given by Theorems 1 and 3. In particular,
if $\ocin+\ocout>1$, the average number of edges is proportional to $\frac{d_1d_2}{t}$,
and if $\ocin+\ocout<1$, it grows somehow between $\frac1t d_1^{\frac{1-\ocin}\cout}d_2$
and $\frac1t d_1d_2^{\frac{1-\ocout}\ocin}$
(tending to the first expression when $d_1$ is small compared to $d_2$
and to the second one when $d_1$ is large; Theorem 3 describes intermediate cases in detail).

\section{Expected number of vertices with the given degree}
\begin{proof}[Proof of Theorem 1]
Let $n_0=\#G_0$, $N=n-n_0$, $T=t-t_0$, $A_{in}=t_0+\din n_0$.
Let $E_d(T,N)=\E(n_{in}(T+t_0,d)|\#G=N+n_0)$.
Obviously,
$$
\E n_{in}(T+t_0,d)=\sum_{N=0}^{T}E_d(T,N)\Pr(\#G=N+n_0)=
\sum_{N=0}^{T}E_d(T,N)(\alpha+\gamma)^N\beta^{T-N}\binom{T}{N}.
$$
If $\beta=0$, the conditional expectation is defined only when $N=T$;
in this case, we will define $E_d(T,N)$ for $N\ne T$ later, the formula
holds for any definition.

For $x\in[0,1]$, let $\cin=\cin(x)=\frac{1-\frac\gamma{\alpha+\gamma}x}{1+\din x}\in[0,1]$ and
$\cout=\cout(x)=\frac{1-\frac\alpha{\alpha+\gamma}x}{1+\dout x}\in[0,1]$. Note that
$\ocin=\cin(\alpha+\gamma)$, $\ocout=\cout(\alpha+\gamma)$.
Let $p_{out,0}=p_{in,1}=\frac\gamma{\alpha+\gamma}$, $p_{out,1}=p_{in,0}=\frac\alpha{\alpha+\gamma}$.

\begin{lemma}\label{expect1} Let $\alpha+\beta>0$, $\gamma+\din>0$, $\varepsilon>0$. For $T\ge1$, $d\ge0$, $0\le N\le T$,
$$
E_d(T,N)=Tf_d\left(\frac NT\right)+O\left(\frac1{1+(d+\din+1)^{1-\varepsilon}N^2/T^2}\right),
$$
where
$$
f_0(x)=\frac{ax}{1+\cin\din},\qquad a=\frac{\alpha}{\alpha+\gamma},
$$
$$
f_d(x)=\frac{1-\cin}{(1+\cin\din)(1+\cin(\din+1))}\prod_{i=2}^d\frac{\cin(\din+i-1)}{1+\cin(\din+i)}\mbox{ for }d\ge1.
$$
If $\din>0$ and either $\alpha>0$ or $x<1$, then 
$$
f_d(x)=x\sum_{i=0}^1 [d\ge i]p_{in,i}\frac{\Gamma\left(i+\din+\frac1\cin\right)}{\cin\Gamma(i+\din)}
\frac{\Gamma(d+\din)}{\Gamma\left(d+1+\din+\frac1\cin\right)}.
$$
\end{lemma}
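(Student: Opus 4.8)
\emph{Setting up the recurrence.} The plan is to reduce Lemma~\ref{expect1} to a two--parameter recurrence for $E_d(T,N)$, to identify its ``continuum'' profile, and to control the discrepancy by induction on $T$. First I condition on the number of vertices one step earlier. Given that $\#G=N+n_0$ at internal time $T+1$, the last step did not add a vertex (so it was of type $(\dagger\dagger)$) with conditional probability $\frac{T+1-N}{T+1}$, leaving $\#G=N+n_0$ at time $T$; otherwise (probability $\frac{N}{T+1}$) the last step added a vertex, leaving $\#G=N-1+n_0$, and independently of the graph it is of type $(\dagger)$ with probability $a=\frac{\alpha}{\alpha+\gamma}$ and of type $(\ddagger)$ with probability $1-a$. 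Writing out the effect of each rule on $n_{in}(\cdot,d)$ --- a $(\dagger)$ or $(\dagger\dagger)$ step moves one vertex from in-degree $d-1$ to $d$, which in expectation over the preferential choice contributes $\frac{(d-1+\din)n_{in}(\cdot,d-1)-(d+\din)n_{in}(\cdot,d)}{t+\din n}$; a $(\dagger)$ step additionally creates a vertex of in-degree $0$; a $(\ddagger)$ step creates a vertex of in-degree $1$ --- and substituting $t+\din n=T+\din N+A_{in}$, one obtains a closed recurrence expressing $E_d(T+1,N)$ as an explicit linear combination of $E_d(T,N)$, $E_d(T,N-1)$, $E_{d-1}(T,N)$, $E_{d-1}(T,N-1)$ plus the inhomogeneous term $\tfrac{N}{T+1}(a[d=0]+(1-a)[d=1])$. (When $\beta=0$ only $N=T$ is meaningful; the recurrence is then taken as the definition of $E_d(T,N)$ for $N<T$, which does not affect the outer summation formula.)

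\emph{The continuum profile.} Next I substitute the ansatz $g_d(T,N):=Tf_d(N/T)$ and expand to first order in $1/T$ about $x=N/T$, using $\tfrac{N}{T+1}=x-\tfrac{x}{T}+O(T^{-2})$, $g_d(T,N-1)=g_d(T,N)-f_d'(x)+O(T^{-1})$, $\tfrac1{T+\din N+A_{in}}=\tfrac1{T(1+\din x)}+O(T^{-2})$. The terms of order $T$ cancel automatically, and the terms of order $1$, after the common $-xf_d'(x)$ cancels on both sides, reduce to the purely algebraic recursion
$$f_d(x)\bigl(1+\cin(\din+d)\bigr)=\cin(\din+d-1)f_{d-1}(x)+x\,p_{in,d},\qquad f_{-1}(x)=0,$$
with $p_{in,0}=a$, $p_{in,1}=1-a$, $p_{in,d}=0$ for $d\ge2$, and $\cin=\cin(x)$. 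For $d=0$ this gives $f_0(x)=\frac{ax}{1+\cin\din}$; for $d\ge1$ it telescopes to the stated product, since $\frac{\cin(\din+i-1)}{1+\cin(\din+i)}=\frac{\din+i-1}{\frac1\cin+\din+i}$. Rewriting the telescoped products of shifted integers through the $\Gamma$-function and using $1+\cin(\din+i)=\cin\bigl(\frac1\cin+\din+i\bigr)$ yields the $\Gamma$-form; the one identity needed to match the prefactor is $1-\cin=x\din\cin+x(1-a)$, which is immediate from the definition of $\cin(x)$. This step needs $\din>0$ (so that the $\Gamma$'s are finite) and $\cin(x)>0$; since $\cin$ is strictly decreasing on $[0,1]$ with $\cin(1)=\frac{a}{1+\din}$, the latter fails only when $\alpha=0$ and $x=1$, and in that degenerate case $\cin(1)=0$ and the product already gives $f_1(1)=1$, $f_d(1)=0$ for $d\ne1$, matching the fact that then every added vertex has in-degree exactly $1$.

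\emph{The error term.} Put $R_d(T,N)=E_d(T,N)-g_d(T,N)$. By the two previous steps $R_d$ satisfies the same linear recurrence with inhomogeneous term $F_d(T+1,N)$ equal to the local truncation error of the smooth profile; collecting the second-order Taylor remainder together with the $O(T^{-2})$ errors from the denominators gives $|F_d(T+1,N)|=O\bigl(\tfrac1T\bigl(|f_d''(x)|+|f_d'(x)|+(d+\din)|f_{d-1}(x)|\bigr)\bigr)$ uniformly. Estimating $f_d$ and its first two $x$-derivatives from the $\Gamma$-representation (standard ratio-of-$\Gamma$ asymptotics, with extra care in the regime $x\approx1$ when $\alpha=0$, where $\cin(x)$ is near $0$ but $f_d(x)$ and its $x$-derivatives are then super-polynomially small in $d$), one checks $|F_d(T+1,N)|\le\frac{C_\varepsilon}{T}W_d(T,N)$, where $W_d(T,N):=\bigl(1+(d+\din+1)^{1-\varepsilon}N^2/T^2\bigr)^{-1}$ is the target bound. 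Finally I prove $|R_d(T,N)|\le C_\varepsilon W_d(T,N)$ by induction on $T$ for all $d$ and $N$ simultaneously (the $d-1$ terms live at the same time $T$, so no separate induction on $d$ is needed); the base case $T=1$ is immediate because $E_d$, $g_d$, and $W_d$ are all $\Theta(1)$-bounded there, using that $f_d(1)$ decays polynomially in $d$. In the inductive step the total weight on the level-$d$ terms $R_d(T,N),R_d(T,N-1)$ is $1-\Theta\bigl(\frac{d+\din}{T+\din N}\bigr)$ (the $\Theta$ absorbing the factor $1-(1-a)x=\cin(1+\din x)$) and the total weight on the level-$(d-1)$ terms is $\Theta\bigl(\frac{d+\din}{T+\din N}\bigr)$, so the contraction built into the recurrence has to be weighed against the growth of $W_d$ in $T$ and against the transfer of mass from level $d-1$ up to level $d$.

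Concretely, closing the induction amounts to verifying that $W_d$ is a \emph{supersolution}: roughly $W_d(T+1,N)\ge\bigl(1-c\tfrac{d+\din}{T}\bigr)W_d(T,N-1)+c\tfrac{d+\din}{T}W_{d-1}(T,N-1)+\tfrac{C}{T}W_d(T,N)$ must hold for all $d$ and $N$, where $W_{d-1}\ge W_d$ and $W_d$ increases in $T$. I expect this supersolution check --- uniform over all regimes of $N/T$ and $d$, and in particular at the crossover $(d+\din+1)^{1-\varepsilon}N^2/T^2\asymp1$ where $W_d$ stops decaying --- to be the main obstacle; it is precisely to swallow the logarithmic factors coming from $f_d',f_d''$ and the near-harmonic sums in $d$ produced by iterating the $R_{d-1}\to R_d$ transfer that the natural exponent $1$ must be weakened to $1-\varepsilon$. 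The recurrence derivation and the closed-form algebra of the first two steps are routine by comparison.
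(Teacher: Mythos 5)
Your overall route coincides with the paper's: the same conditional recurrence \eqref{recur1}, the same identification of the profile $Tf_d(N/T)$ through the algebraic recursion \eqref{fdrecur}, and the same plan of bounding $R_d=E_d-Tf_d$ by induction on $T$ against the weight $W_d=(1+(d+\din+1)^{1-\varepsilon}N^2/T^2)^{-1}$. The problem is that the decisive step is precisely the one you leave open: you only state the supersolution inequality ``roughly'' and yourself flag its verification as the expected main obstacle. Moreover, as written the inequality is false: you give the mass transferred from level $d-1$ the same coefficient $c\frac{d+\din}T$ as the mass removed from level $d$, and since $W_{d-1}\ge W_d$ the right-hand side already exceeds $W_d(T+1,N)$ for small $N$ (where all three weights are $\approx1$), with the extra $\frac CT W_d$ on top. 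The actual mechanism is the unit mismatch between the outgoing coefficient $d+\din$ and the incoming coefficient $d-1+\din$: because the weight varies in $d$ with exponent $1-\varepsilon<1$, one has $(d+\din)C(d,x)-(d-1+\din)C(d-1,x)\ge\varepsilon C(d,x)$ (the paper's \eqref{l1.temp}), which yields a negative drift of size of order $\frac{\varepsilon C_0}{(d+1)^{1-\varepsilon}T}$, proportional to the constant $C_0$ in the inductive hypothesis, whereas the Taylor/truncation error is $O\bigl(\frac1{(d+1)^{1-\varepsilon}T}\bigr)$ with a constant independent of $C_0$; taking $C_0$ large closes the induction. This is also the real reason the exponent must be weakened to $1-\varepsilon$ (with exponent $1$ the gain in \eqref{l1.temp} degenerates to order $C(d,x)^2/C_0$ and cannot dominate the error for large $d$, no matter how large $C_0$ is), not primarily the logarithmic factors from $f_d'$, $f_d''$, which the paper absorbs separately by the $\cin+\frac\varepsilon2$ versus $\cin+\varepsilon$ device.

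Two further points your sketch does not cover. First, the inductive step uses that the coefficients multiplying the inherited errors, such as $1-\frac{d+\din}{T+\din N+A_{in}}$, are non-negative; this fails when $d$ is comparable to $T$, so the bound \eqref{lemm1main} for $d\ge T+t_0$ must be established separately by a probabilistic argument (a single vertex must have received essentially all edges, an event of probability $O\bigl(\frac1{1+N^2/T}\bigr)$ given $\#G=N+n_0$ --- this is exactly where the hypothesis $\gamma+\din>0$ enters), and only then may one assume $d\le T+t_0$ in the induction. Second, the base of the induction is not $T=1$ but all $T\le T_0$, where $T_0$ is chosen so that the quantitative inequalities needed in the step (e.g. $(d+\din+1)^{1-\varepsilon}\le\frac{\varepsilon T}{12(1+2\din+A_{in})}$) hold for $T\ge T_0$, the finitely many smaller $T$ being absorbed into $C_0$; and the case $d=0$ needs a separate remark since \eqref{l1.temp} requires $d\ge1$ (for $\din>0$ the factor $\varepsilon$ is replaced by $\din$, and for $d=\din=0$ the recursion is solved exactly, $E_0(T,N)=aN+E_0(0,0)$). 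These are repairable, but together with the unverified supersolution step the proposal does not yet constitute a proof.
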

\begin{proof} Without loss of generality assume $\varepsilon<1$.

First, derive a recurrent equation for $E_d(T,N)$. Let $\deg_{in,T}(i)$ denote
in-degree of a vertex $i$ in a random graph $G_T\in\mathcal G(T+t_0)$.
$$
E_d(T+1,N)=\sum_{i=1}^{N+n_0}\Pr(\deg_{in,T+1}(i)=d|\#G_{T+1}=N+n_0)=
\sum_{i=1}^{N+n_0}\frac{\Pr(\deg_{in,T+1}(i)=d,\#G_{T+1}=N+n_0)}{\Pr(\#G_{T+1}=N+n_0)}.
$$
The denominator is easy to calculate: $\Pr(\#G_{T+1}=N+n_0)=(\alpha+\gamma)^N\beta^{T+1-N}\binom{T+1}{N}$.

Let $G_{T+1}$ be a random graph from $\mathcal G(T+t_0+1)$ with $N+n_0$ vertices.
The numerator can be expressed as a sum $\Pr(...,\dagger)+\Pr(...,\ddagger)+\Pr(...,\dagger\dagger)$
of probabilities with additional condition that $G_{T+1}$ is constructed from $G_T\in\mathcal G(T+t_0)$
using ($\dagger$), ($\ddagger$), ($\dagger\dagger$) respectively.

\begin{itemize}
\item If $G_{T+1}$ is constructed from $G_T\in\mathcal G(T+t_0)$ using ($\dagger$), then
$N>0$, $G_T$ has $N+n_0-1$ vertices. The last vertex in $G_{T+1}$ has in-degree 0.
For $i\in G_T$,
\begin{multline*}
\Pr(\deg_{in,T+1}(i)=d,\#G_{T+1}=N+n_0,\dagger)\\=\alpha
\Pr(\deg_{in,T}(i)=d,\#G_T=N+n_0-1)\left(1-\frac{d+\din}{T+\din N+A_{in}-\din}\right)\\
+\alpha \Pr(\deg_{in,T}(i)=d-1,\#G_T=N+n_0-1)\frac{d-1+\din}{T+\din N+A_{in}-\din}.
\end{multline*}
Note that
$$
\frac{\Pr(\#G_T=N+n_0-1)}{\Pr(\#G_{T+1}=N+n_0)}=
\frac{(\alpha+\gamma)^{N-1}\beta^{T+1-N}\binom{T}{N-1}}{(\alpha+\gamma)^N\beta^{T+1-N}\binom{T+1}{N}}=
\frac{N}{(\alpha+\gamma)(T+1)}.
$$
\item If $G_{T+1}$ is constructed from $G_T\in\mathcal G(T+t_0)$ using ($\dagger\dagger$),
then $N<T+1$, $G_T$ has $N+n_0$ vertices. For each of them
\begin{multline*}
\Pr(\deg_{in,T+1}(i)=d,\#G_{T+1}=N+n_0,\dagger\dagger)\\=\beta
\Pr(\deg_{in,T}(i)=d,\#G_T=N+n_0)\left(1-\frac{d+\din}{T+\din N+A_{in}}\right)\\
+\beta \Pr(\deg_{in,T}(i)=d-1,\#G_T=N+n_0)\frac{d-1+\din}{T+\din N+A_{in}}.
\end{multline*}
Note that
$$
\frac{\Pr(\#G_T=N+n_0)}{\Pr(\#G_{T+1}=N+n_0)}=
\frac{(\alpha+\gamma)^{N}\beta^{T-N}\binom{T}{N}}{(\alpha+\gamma)^N\beta^{T+1-N}\binom{T+1}{N}}=
\frac{T+1-N}{\beta(T+1)}.
$$
\item If $G_{T+1}$ is constructed from $G_T\in\mathcal G(T+t_0)$ using ($\ddagger$),
then $N>0$, $G_T$ has $N+n_0-1$ vertices. The last vertex in $G_{T+1}$ has in-degree 1.
For $i\in G_T$,
$$\Pr(\deg_{in,T+1}(i)=d,\#G_{T+1}=N+n_0,\ddagger)=\gamma \Pr(\deg_{in,T}(i)=d,\#G_T=N+n_0-1).$$
\end{itemize}
Thus,
\begin{multline}\label{recur1}
E_d(T+1,N)=[N>0,d=0]\alpha\frac{N}{(\alpha+\gamma)(T+1)}\\
+[N>0]\alpha E_d(T,N-1)\left(1-\frac{d+\din}{T+\din N+A_{in}-\din}\right)\frac{N}{(\alpha+\gamma)(T+1)}\\
+[N>0]\alpha E_{d-1}(T,N-1)\frac{d-1+\din}{T+\din N+A_{in}-\din}\frac{N}{(\alpha+\gamma)(T+1)}\\
+[N<T+1]\beta E_d(T,N)\left(1-\frac{d+\din}{T+\din N+A_{in}}\right)\frac{T+1-N}{\beta(T+1)}\\
+[N<T+1]\beta E_{d-1}(T,N)\frac{d-1+\din}{T+\din N+A_{in}}\frac{T+1-N}{\beta(T+1)}\\
+[N>0,d=1]\gamma\frac{N}{(\alpha+\gamma)(T+1)}+[N>0]\gamma E_d(T,N-1)\frac{N}{(\alpha+\gamma)(T+1)}.
\end{multline}
Note that the right-hand side of \eqref{recur1} for $N=T+1$ depends only on $E_d(T,T)$
and not other values of $E_d(T,T)$. If $\beta=0$, define $E_d(T,N)$ for $N\ne T$ recurrently
so that \eqref{recur1} holds.

We need to establish some properties of $f_d(x)$.
A straightforward calculation shows that two definitions of $f_d(x)$ from the statement of the lemma
are indeed equivalent (when the second one is defined) and that $f_d(x)$ satisfy to the following recurrent equation:
\begin{equation}\label{fdrecur}
\left(\frac{1-x+ax}{1+\din x}(d+\din)+1\right)f_d(x)=\frac{1-x+ax}{1+\din x}(d+\din-1)f_{d-1}(x)+x\left([d=0]a+[d=1](1-a)\right),
\end{equation}
assuming $f_{-1}(x)=0$. It is obvious from the first definition of $f_d(x)$
that $f_d(x)$ is analytical for $x\in[0,1]$ and any fixed $d$.
Since
\begin{multline}\label{fdbound}
\prod_{i=2}^d\frac{\cin(\din+i-1)}{1+\cin(\din+i)}\le\prod_{i=2}^d\frac{\din+i-1}{\din+i+\frac1{\cin+\frac\varepsilon2}}
\le\prod_{i=2}^d\left(1+\frac1{\din+i+\frac1{\cin+\frac\varepsilon2}}\right)^{-1-\frac1{\cin+\frac\varepsilon2}}\\
=\left(\frac{d+\din+1+\frac1{\cin+\frac\varepsilon2}}{\din+2+\frac1{\cin+\frac\varepsilon2}}\right)^{-1-\frac1{\cin+\frac\varepsilon2}}
\le\left(\din+2+\frac2\varepsilon\right)^{1+\frac2\varepsilon}(d+\din+1)^{-1-\frac1{\cin+\frac\varepsilon2}}\\
=O\left((d+\din+1)^{-1-\frac1{\cin+\frac\varepsilon2}}\right),
\end{multline}
where the implied constant does not depend on $x$. Furthermore, for $d\ge3$ and $\cin\ne0$
we have\\ $\prod_{i=2}^d\frac{\cin(\din+i-1)}{1+\cin(\din+i)}=O\left(\cin^2(d+\din+1)^{-1-\frac1{\cin+\frac\varepsilon2}}\right)$, so
\begin{multline*}
\frac{d}{dx}\prod_{i=2}^d\frac{\cin(\din+i-1)}{1+\cin(\din+i)}=
\cin'\left(\prod_{i=2}^d\frac{\cin(\din+i-1)}{1+\cin(\din+i)}\right)\left(\sum_{i=2}^d\frac1{\cin(1+\cin(\din+i))}\right)\\
=O\left((d+\din+1)^{-1-\frac1{\cin+\frac\varepsilon2}}\sum_{i=2}^d\frac\cin{1+\cin(\din+i)}\right)
=O\left((d+\din+1)^{-1-\frac1{\cin+\frac\varepsilon2}}\sum_{i=2}^d\frac1{1+\din+i}\right)\\
=O\left((d+\din+1)^{-1-\frac1{\cin+\frac\varepsilon2}}\ln d\right)=O\left((d+\din+1)^{-1-\frac1{\cin+\varepsilon}}\right),
\end{multline*}
where the implied constant again does not depend on $x$; since $f_d'(x)$ is continuous,
we have $$f'_d(x)=O\left((d+\din+1)^{-1-\frac1{\cin+\varepsilon}}\right)$$
for any $x\in[0,1]$ (even in the point $x=1$, where
$\cin$ may be zero) and any $d$ (because obviously, $f'_0(x)=O(1)$, $f'_1(x)=O(1)$, $f'_2(x)=O(1)$).
Similarly, $$f''_d(x)=O\left((d+\din+1)^{-1-\frac1{\cin+\frac\varepsilon2}}\ln^2(d+2)\right)=O\left((d+\din+1)^{-1-\frac1{\cin+\varepsilon}}\right).$$
For this theorem, it is more convenient to use weaker bounds that do not depend on $x$:
$f_d(x)=O((d+1)^{-2+\varepsilon})$, $f'_d(x)=O((d+1)^{-2+\varepsilon})$, $f''_d(x)=O((d+1)^{-2+\varepsilon})$.
A stronger bound \eqref{fdbound} would not improve the final bound; we will need it for the next theorem, though.

The case $d=\din=0$ is special. In this case, \eqref{recur1} becomes
$$
E_0(T+1,N)=\frac{aN}{T+1}+[N>0]E_0(T,N-1)\frac{N}{T+1}+[N<T+1]E_0(T,N)\frac{T+1-N}{T+1}
$$
The solution is $E_0(T,N)=aN+E_0(0,0)$, it satisfies the condition of the lemma.

We use induction by $T$ and, for fixed $T$, by $d$
to prove the formula
\begin{equation}\label{lemm1main}
\left|E_d(T,N)-Tf_d\left(\frac NT\right)\right|\le C\left(d,\frac NT\right),\qquad 0\le N\le T,
\end{equation}
where $C(d,x)=\frac{C_0}{1+(d+\din+1)^{1-\varepsilon}x^2}$
and $C_0$ be some sufficiently large constant that will be determined later.

Select $T_0$ such that for any $T\ge T_0$ we have
$(T+t_0+\din+1)^{1-\varepsilon}\le T\frac{\varepsilon}{12(1+2\din+A_{in})}$
and $(T+t_0+\din+1)^{1-\varepsilon}\le T\frac{\din}{12(1+2\din+A_{in})}$ if $\din>0$.
There are only finitely many pairs $(T,N)$ with $T\le T_0$, so we can select $C_0$ such that
\eqref{lemm1main} holds for $T\le T_0$.

Before proceeding with induction, we need to establish \eqref{lemm1main} for $d\ge T+t_0$
and any $T\ge T_0$. In this case, $Tf_d(x)=O(Td^{-2+\varepsilon})=O(d^{-1+\varepsilon})$, so we need to prove that
$E_d(T,N)\le C\left(d,\frac NT\right)$ (and then increase $C_0$ to account for $Tf_d(x)$).
If $d>T+t_0$, it is trivial because $E_d(T,N)=0$.
There can be only one vertex with in-degree $d=T+t_0$, and it exists only
if $(\ddagger)$ has not been used after the first step and the same target vertex
was selected at every step; the probability of that given $\#G=N+n_0$ is not greater than
$\prod_{i=2}^{N}a\frac{T+t_0+\din}{T+t_0+\din i}=a^{N-1}\exp\sum_{i=2}^{N}\ln\left(1-\frac{\din(i-1)}{T+t_0+\din i}\right)
\le a^{N-1}\left(\exp\frac{\din N(N-1)}{2(T(1+\din)+t_0)}\right)^{-1}\le\frac{a^{N-1}}{1+\frac{\din N(N-1)}{2(T(1+\din)+t_0)}}$.
Since $\gamma+\din>0$, we have either $a<1$ or $\din>0$, in both cases $E_d(T,N)=O\left(\frac1{1+N^2/T}\right)$,
so \eqref{lemm1main} holds for $T\ge T_0$.

Now assume that \eqref{lemm1main} is proved for
some value of $T\ge T_0$ and consider the value $T+1$. Assume also that $d<T+1+t_0$.

Let $0\le N\le T+1$ and $x=\frac{N}{T+1}$. If $N\ne T+1$, the inductive hypothesis and the Taylor formula imply that
\begin{multline*}
E_d(T,N)=Tf_d\left(x+\frac{x}{T}\right)+\theta_1=Tf_d(x)+xf'_d(x)+\frac{x^2}{2T}f''_d(\xi)+\theta_1\\=
Tf_d(x)+xf'_d(x)+O\left(\frac{(d+1)^{-2+\varepsilon}}T\right)+\theta_1,
\end{multline*}
where $|\theta_1|\le C\left(d,x+\frac xT\right)$. Similarly
$$E_{d-1}(T,N)=Tf_{d-1}(x)+O\left((d+1)^{-2+\varepsilon}\right)+\theta_2$$
for $d\ge1$ with $|\theta_2|\le C\left(d-1,x+\frac xT\right)$. The same holds for $d=0$ with $\theta_2=0$.

If $N\ne0$, we have for the same reasons
\begin{multline*}
E_d(T,N-1)=Tf_d\left(x-\frac{1-x}T\right)+\theta_3=Tf_d(x)-(1-x)f'_d(x)+\frac{(1-x)^2}{2T}f''_d(\xi)+\theta_3\\=
Tf_d(x)-(1-x)f'_d(x)+O\left(\frac{(1-x)(d+1)^{-2+\varepsilon}}T\right)+\theta_3
\end{multline*}
and $E_{d-1}(T,N-1)=Tf_{d-1}(x)+O\left((1-x)(d+1)^{-2+\varepsilon}\right)+\theta_4$,
$|\theta_3|\le C\left(d,x-\frac{1-x}T\right)$, $\theta_4=0$ for $d=0$,
$|\theta_4|\le C\left(d-1,x-\frac{1-x}T\right)$ for $d\ge1$.

Now \eqref{recur1} becomes
\begin{multline}\label{recur2}
E_d(T+1,N)=[d=0]ax+[d=1](1-a)x\\+
x\left(Tf_d(x)-(1-x)f'_d(x)+O\left(\frac{(1-x)(d+1)^{-2+\varepsilon}}T\right)\right)\left(1-\frac{a(d+\din)}{T(1+\din x)}+O\left(\frac{a}{T}\right)\right)\\+
ax\left(Tf_{d-1}(x)+O\left((1-x)(d+1)^{-2+\varepsilon}\right)\right)\frac{d-1+\din}{T(1+\din x)}\left(1+O\left(\frac1T\right)\right)\\+
(1-x)\left(Tf_d(x)+xf'_d(x)+O\left(\frac{(d+1)^{-2+\varepsilon}}T\right)\right)\left(1-\frac{d+\din}{T(1+\din x)}+O\left(\frac{1}{T}\right)\right)\\+
(1-x)\left(Tf_{d-1}(x)+O\left((d+1)^{-2+\varepsilon}\right)\right)\frac{d-1+\din}{T(1+\din x)}\left(1+O\left(\frac1T\right)\right)\\
+x\theta_3\left(1-\frac{a(d+\din)}{T(1+\din x)+\din x+A_{in}-\din}\right)+ax\theta_4\frac{d-1+\din}{T(1+\din x)+\din x+A_{in}-\din}\\
+(1-x)\theta_1\left(1-\frac{d+\din}{T(1+\din x)+\din x+A_{in}}\right)+(1-x)\theta_2\frac{d-1+\din}{T(1+\din x)+\din x+A_{in}},
\end{multline}
we have dropped indicators $[N>0]$ and $[N<T+1]$ because everything in the right-hand side is defined at $x=0$ and $x=1$ too
and corresponding terms are zero anyway due to factors $x$ and $1-x$.

Expand \eqref{recur2}:
\begin{multline*}
E_d(T+1,N)=Tf_d(x)+[d=0]ax+[d=1](1-a)x\\-xf_d(x)\frac{a(d+\din)}{1+\din x}
+axf_{d-1}(x)\frac{d-1+\din}{1+\din x}
-(1-x)f_d(x)\frac{d+\din}{1+\din x}
+(1-x)f_{d-1}(x)\frac{d-1+\din}{1+\din x}\\
+O\left(\frac{(1-x)(d+1)^{-1+\varepsilon}}T\right)+O\left(\frac{ax(d+1)^{-1+\varepsilon}}T\right)\\
+x\theta_3\left(1-\frac{a(d+\din)}{T(1+\din x)+\din x+A_{in}-\din}\right)+ax\theta_4\frac{d-1+\din}{T(1+\din x)+\din x+A_{in}-\din}\\
+(1-x)\theta_1\left(1-\frac{d+\din}{T(1+\din x)+\din x+A_{in}}\right)+(1-x)\theta_2\frac{d-1+\din}{T(1+\din x)+\din x+A_{in}},
\end{multline*}
The sum of terms without $\theta_i$ and $O(\cdot)$ equals $(T+1)f_d(x)$ due to \eqref{fdrecur}.
Denote the sum of other terms as $\theta$. Then
$$
E_d(T+1,N)=(T+1)f_d(x)+\theta,
$$
so we need to prove that $|\theta|\le C(d,x)$.

The expression $x\left(1-\frac{a(d+\din)}{T(1+\din x)+\din x+A_{in}-\din}\right)=\frac{N}{T+1}\left(1-\frac{a(d+\din)}{T+\din N+t_0+\din n_0-\din}\right)$
is always non-negative (provided that $d\le T+t_0$). The expression $1-\frac{d+\din}{T(1+\din x)+\din x+A_{in}}=1-\frac{d+\din}{T+\din N+t_0+\din n_0}$
is also always non-negative. Therefore,
\begin{multline*}
|\theta|\le xC\left(d,x-\frac{1-x}T\right)+(1-x)C\left(d,x+\frac xT\right)\\
-\frac{ax}{T(1+\din x)+\din x+A_{in}-\din}\left((d+\din)C\left(d,x-\frac{1-x}T\right)-[d\ge1](d-1+\din)C\left(d-1,x-\frac{1-x}T\right)\right)\\
-\frac{1-x}{T(1+\din x)+\din x+A_{in}}\left((d+\din)C\left(d,x+\frac xT\right)-[d\ge1](d-1+\din)C\left(d-1,x+\frac xT\right)\right)\\
+O\left(\frac{1-x+ax}{(d+1)^{1-\varepsilon}T}\right).
\end{multline*}

We have $|C''(d,x)|=C_0\left|-\frac{2(d+\din+1)^{1-\varepsilon}}{(1+(d+\din+1)^{1-\varepsilon}x^2)^2}+\frac{8(d+\din+1)^{2-2\varepsilon}x^2}{(1+(d+\din+1)^{1-\varepsilon}x^2)^3}\right|
\le C_0\frac{6(d+\din+1)^{1-\varepsilon}}{(1+(d+\din+1)^{1-\varepsilon}x^2)^2}\le 6(d+\din+1)^{1-\varepsilon}C(d,x)$. Using the Taylor formula, we obtain
\begin{multline*}\left|xC\left(d,x-\frac{1-x}T\right)+(1-x)C\left(d,x+\frac xT\right)-C(d,x)\right|=
\left|\frac{x(1-x)^2}{2T^2}C''(d,\xi_1)+\frac{(1-x)x^2}{2T^2}C''(d,\xi_2)\right|\\
\le\frac{3x(1-x)(d+\din+1)^{1-\varepsilon}C\left(d,x-\frac{1-x}T\right)}{T^2}.\end{multline*}

If $d\ge1$, then
\begin{multline}\label{l1.temp}(d+\din)C(d,x)-(d-1+\din)C(d-1,x)=C_0\int_{d-1}^d\left(\frac{z+\din}{1+(z+\din+1)^{1-\varepsilon}x^2}\right)'_zdz\\
=C_0\int_{d-1}^d\frac{1+\varepsilon(z+\din+1)^{1-\varepsilon}x^2+(1-\varepsilon)(z+\din+1)^{-\varepsilon}x^2}{(1+(z+\din+1)^{1-\varepsilon}x^2)^2}dz
\ge C_0\frac\varepsilon{1+(d+\din+1)^{1-\varepsilon}x^2}=\varepsilon C(d,x).
\end{multline}
Thus, for $d\ge1$
\begin{multline*}
|\theta|\le C(d,x)+\frac{3x(1-x)(d+\din+1)^{1-\varepsilon}C\left(d,x-\frac{1-x}T\right)}{T^2}\\
-\frac{ax\varepsilon C\left(d,x-\frac{1-x}T\right)}{T(1+\din x)+\din x+A_{in}-\din}
-\frac{(1-x)\varepsilon C\left(d,x+\frac xT\right)}{T(1+\din x)+\din x+A_{in}}
+O\left(\frac{1-x+ax}{(d+1)^{1-\varepsilon}T}\right).
\end{multline*}
Since $T\ge T_0$ and $d\le T+t_0$, we have $(d+\din+1)^{1-\varepsilon}\le T\frac\varepsilon{12}$,
$$
\frac{C\left(d,x-\frac{1-x}T\right)}{C\left(d,x+\frac xT\right)}=
\frac{1+(d+\din+1)^{1-\varepsilon}\left(x+\frac xT\right)^2}{1+(d+\din+1)^{1-\varepsilon}\left(x-\frac{1-x}T\right)^2}
\le 1+(d+\din+1)^{1-\varepsilon}\frac{2\left(x+\frac xT\right)}T\le 2,
$$
so $\frac{3x(1-x)(d+\din+1)^{1-\varepsilon}C\left(d,x-\frac{1-x}T\right)}{T^2}\le\frac{\varepsilon(1-x)C\left(d,x+\frac xT\right)}{2T(1+2\din+A_{in})}
\le\frac{\varepsilon(1-x)C\left(d,x+\frac xT\right)}{2(T(1+\din x)+\din x+A_{in})}$. Since $C(d,x)\ge\frac{C_0}{2(d+\din+1)^{1-\varepsilon}}$,
$$
|\theta|\le C(d,x)-\frac{(1-x+ax)\varepsilon C_0}{4(d+\din+1)^{1-\varepsilon}(T(1+\din x)+\din x+A_{in})}
+O\left(\frac{1-x+ax}{(d+1)^{1-\varepsilon}T}\right).
$$
Take $C_0$ such that $\frac{(1-x+ax)\varepsilon C_0}{4(d+\din+1)^{1-\varepsilon}(T(1+\din x)+\din x+A_{in})}\ge
O\left(\frac{1-x+ax}{(d+1)^{1-\varepsilon}T}\right)$, then \eqref{lemm1main} follows.

If $d=0$ and $\din=0$, \eqref{lemm1main} was already proved. The case $d=0$ and $\din>0$
is similar to the case $d\ge1$ with the factor $\varepsilon$ from \eqref{l1.temp} replaced with $\din$.

\end{proof}


\begin{lemma}\label{mainbinom} Let $n$ be a random variable that takes the value $N$ with
probability $\Pr(n=N)=\Pr(\#G_T=N+n_0)$.
Let $f(x)\in C^2[0,1]$. Then
$$
\E f\left(\frac nT\right)=f(\alpha+\gamma)+O\left(\frac{\max_{0\le x\le 1}|f''(x)|}T\right).
$$
\end{lemma}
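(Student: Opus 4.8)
The plan is to exploit that $n-n_0$ is a sum $T$ i.i.d. Bernoulli trials: at each of the $T$ steps a new vertex is created with probability $\alpha+\gamma$ (processes $\dagger$ or $\ddagger$) and no vertex is created with probability $\beta$ (process $\dagger\dagger$), all independently. Hence $n-n_0\sim\mathrm{Bin}(T,\alpha+\gamma)$, so $\E n=n_0+(\alpha+\gamma)T$ and $\mathrm{Var}\,n=(\alpha+\gamma)\beta T\le T/4$. In particular $\E(n/T)=(\alpha+\gamma)+n_0/T$, which differs from $\alpha+\gamma$ by $O(1/T)$, and this discrepancy is harmless since $f'$ is bounded on $[0,1]$ (by the mean value theorem, $|f(\alpha+\gamma+n_0/T)-f(\alpha+\gamma)|\le \frac{n_0}{T}\max|f'|$, and $\max|f'|$ is controlled by $|f(0)|+\max|f''|$, or one can simply absorb it since $n_0$ is a fixed constant of the model).

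The main step is a second-order Taylor expansion of $f$ around the point $\mu/T$ where $\mu=\E n=n_0+(\alpha+\gamma)T$. Write, for each realization of $n$,
$$
f\!\left(\frac nT\right)=f\!\left(\frac\mu T\right)+f'\!\left(\frac\mu T\right)\frac{n-\mu}{T}+\frac12 f''(\xi)\left(\frac{n-\mu}{T}\right)^2,
$$
with $\xi$ between $n/T$ and $\mu/T$, hence $\xi\in[0,1]$. Taking expectations, the linear term vanishes because $\E(n-\mu)=0$, and the quadratic term is bounded in absolute value by
$$
\frac12\max_{0\le x\le1}|f''(x)|\cdot\frac{\E(n-\mu)^2}{T^2}=\frac12\max_{0\le x\le1}|f''(x)|\cdot\frac{\mathrm{Var}\,n}{T^2}\le\frac{\max_{0\le x\le1}|f''(x)|}{8T}.
$$
This gives $\E f(n/T)=f(\mu/T)+O(\max|f''|/T)$, and then replacing $f(\mu/T)$ by $f(\alpha+\gamma)$ costs another $O(1/T)$ as noted above; collecting terms yields the claim. (If one wants the error term stated purely in terms of $\max|f''|$ without an additive $O(1/T)$, one can instead Taylor-expand around $\alpha+\gamma$ directly: the linear term then contributes $f'(\alpha+\gamma)\,\E(n/T-(\alpha+\gamma))=f'(\alpha+\gamma)\,n_0/T$, and since $n_0$ is a fixed constant this is $O(|f'(\alpha+\gamma)|/T)=O((|f(0)|+\max|f''|)/T)$; but the cleanest reading is simply that for fixed model parameters the $n_0/T$ term is negligible.)

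There is essentially no serious obstacle here: the only mild subtlety is to make sure the intermediate point $\xi$ in the Lagrange remainder stays inside $[0,1]$ so that $\max_{[0,1]}|f''|$ legitimately bounds $|f''(\xi)|$ — this holds because both $n/T$ and $\mu/T$ lie in $[0,1]$ (indeed $0\le n-n_0\le T$ and $0\le(\alpha+\gamma)T\le T$, while $n_0/T\to 0$, so for $T$ large both points are in $[0,1]$; for the finitely many small $T$ one adjusts the implied constant). The key inputs are the binomial identity for $\#G_T$ — already recorded in the excerpt as $\Pr(\#G_T=N+n_0)=(\alpha+\gamma)^N\beta^{T-N}\binom TN$ — and the elementary variance bound $p(1-p)\le 1/4$.
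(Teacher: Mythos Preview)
Your approach is the same as the paper's --- second-order Taylor expansion of $f$ about the mean of $n/T$, with the quadratic remainder controlled by the binomial variance --- but you have misread the definition of $n$. The lemma sets $\Pr(n=N)=\Pr(\#G_T=N+n_0)=(\alpha+\gamma)^N\beta^{T-N}\binom TN$, so $n$ itself (not $n-n_0$) is $\mathrm{Bin}(T,\alpha+\gamma)$; thus $\E n=(\alpha+\gamma)T$ exactly, with no $n_0$ offset. This matters: under your reading the linear term in the Taylor expansion about $\alpha+\gamma$ contributes $f'(\alpha+\gamma)\,n_0/T$, which is \emph{not} $O(\max|f''|/T)$ --- take $f(x)=x$, where $\max|f''|=0$ but your error is $n_0/T>0$, contradicting the stated bound. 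Your attempted repairs (bounding $|f'|$ by $|f(0)|+\max|f''|$, or absorbing $n_0/T$ as ``negligible'') do not recover the error term as written.

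Once the reading is corrected, the spurious $n_0/T$ term vanishes, $\mu/T=\alpha+\gamma\in[0,1]$ on the nose (so your worry about $\xi\in[0,1]$ also evaporates), and your argument collapses exactly to the paper's: expand about $\alpha+\gamma$, the linear term has mean zero, and the quadratic remainder is at most $\tfrac12\max|f''|\cdot\E(n/T-(\alpha+\gamma))^2=\tfrac{(\alpha+\gamma)\beta}{2T}\max|f''|$.
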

\begin{proof}
Obviously, $\E1=1$ and $\E n=(\alpha+\gamma)T$. Furthermore,
$$
\E(n(n-1))=\sum_{n=0}^T n(n-1)\binom Tn(\alpha+\gamma)^n\beta^{T-n}=\sum_{n=2}^T T(T-1)\binom T{n-2}(\alpha+\gamma)^n\beta^{T-n}=
T(T-1)(\alpha+\gamma)^2.
$$
Hence,
$$
\E\left(\frac nT\right)^2=(\alpha+\gamma)^2\left(1-\frac1T\right)+\frac{\alpha+\gamma}T.
$$
For any $x\in[0,1]$, we have $f(x)=f(\alpha+\gamma)+(x-\alpha-\gamma)f'(\alpha+\gamma)+\frac{(x-\alpha-\gamma)^2}2f''(\xi)$,
where $\xi$ is some point between $x$ and $\alpha+\gamma$. Therefore,
$$
\left|\E f\left(\frac nT\right)-f(\alpha+\gamma)\right|=
\left|\E\left(\frac12\left(\frac nT-\alpha-\gamma\right)^2f''(\xi)\right)\right|\le\frac{(\alpha+\gamma)-(\alpha+\gamma)^2}{2T}\max_{0\le x\le1}|f''(x)|.
$$
\end{proof}

Now theorem in the case $\gamma=0$ follows from Lemma \ref{expect1} and Lemma \ref{mainbinom}. However,
in the case $\gamma>0$ we need slightly more subtle approach to estimate the remainder term.
Azuma--Hoeffding inequality implies that (in terms of Lemma \ref{mainbinom}) $\Pr(n<(\alpha+\gamma-\lambda)T)\le\exp(-T\lambda^2/2)$
for $\lambda>0$, so $\E\frac1{1+(d+\din+1)^{1-\varepsilon}n^2/T^2}=\E\frac{\left[\frac nT\le\frac{\alpha+\gamma}2\right]}{1+(d+\din+1)^{1-\varepsilon}n^2/T^2}
+\E\frac{\left[\frac nT\ge\frac{\alpha+\gamma}2\right]}{1+(d+\din+1)^{1-\varepsilon}n^2/T^2}\le\exp\left(-T\frac{(\alpha+\gamma)^2}8\right)+
\frac1{1+(d+\din+1)^{1-\varepsilon}\frac{(\alpha+\gamma)^2}4}=O(d^{-1+\varepsilon})$ if $d\le T+t_0$. For $d>T+t_0$ the theorem holds
due to $n_{in}=0$.
\end{proof}
\section{Concentration for number of vertices}
Let $D_{d_1,d_2}(T,N)=\E\left(n_{in}(T+t_0,d_1)n_{in}(T+t_0,d_2)|\#G=N+n_0\right)-
[d_1=d_2]E_{d_1}(T,N)$.
\begin{multline*}
D_{d_1,d_2}(T,N)=\sum_{i,j=1}^{N+n_0}\Pr(\deg_{in,T}(i)=d_1,\deg_{in,T}(j)=d_2|\#G_T=N+n_0)\\-[d_1=d_2]E_{d_1}(T,N)
=\sum_{\substack{i,j=1\\i\ne j}}^{N+n_0}\Pr(\deg_{in,T}(i)=d_1,\deg_{in,T}(j)=d_2|\#G_T=N+n_0).
\end{multline*}
\begin{lemma}\label{dbound}
If $\gamma+\din>0$, then
\begin{multline*}D_{d_1,d_2}(T,N)=T^2f_{d_1}\left(\frac NT\right)f_{d_2}\left(\frac NT\right)+O\Bigg(
\frac{T(d_1+\din+1)^{-1-\frac1{\cin(N/T)+\varepsilon}}}{1+(d_2+\din+1)^{1-\varepsilon}(N/T)^2}+
\frac{T(d_2+\din+1)^{-1-\frac1{\cin(N/T)+\varepsilon}}}{1+(d_1+\din+1)^{1-\varepsilon}(N/T)^2}\\
+\frac1{(1+(d_1+\din+1)^{1-\varepsilon}(N/T)^2)(1+(d_2+\din+1)^{1-\varepsilon}(N/T)^2)}
\Bigg).\end{multline*}
\end{lemma}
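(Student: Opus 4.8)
The plan is to imitate the proof of Lemma~\ref{expect1}, now tracking ordered pairs of distinct vertices. First I would derive a recurrence for $D_{d_1,d_2}(T+1,N)$ in the style of \eqref{recur1}. Conditioning on $\#G_{T+1}=N+n_0$ and on which of $(\dagger)$, $(\dagger\dagger)$, $(\ddagger)$ produced the last edge, I use two structural facts: a single step changes the in-degree of at most one existing vertex, and a newly created vertex (if any) receives the last index $N+n_0$. Splitting $\sum_{i\ne j}$ according to the process and to whether $i$ or $j$ is the new vertex, one obtains that $D_{d_1,d_2}(T+1,N)$ is a linear combination — with exactly the transition weights $\frac{N}{(\alpha+\gamma)(T+1)}$, $\frac{T+1-N}{\beta(T+1)}$ and the attachment factors $\frac{d+\din}{T+\din N+A_{in}}$, $\frac{d+\din}{T+\din N+A_{in}-\din}$ of \eqref{recur1} — of $D_{d_1,d_2}$, $D_{d_1-1,d_2}$, $D_{d_1,d_2-1}$ at time $T$, plus ``source'' terms in which one member of the pair is the new vertex, so that its in-degree is forced to $0$ (from $(\dagger)$) or to $1$ (from $(\ddagger)$); those contribute $[d_1=0]$- or $[d_1=1]$-multiples of $E_{d_2}$, and symmetrically $[d_2=0]$- or $[d_2=1]$-multiples of $E_{d_1}$. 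Note that $D_{d_1-1,d_2-1}$ never appears, because one new edge cannot raise two in-degrees; and the diagonal pieces $[d_1=d_2]E_{d_1}$ arising from $\sum_{i,j}=\sum_{i\ne j}+\sum_{i=j}$ are precisely what the definition of $D$ subtracts off.

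Next I would check that $T^2f_{d_1}(x)f_{d_2}(x)$, with $x=N/T$, approximately satisfies this recurrence. Writing $u(x)$ for the common factor $\frac{1-x+ax}{1+\din x}=\cin(x)$ of \eqref{fdrecur}, the contributions coming from $D_{d_1-1,d_2}$ and $D_{d_1,d_2-1}$, after applying \eqref{fdrecur} to each factor, combine to $2Tf_{d_1}f_{d_2}-Ts_{d_1}f_{d_2}-Tf_{d_1}s_{d_2}$, where $s_d(x)=x([d=0]a+[d=1](1-a))$ is the source term of \eqref{fdrecur}; the source $E$-terms, whose leading behaviour is $Tf_{d_2}s_{d_1}+Tf_{d_1}s_{d_2}$ by Lemma~\ref{expect1} (because $f_0$ and $f_1$ are exactly the coefficients of $s_d$ in \eqref{fdrecur}), supply the missing amount. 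Hence $T^2f_{d_1}f_{d_2}$ reproduces $(T+1)^2f_{d_1}f_{d_2}$ up to an $O_{d_1,d_2}(1)$ error together with the Taylor-in-$x$ corrections from the shifts $x\mapsto x\pm\tfrac{\cdot}{T}$, exactly as in the proof of Lemma~\ref{expect1}.

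Then I would run a double induction, on $T\ge T_0$ and, for fixed $T$, on $d_1+d_2$ (so that $D_{d_1-1,d_2}$, $D_{d_1,d_2-1}$ are already controlled), with the inductive bound
\[
\left|D_{d_1,d_2}(T,N)-T^2f_{d_1}(N/T)f_{d_2}(N/T)\right|\le C_0\bigl(Tf_{d_1}(x)e_{d_2}(x)+Te_{d_1}(x)f_{d_2}(x)+e_{d_1}(x)e_{d_2}(x)\bigr),\qquad x=\tfrac NT,
\]
where $e_d(x)=\frac1{1+(d+\din+1)^{1-\varepsilon}x^2}$; the stated bound then follows by \eqref{fdbound}. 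In the inductive step I substitute the Taylor expansions of $D_{d_1,d_2}(T,N)$, $D_{d_1,d_2}(T,N-1)$, $D_{d_1-1,d_2}(T,\cdot)$, $D_{d_1,d_2-1}(T,\cdot)$ into the recurrence: the $f$-factors of the bound reproduce themselves via \eqref{fdrecur} just as the main term does, the $e$-factors contract via $(d+\din)e_d(x)-(d-1+\din)e_{d-1}(x)\ge\varepsilon e_d(x)$ (the analogue of \eqref{l1.temp}), with Taylor errors controlled through $|e_d''(x)|\le 6(d+\din+1)^{1-\varepsilon}e_d(x)$, and the remaining discrepancies — the $O_{d_1,d_2}(1)$ from the main term, the Taylor errors, and the $O(e_{d_2})$ (resp. $O(e_{d_1})$) by which the source $E$-terms deviate from $Tf_{d_2}s_{d_1}$ (resp. $Tf_{d_1}s_{d_2}$), by Lemma~\ref{expect1} — are all dominated, once $C_0$ is taken large, by the strictly negative drift $-C_0\varepsilon(\cdot)$ produced above and by the slack $-C_0s_{d_1}e_{d_2}$ that the source term of \eqref{fdrecur} contributes, which is nonzero in exactly the cases where the source $E$-terms are. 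Finally I dispose of the boundary cases as in Lemma~\ref{expect1}: $N=0$ and $N=T+1$, where the $x$- and $(1-x)$-prefixed terms vanish (and $D$ is extended by the recurrence when $\beta=0$); $\max(d_1,d_2)\ge T+t_0$, handled directly since a vertex of in-degree $T+t_0$ is exponentially unlikely; and $\din=0$, where $\gamma+\din>0$ forces $\gamma>0$ or $a<1$ and the factor $\varepsilon$ in the drift is replaced by $\din$ or by a constant coming from $a<1$.

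The main obstacle is the bookkeeping: enumerating all the terms of the two-vertex recurrence correctly — especially the new-vertex source terms and how they interact with the definition of $D$ — and choosing a bound function whose three pieces each regenerate with room to spare. Within that, the subtle point is that the ``main $\times$ error'' and ``error $\times$ error'' pieces are driven by different mechanisms (\eqref{fdrecur} for the $f$-factors, the contraction \eqref{l1.temp} for the $e$-factors), and that the source-term errors, though of size only $O(e_{d_2})$ rather than $O(Te_{d_2})$, must nonetheless be absorbed into the $Tf_{d_1}e_{d_2}$ piece by enlarging $C_0$, which is possible because the slack $C_0s_{d_1}e_{d_2}$ is of the same order and is available exactly when $d_1\in\{0,1\}$, i.e. exactly when those source terms occur.
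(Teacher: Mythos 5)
Your overall architecture is the same as the paper's: the pair recurrence you describe is exactly \eqref{drecur} (including the absence of a $D_{d_1-1,d_2-1}$ term and the $[d_i=0]$/$[d_i=1]$ source terms with $E_{d_{3-i}}$), the main term $T^2f_{d_1}f_{d_2}$ is verified against it via \eqref{fdrecur} exactly as in the paper, and the double induction with a three-piece bound (two pieces of size $T$, one of size $1$), the contraction \eqref{l1.temp} for the $e$-factors, and the boundary cases are all as in the paper's proof of Lemma \ref{dbound}.

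However, there is a genuine gap in the choice of the inductive bound: you take the $T$-pieces to be $C_0\,Tf_{d_1}(x)e_{d_2}(x)$ and $C_0\,Te_{d_1}(x)f_{d_2}(x)$ with $f_d$ itself, whereas the paper uses $\hat f_d(x)=\prod_{i=2}^d\frac{i+\din-1}{i+\din+\frac1{\cin+\varepsilon}}\asymp(d+\din+1)^{-1-\frac1{\cin+\varepsilon}}$ in place of $f_d$, and this difference is what makes the induction close. For $d_1\ge2$ the exact identity $(d_1+\din)f_{d_1}-(d_1+\din-1)f_{d_1-1}=-\frac1\cin f_{d_1}$ (from \eqref{fdrecur} with zero source) means the $f$-factor reproduces its coefficient with loss factor exactly $\frac1\cin$, which after multiplication by the transition weight $\approx\cin/T$ gives an increase of exactly $+1$ per step --- no slack; your only remaining slack for the mixed pieces is the $\varepsilon$-gain from the $e$-contraction, which enters multiplied by $\cin(x)$ and hence degenerates when $\cin(N/T)$ is small (the hypothesis is only $\gamma+\din>0$, so $\alpha=0$ and $N/T$ near $1$ are allowed), and your claimed source-term slack $s_{d_1}e_{d_2}$ exists only for $d_1\in\{0,1\}$. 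Worse, the slack you do have lives at the scale $f_{d_1}e_{d_2}\lesssim(d_1+\din+1)^{-1-\frac1\cin}e_{d_2}$, while the unavoidable per-step errors --- the Taylor remainders of $f_{d_1}$, whose uniform derivative bounds are only $O\bigl((d_1+\din+1)^{-1-\frac1{\cin+\varepsilon}}\bigr)$ because the logarithmic and small-$\cin$ losses must be absorbed into the exponent, together with the Lemma \ref{expect1} remainders of the source terms --- are of the larger scale $(d_1+\din+1)^{-1-\frac1{\cin+\varepsilon}}e_{d_2}$. The ratio of error to available slack grows like a positive power of $d_1$, so no fixed $C_0$ can absorb it uniformly, and enlarging $C_0$ does not help. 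The paper's fattened $\hat f_d$ fixes both problems at once: its ratio recursion has loss factor $\frac{\cin}{\cin+\varepsilon}$, giving a uniform slack of order $\varepsilon$ per step (cf.\ the coefficient bound $T+1-\frac\varepsilon6$ in the paper) at exactly the scale $(d+\din+1)^{-1-\frac1{\cin+\varepsilon}}$ of the error terms, which is also precisely the error scale claimed in the lemma. So you should replace $f_d$ by such a slightly weakened majorant in your bound (and keep the indicator $[d_1\ge1,d_2\ge1]$ on the $e_{d_1}e_{d_2}$ piece, as the paper does, so the contraction argument is valid at $d_i=1$); with $f_d$ itself the induction as you describe it does not close.
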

\begin{proof}
The recurrent equation for $D_{d_1,d_2}(T,N)$ is obtained similarly to \eqref{recur1}: for $i,j\in G_T$, $i\ne j$,
\begin{multline*}
\Pr(\deg_{in,T+1}(i)=d_1,\deg_{in,T+1}(j)=d_2,\#G_{T+1}=N+n_0,\dagger)\\=
\alpha \Pr(\deg_{in,T}(i)=d_1,\deg_{in,T}(j)=d_2,\#G_T=N+n_0-1)\left(1-\frac{d_1+d_2+2\din}{T+\din N+A_{in}-\din}\right)\\
+\alpha \Pr(\deg_{in,T}(i)=d_1-1,\deg_{in,T}(j)=d_2,\#G_T=N+n_0-1)\frac{d_1-1+\din}{T+\din N+A_{in}-\din}\\
+\alpha \Pr(\deg_{in,T}(i)=d_1,\deg_{in,T}(j)=d_2-1,\#G_T=N+n_0-1)\frac{d_2-1+\din}{T+\din N+A_{in}-\din},
\end{multline*}
\begin{multline*}
\Pr(\deg_{in,T+1}(i)=d_1,\deg_{in,T+1}(j)=d_2,\#G_{T+1}=N+n_0,\dagger\dagger)\\=
\beta \Pr(\deg_{in,T}(i)=d_1,\deg_{in,T}(j)=d_2,\#G_T=N+n_0)\left(1-\frac{d_1+d_2+2\din}{T+\din N+A_{in}}\right)\\
+\beta \Pr(\deg_{in,T}(i)=d_1-1,\deg_{in,T}(j)=d_2,\#G_T=N+n_0)\frac{d_1-1+\din}{T+\din N+A_{in}}\\
+\beta \Pr(\deg_{in,T}(i)=d_1,\deg_{in,T}(j)=d_2-1,\#G_T=N+n_0)\frac{d_2-1+\din}{T+\din N+A_{in}},
\end{multline*}
\begin{multline*}
\Pr(\deg_{in,T+1}(i)=d_1,\deg_{in,T+1}(j)=d_2,\#G_{T+1}=N+n_0,\ddagger)\\=
\gamma \Pr(\deg_{in,T}(i)=d_1,\deg_{in,T+1}(j)=d_2,\#G_T=N+n_0-1);
\end{multline*}
\begin{multline}\label{drecur}
D_{d_1,d_2}(T+1,N)\\=
[N>0,d_1=0]\alpha E_{d_2}(T,N-1)\frac{N}{(\alpha+\gamma)(T+1)}+[N>0,d_2=0]\alpha E_{d_1}(T,N-1)\frac{N}{(\alpha+\gamma)(T+1)}\\
+[N>0]\alpha D_{d_1,d_2}(T,N-1)\left(1-\frac{d_1+d_2+2\din}{T+\din N+A_{in}-\din}\right)\frac{N}{(\alpha+\gamma)(T+1)}\\
+[N>0]\alpha D_{d_1-1,d_2}(T,N-1)\frac{d_1-1+\din}{T+\din N+A_{in}-\din}\frac{N}{(\alpha+\gamma)(T+1)}\\
+[N>0]\alpha D_{d_1,d_2-1}(T,N-1)\frac{d_2-1+\din}{T+\din N+A_{in}-\din}\frac{N}{(\alpha+\gamma)(T+1)}\\
+[N<T+1]\beta D_{d_1,d_2}(T,N)\left(1-\frac{d_1+d_2+2\din}{T+\din N+A_{in}}\right)\frac{T+1-N}{\beta(T+1)}\\
+[N<T+1]\beta D_{d_1-1,d_2}(T,N)\frac{d_1-1+\din}{T+\din N+A_{in}}\frac{T+1-N}{\beta(T+1)}\\
+[N<T+1]\beta D_{d_1,d_2-1}(T,N)\frac{d_2-1+\din}{T+\din N+A_{in}}\frac{T+1-N}{\beta(T+1)}\\
+[N>0,d_1=1]\gamma E_{d_2}(T,N-1)\frac{N}{(\alpha+\gamma)(T+1)}+[N>0,d_2=1]\gamma E_{d_1}(T,N-1)\frac{N}{(\alpha+\gamma)(T+1)}\\
+[N>0]\gamma D_{d_1,d_2}(T,N-1)\frac{N}{(\alpha+\gamma)(T+1)}.
\end{multline}

We use induction by $T$ and, for fixed $T$, by $d_1+d_2$ to prove the formula
\begin{equation}\label{conmain}
\left|D_{d_1,d_2}(T,N)-T^2f_{d_1}\left(\frac NT\right)f_{d_2}\left(\frac NT\right)\right|\le C\left(T,d_1,d_2,\frac NT\right),
\end{equation}
$C(T,d_1,d_2,x)=TC_1(d_1,d_2,x)+TC_2(d_1,d_2,x)+C_3(d_1,d_2,x)$,
\begin{eqnarray*}
C_1(d_1,d_2,x)&=&C_0\frac{\hat f_{d_1}(x)}{1+(d_2+\din+1)^{1-\varepsilon}x^2},\\
C_2(d_1,d_2,x)&=&C_0\frac{\hat f_{d_2}(x)}{1+(d_1+\din+1)^{1-\varepsilon}x^2},\\
C_3(d_1,d_2,x)&=&\frac{C_0[d_1\ge1,d_2\ge1]}{\left(1+(d_1+\din+1)^{1-\varepsilon}x^2\right)\left(1+(d_2+\din+1)^{1-\varepsilon}x^2\right)},\\
\hat f_0(x)&=&1,\\
\hat f_d(x)&=&\prod_{i=2}^d\frac{i+\din-1}{i+\din+\frac1{\cin+\varepsilon}}\mbox{ for }d\ge1.
\end{eqnarray*}
Since $\frac{i+\din-1}{i+\din+\frac1{\cin+\varepsilon}}=1-\left(1+\frac1{\cin+\varepsilon}\right)\frac1{i+\din+\frac1{\cin+\varepsilon}}
\le\left(1+\frac1{i+\din+\frac1{\cin+\varepsilon}}\right)^{-1-\frac1{\cin+\varepsilon}}$ and\\
$\frac{i+\din-1}{i+\din+\frac1{\cin+\varepsilon}}=\left(1+\left(1+\frac1{\cin+\varepsilon}\right)\frac1{i+\din-1}\right)^{-1}
\ge\left(1+\frac1{i+\din-1}\right)^{-1-\frac1{\cin+\varepsilon}}$, we have
$$(1+\din)^{1+\frac1{1+\varepsilon}}(d+\din)^{-1-\frac1{\cin+\varepsilon}}\le\hat f_d(x)
\le\left(2+\din+\frac1{\varepsilon}\right)^{1+\frac1{\varepsilon}}\left(d+\din+1+\frac1{1+\varepsilon}\right)^{-1-\frac1{\cin+\varepsilon}},$$
so $\hat f_d(x)=\Theta\left((d+\din+1)^{-1-\frac1{\cin+\varepsilon}}\right)$, where both implied constants depend only on $\varepsilon$
and parameters of the model. For same reasons as for $f_d(x)$, we have $\frac{\hat f'_d(x)}{\hat f_d(x)}=O\left(\ln(d+\din+1)\right)$
and $\frac{\hat f''_d(x)}{\hat f_d(x)}=O\left(\ln^2(d+\din+1)\right)$.

Since $\left|\frac d{dx}\left(\frac1{1+\lambda x^2}\right)\right|=\frac{2\lambda x}{(1+\lambda x^2)^2}
\le\frac{\lambda x}{\sqrt{\lambda x^2}(1+\lambda x^2)}=\sqrt\lambda\frac1{1+\lambda x^2}$
and $\left|\frac{d^2}{dx^2}\left(\frac1{1+\lambda x^2}\right)\right|\le 6\lambda\frac1{1+\lambda x^2}$,
we have\\ $\frac{C_i'(d_1,d_2,x)}{C_i(d_1,d_2,x)}=O\left((d_1+\din+1)^{\frac{1-\varepsilon}2}+(d_2+\din+1)^{\frac{1-\varepsilon}2}\right)$
and\\
$\frac{C_i''(d_1,d_2,x)}{C_i(d_1,d_2,x)}=O\left((d_1+\din+1)^{1-\varepsilon}+(d_2+\din+1)^{1-\varepsilon}\right)$.
If $\xi\in\left[x-\frac{1-x}T,x+\frac xT\right]$ and $d\le T+t_0$, we have
\begin{multline*}
\hat f_d'(\xi)=O\left((d+1+\din)^{-1-\frac1{\cin(\xi)+\varepsilon}}\ln(d+1+\din)\right)=O\left((d+1+\din)^{-1-\frac1{\cin(x)+\varepsilon}+O\left(\frac1T\right)}\ln(d+1+\din)\right)\\
=O\left(\hat f_d(x)\ln(d+1+\din)e^{O\left(\frac{\ln(d+1+\din)}T\right)}\right)=O\left(\hat f_d(x)\ln(d+1+\din)\right),
\end{multline*}
$\max\left\{\left|\hat f_d\left(x-\frac{1-x}T\right)-\hat f_d(x)\right|,\left|\hat f_d\left(x+\frac{x}T\right)-\hat f_d(x)\right|\right\}
\le\frac1T\max_{\xi\in\left[x-\frac{1-x}T,x+\frac xT\right]}\left|\hat f'_d(\xi)\right|=\hat f_d(x)O\left(\frac{\ln(d+1+\din)}T\right)$,\\
$\hat f_d\left(x-\frac{1-x}T\right)=\hat f_d(x)\left(1+O\left(\frac{\ln(d+1+\din)}T\right)\right)$
and $\hat f_d\left(x+\frac{x}T\right)=\hat f_d(x)\left(1+O\left(\frac{\ln(d+1+\din)}T\right)\right)$.
Moreover,\\ $\left|\frac{1+\lambda x^2}{1+\lambda\xi^2}-1\right|\le\frac{2\lambda|\xi-x|}{1+\lambda\xi^2}\le\sqrt\lambda|\xi-x|$,
so $C_i\left(d_1,d_2,x+\frac{x}T\right)=C_i(d_1,d_2,x)\left(1+O\left(\frac{(d_1+1+\din)^{\frac{1-\varepsilon}2}+(d_2+1+\din)^{\frac{1-\varepsilon}2}}T\right)\right)$
and $C_i\left(d_1,d_2,x-\frac{1-x}T\right)=C_i(d_1,d_2,x)\left(1+O\left(\frac{(d_1+1+\din)^{\frac{1-\varepsilon}2}+(d_2+1+\din)^{\frac{1-\varepsilon}2}}T\right)\right)$.

Select $T_0$ such that for $T\ge T_0$ and $d_1+d_2\le T+t_0$ the following inequalities hold:
\begin{eqnarray}
\left|C_i\left(d_1,d_2,x-\frac{1-x}T\right)-C_i(d_1,d_2,x)\right|&\le& \frac\varepsilon3C_i(d_1,d_2,x),\label{l2diff1a}\\
\left|C_i\left(d_1,d_2,x+\frac{x}T\right)-C_i(d_1,d_2,x)\right|&\le& \frac\varepsilon3C_i(d_1,d_2,x),\label{l2diff1b}\\
\left|C_i''(d_1,d_2,x)\right|&\le&\frac{\varepsilon}{3(1+2\din+A_{in})}TC_i(d_1,d_2,x).\label{l2diff2}
\end{eqnarray}
(Note that validity of these inequalities does not depend on $C_0$, so $T_0$ does not depend on $C_0$, which has not been selected yet.)

There are only finite number of pairs $(T,N)$ with $T\le T_0$, so it is possible to select $C_0$
such that \eqref{conmain} holds for $T\le T_0$. This is the base of induction.

Before proceeding with induction, we need to establish \eqref{conmain} for the cases
when $1-\frac{d_1+d_2+2\din}{T-1+\din N+t_0+\din n_0-\din}<0$.
If $d_1+d_2>T+t_0$, then $D_{d_1,d_2}(T,N)=0$, so \eqref{conmain} holds. If $N=O(1)$, then the bound $D_{d_1,d_2}\le (N+n_0)^2=O(1)$
is sufficient. The only other case when the expression above can be negative is $d_1+d_2=T+t_0$ and $\din=0$.
Then $\gamma>0$ and $a<1$; since $d_1+d_2=T+t_0$ means that $(\ddagger)$ has been used at most once not counting the first step,
the probability of that given $\#G_T=N+n_0$ is not greater than $Na^{N-2}$, so $D_{d_1,d_2}(T,N)\le(N+n_0)Na^{N-2}=O\left(\frac1{1+N}\right)$,
which is less than $C(T,d_1,d_2,x)$ if $C_0$ is sufficiently large.

Now assume that \eqref{conmain} is proved for some value of $T\ge T_0$ and consider the value $T+1$.
Assume also that $1-\frac{d_1+d_2+2\din}{T+\din N+t_0+\din n_0-\din}\ge0$.
Let $0\le N\le T+1$ and $x=\frac N{T+1}$.

If $\xi\in\left[x-\frac{1-x}T,x+\frac xT\right]$ and $d\le T+t_0$, we have
\begin{multline*}
f_d'(\xi)=O\left((d+1+\din)^{-1-\frac1{\cin(\xi)+\varepsilon}}\right)=O\left((d+1+\din)^{-1-\frac1{\cin(x)+\varepsilon}+O\left(\frac1T\right)}\right)\\
=O\left((d+1+\din)^{-1-\frac1{\cin(x)+\varepsilon}}\right)e^{O\left(\frac{\ln(d+1+\din)}T\right)}=O\left((d+1+\din)^{-1-\frac1{\cin(x)+\varepsilon}}\right)
\end{multline*}
and $f_d''(\xi)=O\left((d+1+\din)^{-1-\frac1{\cin(x)+\varepsilon}}\right)$ for the same reason.

If $N\ne T+1$, the inductive hypothesis and the Taylor formula imply that
\begin{multline*}
D_{d_1,d_2}(T,N)=T^2f_{d_1}\left(x+\frac xT\right)f_{d_2}\left(x+\frac xT\right)+\theta_1\\=
T^2f_{d_1}(x)f_{d_2}(x)+xT(f_{d_1}(x)f_{d_2}(x))'+O\left((d_1+1+\din)^{-1-\frac1{\cin+\varepsilon}}(d_2+1+\din)^{-1-\frac1{\cin+\varepsilon}})\right)+\theta_1,
\end{multline*}
$$
D_{d_1-1,d_2}(T,N)=T^2f_{d_1-1}(x)f_{d_2}(x)+O\left(T(d_1+1+\din)^{-1-\frac1{\cin+\varepsilon}}(d_2+1+\din)^{-1-\frac1{\cin+\varepsilon}})\right)+\theta_2,
$$
$$
D_{d_1,d_2-1}(T,N)=T^2f_{d_1}(x)f_{d_2-1}(x)+O\left(T(d_1+1+\din)^{-1-\frac1{\cin+\varepsilon}}(d_2+1+\din)^{-1-\frac1{\cin+\varepsilon}})\right)+\theta_3,
$$
where $|\theta_1|\le C\left(T,d_1,d_2,x+\frac xT\right)$, $|\theta_2|\le C\left(T,d_1-1,d_2,x+\frac xT\right)$ if $d_1\ge1$
and $\theta_2=0$ if $d_1=0$,\\ $|\theta_3|\le C\left(T,d_1,d_2-1,x+\frac xT\right)$ if $d_2\ge1$ and $\theta_3=0$ if $d_2=0$.
If $N\ne0$, similarly
\begin{multline*}
D_{d_1,d_2}(T,N-1)=T^2f_{d_1}\left(x-\frac{1-x}T\right)f_{d_2}\left(x-\frac{1-x}T\right)+\theta_4\\=
T^2f_{d_1}(x)f_{d_2}(x)-(1-x)T(f_{d_1}(x)f_{d_2}(x))'+O\left((d_1+1+\din)^{-1-\frac1{\cin+\varepsilon}}(d_2+1+\din)^{-1-\frac1{\cin+\varepsilon}})\right)+\theta_4,
\end{multline*}
$$
D_{d_1-1,d_2}(T,N-1)=T^2f_{d_1-1}(x)f_{d_2}(x)+O\left(T(d_1+1+\din)^{-1-\frac1{\cin+\varepsilon}}(d_2+1+\din)^{-1-\frac1{\cin+\varepsilon}})\right)+\theta_5,
$$
$$
D_{d_1,d_2-1}(T,N-1)=T^2f_{d_1}(x)f_{d_2-1}(x)+O\left(T(d_1+1+\din)^{-1-\frac1{\cin+\varepsilon}}(d_2+1+\din)^{-1-\frac1{\cin+\varepsilon}})\right)+\theta_6,
$$
where $|\theta_4|\le C\left(T,d_1,d_2,x-\frac{1-x}T\right)$, $|\theta_5|\le C\left(T,d_1-1,d_2,x-\frac{1-x}T\right)$ if $d_1\ge1$
and $\theta_5=0$ if $d_1=0$,\\ $|\theta_6|\le C\left(T,d_1,d_2-1,x-\frac{1-x}T\right)$ if $d_2\ge1$ and $\theta_6=0$ if $d_2=0$.

Substitute these representations and results of Lemma \ref{expect1} to \eqref{drecur} and expand:
\begin{multline*}
D_{d_1,d_2}(T+1,N)=T^2f_{d_1}(x)f_{d_2}(x)
-(1-x+ax)\frac{d_1+d_2+2\din}{1+\din x}Tf_{d_1}(x)f_{d_2}(x)\\
+(1-x+ax)\frac{d_1-1+\din}{1+\din x}Tf_{d_1-1}(x)f_{d_2}(x)
+(1-x+ax)\frac{d_2-1+\din}{1+\din x}Tf_{d_1}(x)f_{d_2-1}(x)\\
+[d_1=0]axTf_{d_2}(x)
+[d_2=0]axTf_{d_1}(x)
+[d_1=1](1-a)xTf_{d_2}(x)
+[d_2=1](1-a)xTf_{d_1}(x)\\
+O\left((d_1+d_2+2)(d_1+1+\din)^{-1-\frac1{\cin+\varepsilon}}(d_2+1+\din)^{-1-\frac1{\cin+\varepsilon}}\right)\\
+O\left(\frac{ax[d_1=0]+(1-a)x[d_1=1]}{1+(d_2+\din+1)^{1-\varepsilon}x^2}\right)
+O\left(\frac{ax[d_2=0]+(1-a)x[d_2=1]}{1+(d_1+\din+1)^{1-\varepsilon}x^2}\right)\\
+\theta_1(1-x)\left(1-\frac{d_1+d_2+2\din}{T(1+\din x)+\din x+A_{in}}\right)\\
+\theta_2(1-x)\frac{d_1-1+\din}{T(1+\din x)+\din x+A_{in}}
+\theta_3(1-x)\frac{d_2-1+\din}{T(1+\din x)+\din x+A_{in}}\\
+\theta_4 x\left(1-\frac{a(d_1+d_2+2\din)}{T(1+\din x)+\din x+A_{in}-\din}\right)\\
+\theta_5 ax\frac{d_1-1+\din}{T(1+\din x)+\din x+A_{in}-\din}
+\theta_6 ax\frac{d_2-1+\din}{T(1+\din x)+\din x+A_{in}-\din}.
\end{multline*}
The sum of terms without $\theta_i$ and $O(\cdot)$ equals $T^2f_{d_1}(x)f_{d_2}(x)+2Tf_{d_1}(x)f_{d_2}(x)$ due to \eqref{fdrecur}.
Denote the sum of other terms as $\theta+f_{d_1}(x)f_{d_2}(x)$. Then $D_{d_1,d_2}(T+1,N)=(T+1)^2f_{d_1}(x)f_{d_2}(x)+\theta$,
so we need to prove that $|\theta|\le (T+1)(C_1(d_1,d_2,x)+C_2(d_1,d_2,x))+C_3(d_1,d_2,x)$.

From the Taylor formula and \eqref{l2diff2} it follows that
$$
\left|xC\left(T,d_1,d_2,x-\frac{1-x}T\right)+(1-x)C\left(T,d_1,d_2,x+\frac xT\right)-C(T,d_1,d_2,x)\right|\le\frac{\varepsilon x(1-x)C(T,d_1,d_2,x)}{6T(1+2\din+A_{in})}.
$$
We know from \eqref{l1.temp} that
$(d_1+\din)C_3(d_1,d_2,x)-(d_1-1+\din)C_3(d_1-1,d_2,x)\ge\varepsilon C_3(d_1,d_2,x)$ if $d_1\ge2$
and the same holds for $d_1=0$ and $d_1=1$ due to $C_3(0,d_2,x)=0$.
Similarly, $(d+\din)(d+1+\din)^{-1+\varepsilon}-(d-1+\din)(d+\din)^{-1+\varepsilon}=\int_{d+\din}^{d+1+\din}
\left((z-1)z^{-1+\varepsilon}\right)'_zdz\ge0$ if $d\ge1$,
so $(d_1+\din)C_2(d_1,d_2,x)-(d_1-1+\din)C_2(d_1-1,d_2,x)\ge0$ (even for $d_1=0$).
It follows from the definition of $\hat f_d(x)$ that $(d_1+\din)C_1(d_1,d_2,x)-(d_1-1+\din)C_1(d_1-1,d_2,x)\ge-\frac1{\cin+\varepsilon}C_1(d_1,d_2,x)$
(with the equality if $d_1\ge2$).
Therefore,
$$
(d_1+\din)C(T,d_1,d_2,x)-(d_1+\din-1)C(T,d_1-1,d_2,x)\ge -\frac{T}{\cin+\varepsilon}C_1(d_1,d_2,x)+\varepsilon C_3(d_1,d_2,x),
$$
$$
(d_2+\din)C(T,d_1,d_2,x)-(d_2+\din-1)C(T,d_1,d_2-1,x)\ge -\frac{T}{\cin+\varepsilon}C_2(d_1,d_2,x)+\varepsilon C_3(d_1,d_2,x),
$$
\begin{multline*}
(d_1+d_2+2\din)C(T,d_1,d_2,x)-(d_1+\din-1)C(T,d_1-1,d_2,x)-(d_2+\din-1)C(T,d_1,d_2-1,x)\\
\ge -\frac{T}{\cin+\varepsilon}(C_1(d_1,d_2,x)+C_2(d_1,d_2,x))+2\varepsilon C_3(d_1,d_2,x).
\end{multline*}
Thus,
\begin{multline*}
|\theta|\le C(T,d_1,d_2,x)+\frac{\varepsilon x(1-x)C(T,d_1,d_2,x)}{6T(1+2\din+A_{in})}\\
+\frac{T(1-x)\frac1{\cin+\varepsilon}\left(C_1\left(d_1,d_2,x+\frac xT\right)+C_2\left(d_1,d_2,x+\frac xT\right)\right)-(1-x)2\varepsilon C_3\left(d_1,d_2,x+\frac xT\right)}{T(1+\din x)+\din x+A_{in}}\\
+\frac{Tax\frac1{\cin+\varepsilon}\left(C_1\left(d_1,d_2,x-\frac{1-x}T\right)+C_2\left(d_1,d_2,x-\frac{1-x}T\right)\right)-ax2\varepsilon C_3\left(d_1,d_2,x-\frac{1-x}T\right)}{T(1+\din x)+\din x+A_{in}-\din}\\
+O\left(\frac{(d_1+1+\din)^{-1-\frac1{\cin+\varepsilon}}}{1+(d_2+\din+1)^{1-\varepsilon}x^2}+\frac{(d_2+1+\din)^{-1-\frac1{\cin+\varepsilon}}}{1+(d_1+\din+1)^{1-\varepsilon}x^2}\right).
\end{multline*}
Apply bounds \eqref{l2diff1a} and \eqref{l2diff1b}. Terms without $O(\cdot)$ become a linear combination
of $C_i(d_1,d_2,x)$. $C_1$ and $C_2$ have the coefficient that is not greater than
$T+\frac{\varepsilon x(1-x)}{6(1+2\din+A_{in})}+\frac{(1-x+ax)\left(1+\frac\varepsilon3\right)}{(1+\din x)(\cin+\varepsilon)}\le
T+\frac\varepsilon6+\frac{\cin\left(1+\frac\varepsilon3\right)}{\cin+\varepsilon}\le
T+\frac\varepsilon6+\frac{1+\frac\varepsilon3}{1+\varepsilon}\le T+1-\frac\varepsilon6$.
$C_3$ has the coefficient that is not greater than
$1+\frac{\varepsilon x(1-x)}{6T(1+2\din+A_{in})}-\frac{(1-x+ax)2\varepsilon\left(1-\frac\varepsilon3\right)^2}{T(1+2\din+A_{in})}\le
1+\frac{\varepsilon(1-x)}{T(1+2\din+A_{in})}\left(\frac16-2\left(1-\frac\varepsilon3\right)^2\right)\le1$.
It remains to select $C_0$ such that $\frac\varepsilon6\left(C_1(d_1,d_2,x)+C_2(d_1,d_2,x)\right)\ge
O\left(\frac{(d_1+1+\din)^{-1-\frac1{\cin+\varepsilon}}}{1+(d_2+\din+1)^{1-\varepsilon}x^2}+\frac{(d_2+1+\din)^{-1-\frac1{\cin+\varepsilon}}}{1+(d_1+\din+1)^{1-\varepsilon}x^2}\right)$.
\end{proof}

\begin{proof}[Proof of Theorem 2]
Without loss of generality assume $\varepsilon<1$.
Let $x=\frac NT$.

We have $\Pr\left(|x-(\alpha+\gamma)|>\sqrt{\frac{\ln T}T}\right)\to0$ as $T\to\infty$;
therefore, it is sufficient to show that
$$\Pr\left(|n_{in}(t,d)-\of d t|>\left(\sqrt{\of d t}+(d+1)^{-\frac12+\varepsilon}\right)\ln t\Big| \#G_T=N+n_0\right)\to0$$
uniformly if $|x-(\alpha+\gamma)|\le\sqrt{\frac{\ln T}T}$.

From now on, assume $|x-(\alpha+\gamma)|\le\sqrt{\frac{\ln T}T}$. Then $\cin=\Theta(1)$ and similarly to
\eqref{fdbound} we have $f_d(x)=O\left((d+1)^{-1-\frac1\cin}\right)$ and $f_d'(x)=O\left((d+1)^{-1-\frac1\cin}\ln(d+2)\right)$.
Furthermore,\\ $|\cin-\ocin|=O(|x-(\alpha+\gamma)|\max|\cin'|)=O\left(\frac{\sqrt{\ln T}}T\right)$,
$f_d(x)=O\left((d+1)^{-1-\frac1\ocin+O\left(\sqrt{\frac{\ln T}T}\right)}\right)=O\left((d+1)^{-1-\frac1\ocin}\right)$.
Lemma \eqref{expect1} and the equality $\of d=f_d(\alpha+\gamma)$ imply
\begin{multline*}
\E(n_{in}(t,d)|\#G_T=N+n_0)=E_d(T,N)=Tf_d(x)+O\left((d+1)^{-1+\varepsilon}\right)\\
=T\of d+O\left(\sqrt{T\ln T}(d+1)^{-1-\frac1\ocin}\ln(d+2)+(d+1)^{-1+\varepsilon}\right).
\end{multline*}
Lemmas \eqref{expect1} and \eqref{dbound} imply
$$
D(n_{in}(t,d)|\#G_T=N+n_0)=D_{d,d}(T,N)+E_d(T,N)-E_d(T,N)^2=O\left(T(d+1)^{-1-\frac1\ocin}+(d+1)^{-1+\varepsilon}\right).
$$
If $\alpha=0$ and $d=0$, vertices with zero in-degree can come only from $G_0$, so $n_{in}(t,0)=O(1)$ in this case,
$\of0=0$ and the theorem holds. Otherwise, $\of d=\Theta\left((d+1)^{-1-\frac1\ocin}\right)$ and
if $t$ is sufficiently large, $|\E n_{in}(t,d)-\of dt|\le\frac12\left(\sqrt{\of d t}+(d+1)^{-\frac12+\varepsilon}\right)\ln t$,
\begin{multline*}
\Pr\left(|n_{in}(t,d)-\of d t|>\left(\sqrt{\of d t}+(d+1)^{-\frac12+\varepsilon}\right)\ln t\Big| \#G_T=N+n_0\right)\\
\le \Pr\left(|n_{in}(t,d)-\E(n_{in}(t,d)|\#G_T=N+n_0)|>\frac12\left(\sqrt{\of d t}+(d+1)^{-\frac12+\varepsilon}\right)\ln t\Big| \#G_T=N+n_0\right)\\
\le\frac{D(n_{in}(t,d)|\#G_T=N+n_0)}{\frac14\left(\sqrt{\of d t}+(d+1)^{-\frac12+\varepsilon}\right)^2\ln^2 t}
=O\left(\frac{t(d+1)^{-1-\frac1\ocin}+(d+1)^{-1+\varepsilon}}{(t(d+1)^{-1-\frac1\ocin}+(d+1)^{-1+2\varepsilon})\ln^2t}\right)
=O\left(\frac1{\ln^2t}\right)=o(1).
\end{multline*}
\end{proof}

\section{Expected number of edges between vertices with the given degree}
\subsection{Recurrent equation}
Similar to calculation of number of vertices, let
\begin{multline*}E_X(T,N,d_1,d_2)=\E(X(T+t_0,d_1,d_2)|\#G=N+n_0)\\
=\sum_{\substack{i,j=1\\i\ne j}}^{N+n_0}\frac{\E\left([\#G=N+n_0,\deg_{out,T}(i)=d_1,\deg_{in,T}(j)=d_2]V_T(i,j)\right)}{\Pr(\#G=N+n_0)},
\end{multline*}
where $V_T(i,j)$ is the number of edges from $i$ to $j$.

We start with a recurrent equation for $E_X$.
Again, let $G_{T+1}$ be a random graph from $\mathcal G(T+t_0+1)$ with $N+n_0$ vertices.
\begin{itemize}
\item If $G_{T+1}$ is constructed from $G_T\in\mathcal G(T+t_0)$ using $(\dagger)$, then $N>0$,
$G_T$ has $N+n_0-1$ vertices. Let $N+n_0$ be the new vertex and $w\in G_T$ be the target vertex of the new edge.
\begin{itemize}
\item If $i=N+n_0$ and $j=w$, then
$$[\deg_{out,T+1}(i)=d_1,\deg_{in,T+1}(j)=d_2]V_{T+1}(i,j)=[d_1=1,\deg_{in,T}(j)=d_2-1].$$
\item If $i=N+n_0$ and $j\ne w$, then
$$[\deg_{out,T+1}(i)=d_1,\deg_{in,T+1}(j)=d_2]V_{T+1}(i,j)=0.$$
\item If $i\ne N+n_0$ and $j=w$, then
$$[\deg_{out,T+1}(i)=d_1,\deg_{in,T+1}(j)=d_2]V_{T+1}(i,j)=[\deg_{out,T}(i)=d_1,\deg_{in,T}(j)=d_2-1]V_T(i,j).$$
\item If $i\ne N+n_0$ and $j\ne w$, then
$$[\deg_{out,T+1}(i)=d_1,\deg_{in,T+1}(j)=d_2]V_{T+1}(i,j)=[\deg_{out,T}(i)=d_1,\deg_{in,T}(j)=d_2]V_T(i,j).$$
\end{itemize}
Thus,
\begin{multline*}\sum_{\substack{i,j=1\\i\ne j}}^{N+n_0}[\deg_{out,T+1}(i)=d_1,\deg_{in,T+1}(j)=d_2]V_{T+1}(i,j)\\=
\sum_{\substack{i,j=1\\i\ne j}}^{N+n_0-1}[\deg_{out,T}(i)=d_1,\deg_{in,T}(j)=d_2]V_T(i,j)+
[d_1=1,\deg_{in,T}(w)=d_2-1]\\+
\sum_{i=1}^{N+n_0-1}[\deg_{out,T}(i)=d_1,\deg_{in,T}(w)=d_2-1]V_{T}(i,w)\\-
\sum_{i=1}^{N+n_0-1}[\deg_{out,T}(i)=d_1,\deg_{in,T}(w)=d_2]V_{T}(i,w)
\end{multline*}
Recall that $w$ is selected with probability $\frac{\deg_{in}(w)+\din}{T+\din(N-1)+A_{in}}$.
\begin{multline*}
\E\left(X(T+1+t_0,d_1,d_2|G_T,(\dagger)\right)=
X(T+t_0,d_1,d_2)\left(1-\frac{d_2+\din}{T+\din(N-1)+A_{in}}\right)\\+
\frac{d_2-1+\din}{T+\din(N-1)+A_{in}}[d_1=1]n_{in}(t,d_2-1)+\frac{d_2-1+\din}{T+\din(N-1)+A_{in}}X(T+t_0,d_1,d_2-1)
\end{multline*}
\item If $G_{T+1}$ is constructed from $G_T\in\mathcal G(T+t_0)$ using $(\dagger\dagger)$, then
$N<T+1$, $G_T$ has $N+n_0$ vertices. Let $v$ and $w$ be correspondingly the source and the target of the new edge.
Then $\deg_{out,T+1}(i)=\deg_{out,T}(i)-[i=v]$, $\deg_{in,T+1}(j)=\deg_{in,T}(j)-[j=w]$,
$V_{T+1}(i,j)=V_T(i,j)+[i=v,j=w]$, so
\begin{multline*}\sum_{\substack{i,j=1\\i\ne j}}^{N+n_0}[\deg_{out,T+1}(i)=d_1,\deg_{in,T+1}(j)=d_2]V_{T+1}(i,j)=
\sum_{\substack{i,j=1\\i\ne j}}^{N+n_0}[\deg_{out,T}(i)=d_1,\deg_{in,T}(j)=d_2]V_T(i,j)\\+
\sum_{\substack{i=1\\i\ne w}}^{N+n_0}[\deg_{out,T}(i)=d_1,\deg_{in,T}(w)=d_2-1]V_T(i,w)-
\sum_{\substack{i=1\\i\ne w}}^{N+n_0}[\deg_{out,T}(i)=d_1,\deg_{in,T}(w)=d_2]V_T(i,w)\\+
\sum_{\substack{j=1\\j\ne v}}^{N+n_0}[\deg_{out,T}(v)=d_1-1,\deg_{in,T}(j)=d_2]V_T(v,j)-
\sum_{\substack{j=1\\j\ne v}}^{N+n_0}[\deg_{out,T}(v)=d_1,\deg_{in,T}(j)=d_2]V_T(v,j)\\+
[\deg_{out,T}(v)=d_1-1,\deg_{out,T}(w)=d_2-1,v\ne w](V_T(v,w)+1)\\-[\deg_{out,T}(v)=d_1,\deg_{in,T}(w)=d_2-1,v\ne w]V_T(v,w)\\
-[\deg_{out,T}(v)=d_1-1,\deg_{in,T}(w)=d_2,v\ne w]V_T(v,w)\\+[\deg_{out,T}(v)=d_1,\deg_{in,T}(w)=d_2,v\ne w]V_T(v,w).
\end{multline*}
Recall that $v$ and $w$ are selected independently with probabilities $\frac{\deg_{T,out}(v)+\dout}{T+\dout N+A_{out}}$
and $\frac{\deg_{T,in}(w)+\din}{T+\din N+A_{in}}$ correspondingly.
\begin{multline*}
\E(X(T+1+t_0,d_1,d_2|G_T,(\dagger\dagger))=
X(T+t_0,d_1,d_2)\left(1-\frac{d_2+\din}{T+\din N+A_{in}}\right)\left(1-\frac{d_1+\dout}{T+\dout N+A_{out}}\right)\\+
X(T+t_0,d_1,d_2-1)\frac{d_2-1+\din}{T+\din N+A_{in}}\left(1-\frac{d_1+\dout}{T+\dout N+A_{out}}\right)\\+
X(T+t_0,d_1-1,d_2)\frac{d_1-1+\dout}{T+\dout N+A_{out}}\left(1-\frac{d_2+\din}{T+\din N+A_{in}}\right)\\+
\Bigg(X(T+t_0,d_1-1,d_2-1)+n_{out}(T+t_0,d_1-1)n_{in}(T+t_0,d_2-1)\\-
\sum_{v=1}^{N+n_0}[\deg_{T,out}(v)=d_1-1,\deg_{T,in}(v)=d_2-1]\Bigg)\frac{d_1-1+\dout}{T+\dout N+A_{out}}\frac{d_2-1+\din}{T+\din N+A_{in}}
\end{multline*}
\item The case $(\ddagger)$ is symmetrical to $(\dagger)$ with in- and out-degrees exchanged.
\end{itemize}
Finally, noting that $X(T+t_0,d_1-1,d_2-1)=O(T)$, $\sum_{v=1}^{N+n_0}[\deg_{T,out}(v)=d_1-1,\deg_{T,in}(v)=d_2-1]=O(T)$
and ignoring all terms that are $O_{d_1,d_2}(1/T)$, we obtain
\begin{multline}\label{exrecur}
E_X(T+1,N,d_1,d_2)=
[N>0]\frac{\alpha}{\alpha+\gamma}\frac{N}{T+1}\Bigg(
E_X(T,N-1,d_1,d_2)\left(1-\frac{d_2+\din}{T+\din(N-1)+A_{in}}\right)\\+
\frac{d_2-1+\din}{T+\din(N-1)+A_{in}}[d_1=1]E_{in,d_2-1}(T,N-1)+\frac{d_2-1+\din}{T+\din(N-1)+A_{in}}E_X(T,N-1,d_1,d_2-1)\Bigg)\\+
[N<T+1]\frac{T+1-N}{T+1}\Bigg(
E_X(T,N,d_1,d_2)\left(1-\frac{d_2+\din}{T+\din N+A_{in}}-\frac{d_1+\dout}{T+\dout N+A_{out}}\right)\\+
E_X(T,N,d_1,d_2-1)\frac{d_2-1+\din}{T+\din N+A_{in}}+
E_X(T,N,d_1-1,d_2)\frac{d_1-1+\dout}{T+\dout N+A_{out}}\\+
E_{out,d_1-1}(T,N)E_{in,d_2-1}(T,N)\frac{d_1-1+\dout}{T+\dout N+A_{out}}\frac{d_2-1+\din}{T+\din N+A_{in}}\Bigg)\\+
[N>0]\frac{\gamma}{\alpha+\gamma}\frac{N}{T+1}\Bigg(
E_X(T,N-1,d_1,d_2)\left(1-\frac{d_1+\dout}{T+\dout(N-1)+A_{out}}\right)\\+
\frac{d_1-1+\dout}{T+\dout(N-1)+A_{out}}[d_2=1]E_{out,d_1-1}(T,N)+\frac{d_1-1+\dout}{T+\din(N-1)+A_{in}}E_X(T,N-1,d_1-1,d_2)\Bigg)\\
+O_{d_1,d_2}\left(\frac1T\right).
\end{multline}
Here $E_{out,d}(T,N)$ and $E_{in,d}(T,N)$ are defined similarly to the proof of Theorem 1 related to out- and in-degrees
correspondingly (thus, $E_d(T,N)$ from the proof of Theorem 1 is $E_{in,d}(T,N)$; $E_{out,d}$ is the same with exchanged
values $\alpha\leftrightarrow\gamma$, $\din\leftrightarrow\dout$).

According to Lemma \ref{expect1},
$E_{out,d}(T,N)=Tf_{out,d}\left(\frac NT\right)+O(1)$ and $E_{in,d}(T,N)=Tf_{in,d}\left(\frac NT\right)+O(1)$,
where
$$f_{in,d}(x)=x\sum_{i=0}^1 [d\ge i]p_{in,i}\frac{\Gamma\left(i+\din+\frac1\cin\right)}{\cin\Gamma(i+\din)}
\frac{\Gamma(d+\din)}{\Gamma\left(d+1+\din+\frac1\cin\right)},$$
$$f_{out,d}(x)=x\sum_{i=0}^1 [d\ge i]p_{out,i}\frac{\Gamma\left(i+\dout+\frac1\cout\right)}{\cout\Gamma(i+\dout)}
\frac{\Gamma(d+\dout)}{\Gamma\left(d+1+\dout+\frac1\cout\right)},$$
(these representations assume $\din>0$, $\dout>0$, $\cin>0$ and $\cout>0$, but are more convenient to work with
than universal ones from Lemma \ref{expect1}).

\subsection{Some definite integrals}
Define
$$
I_1(c_1,c_2,\xi_1,\xi_2,\xi_3,\xi_4)=\iint_{0\le v^{c_1}\le w^{c_2}\le 1}v^{\xi_1-1}(1-v)^{\xi_2}w^{\xi_3-1}(1-w)^{\xi_4}dvdw
$$
for $c_1>0,c_2>0,\xi_1>0,\xi_2\ge0,\xi_3>0,\xi_4\ge0$.
\begin{lemma}\label{i1recur} The following recurrent equations for $I_1$ hold:
\begin{multline*}
(c_2(\xi_1+\xi_2)+c_1(\xi_3+\xi_4))I_1(c_1,c_2,\xi_1,\xi_2,\xi_3,\xi_4)=c_1\xi_4I_1(c_1,c_2,\xi_1,\xi_2,\xi_3,\xi_4-1)+c_2\xi_2I_1(c_1,c_2,\xi_1,\xi_2-1,\xi_3,\xi_4)
\\\mbox{ for }\xi_2\ge1,\xi_4\ge1;
\end{multline*}
$$
(c_2(\xi_1+\xi_2)+c_1\xi_3)I_1(c_1,c_2,\xi_1,\xi_2,\xi_3,0)=c_1\Beta(\xi_1,\xi_2+1)+c_2\xi_2I_1(\xi_1,\xi_2-1,\xi_3,0)
\mbox{ for }\xi_2\ge1;
$$
$$
(c_2\xi_1+c_1(\xi_3+\xi_4))I_1(c_1,c_2,\xi_1,0,\xi_3,\xi_4)=c_1\xi_4I_1(c_1,c_2,\xi_1,0,\xi_3,\xi_4-1)
\mbox{ for }\xi_4\ge1;
$$
$$
I_1(c_1,c_2,\xi_1,0,\xi_3,0)=\frac{c_1}{\xi_1(c_2\xi_1+c_1\xi_3)}.
$$
\end{lemma}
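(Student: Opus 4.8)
The plan is to obtain all four identities from one device applied to the region $R=\{(v,w):v\ge0,\ 0\le w\le1,\ v^{c_1}\le w^{c_2}\}$ (for fixed $w\in[0,1]$ this is $0\le v\le w^{c_2/c_1}$; for fixed $v\in[0,1]$ it is $v^{c_1/c_2}\le w\le1$): integrate over $R$ an exact $v$-derivative and an exact $w$-derivative of the natural power products, evaluate each of the two resulting integrals in two ways, and eliminate the common boundary term. Before that I would dispose of the routine prerequisites: since $\xi_1,\xi_3>0$ and $\xi_2,\xi_4\ge0$, the integrand of $I_1$ (and of every integral below) is bounded near $v=1$ and $w=1$ and has an integrable singularity near $v=0$ and $w=0$, so all integrals converge and Fubini applies; and $v\mapsto v^{\xi_1}(1-v)^{\xi_2}$ is absolutely continuous on each $[0,b]\subseteq[0,1]$, so the fundamental theorem of calculus is available even when $0<\xi_1<1$ or $0<\xi_3<1$.

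First I would handle the $v$-direction. From $\frac{\partial}{\partial v}\bigl[v^{\xi_1}(1-v)^{\xi_2}\bigr]=(\xi_1+\xi_2)v^{\xi_1-1}(1-v)^{\xi_2}-\xi_2 v^{\xi_1-1}(1-v)^{\xi_2-1}$ (for $\xi_2=0$ simply $\xi_1v^{\xi_1-1}$), integrating the product of this with $w^{\xi_3-1}(1-w)^{\xi_4}$ over $R$ gives, by linearity, $(\xi_1+\xi_2)I_1-\xi_2 I_1(c_1,c_2,\xi_1,\xi_2-1,\xi_3,\xi_4)$ when $\xi_2\ge1$ and $\xi_1I_1$ when $\xi_2=0$; evaluating instead by doing the $v$-integral first, the inner integral telescopes to $\bigl[v^{\xi_1}(1-v)^{\xi_2}\bigr]_{v=0}^{v=w^{c_2/c_1}}=w^{c_2\xi_1/c_1}(1-w^{c_2/c_1})^{\xi_2}$, so the same quantity equals
$$J:=\int_0^1 w^{\frac{c_2}{c_1}\xi_1+\xi_3-1}\bigl(1-w^{c_2/c_1}\bigr)^{\xi_2}(1-w)^{\xi_4}\,dw.$$
Symmetrically, integrating $v^{\xi_1-1}(1-v)^{\xi_2}\cdot\frac{\partial}{\partial w}\bigl[w^{\xi_3}(1-w)^{\xi_4}\bigr]$ over $R$ gives $(\xi_3+\xi_4)I_1-\xi_4 I_1(c_1,c_2,\xi_1,\xi_2,\xi_3,\xi_4-1)$ one way, while doing the $w$-integral first telescopes to $\bigl[w^{\xi_3}(1-w)^{\xi_4}\bigr]_{w=v^{c_1/c_2}}^{w=1}$; when $\xi_4\ge1$ the $w=1$ term vanishes, leaving $-\int_0^1 v^{\xi_1-1+c_1\xi_3/c_2}(1-v)^{\xi_2}(1-v^{c_1/c_2})^{\xi_4}\,dv$, and the substitution $v=w^{c_2/c_1}$ (so $1-v^{c_1/c_2}=1-w$, $dv=\tfrac{c_2}{c_1}w^{c_2/c_1-1}dw$) turns this into exactly $-\tfrac{c_2}{c_1}J$.

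The recurrences then fall out by linear algebra. For $\xi_2\ge1,\xi_4\ge1$ I have $c_2\bigl[(\xi_1+\xi_2)I_1-\xi_2 I_1(\ldots,\xi_2-1,\ldots)\bigr]=c_2J$ and $c_1\bigl[(\xi_3+\xi_4)I_1-\xi_4 I_1(\ldots,\xi_4-1)\bigr]=-c_2J$; adding gives the first recurrence. Taking $\xi_2=0$ throughout (the $v$-direction relation is then just $c_2\xi_1I_1=c_2J$) gives the third. For $\xi_4=0$ the $w=1$ endpoint no longer vanishes, $\bigl[w^{\xi_3}\bigr]_{w=v^{c_1/c_2}}^{w=1}=1-v^{c_1\xi_3/c_2}$, so the $w$-direction reads $\xi_3 I_1(\ldots,\xi_3,0)=\Beta(\xi_1,\xi_2+1)-\tfrac{c_2}{c_1}J$ (the constant is $\int_0^1 v^{\xi_1-1}(1-v)^{\xi_2}\,dv$), and combining (for $\xi_2\ge1$) with $c_2\bigl[(\xi_1+\xi_2)I_1(\ldots,\xi_3,0)-\xi_2 I_1(\ldots,\xi_2-1,\xi_3,0)\bigr]=c_2J$ yields the second. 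Finally, for $\xi_2=\xi_4=0$ I would just compute directly, $I_1(c_1,c_2,\xi_1,0,\xi_3,0)=\int_0^1 w^{\xi_3-1}\tfrac{w^{c_2\xi_1/c_1}}{\xi_1}\,dw=\tfrac{c_1}{\xi_1(c_2\xi_1+c_1\xi_3)}$, which is the stated base case.

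The only genuinely substantive point — everything else is bookkeeping — is the identification in the $w$-direction step: the two boundary integrals produced by the two evaluations (one written over $w$, one over $v$) both live on the curve $v^{c_1}=w^{c_2}$, and the change of variables $v=w^{c_2/c_1}$ shows the $v$-integral is precisely $\tfrac{c_2}{c_1}$ times $J$; this is exactly what forces the weights $c_1,c_2$ to appear in the final recurrences, and once $J$ has been eliminated the coefficients drop into place automatically. The minor traps to watch are the sign and value of the $w=1$ endpoint contribution (which is why the $\xi_4=0$ case must be split off, generating the extra $\Beta(\xi_1,\xi_2+1)$ term) and checking absolute continuity so that the fundamental theorem of calculus is legitimate at $v=0$ or $w=0$ when the relevant exponent lies in $(0,1)$.
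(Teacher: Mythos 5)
Your proof is correct and is essentially the paper's own argument in a thin disguise: your $v$-direction and $w$-direction relations are exactly the paper's intermediate identities obtained by integrating by parts in the inner and outer integrals (its \eqref{i1recur.inner} and \eqref{i1recur.outer}), with the same auxiliary integral $J$ eliminated by the same linear combination, and the same direct computation for the base case $\xi_2=\xi_4=0$. The only cosmetic difference is that you express the curve term as a $v$-integral and convert it to $\tfrac{c_2}{c_1}J$ by the substitution $v=w^{c_2/c_1}$, whereas the paper differentiates the upper limit of the inner integral and gets that term directly in $w$.
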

\begin{proof}
$$
I_1(c_1,c_2,\xi_1,\xi_2,\xi_3,\xi_4)=\int_0^1w^{\xi_3-1}(1-w)^{\xi_4}\left(\int_0^{w^{c_2/c_1}}v^{\xi_1-1}(1-v)^{\xi_2}dv\right)dw.
$$
Let $\xi_2\ge1$. Integrate by parts the inner integral, noting that $v=1-(1-v)$:
\begin{multline*}
\xi_1\int_0^{w^{c_2/c_1}}v^{\xi_1-1}(1-v)^{\xi_2}dv=\left(v^{\xi_1}(1-v)^{\xi_2}\right)\Bigg|_0^{w^{c_2/c_1}}+
\xi_2\int_0^{w^{c_2/c_1}}v^{\xi_1}(1-v)^{\xi_2-1}dv\\=w^{\xi_1c_2/c_1}\left(1-w^{c_2/c_1}\right)^{\xi_2}+
\xi_2\int_0^{w^{c_2/c_1}}v^{\xi_1-1}(1-v)^{\xi_2-1}dv-\xi_2\int_0^{w^{c_2/c_1}}v^{\xi_1-1}(1-v)^{\xi_2}dv;
\end{multline*}
\begin{equation}\label{i1recur.inner}
(\xi_1+\xi_2)I_1(c_1,c_2,\xi_1,\xi_2,\xi_3,\xi_4)=\int_0^1w^{\xi_1c_2/c_1+\xi_3-1}(1-w)^{\xi_4}\left(1-w^{c_2/c_1}\right)^{\xi_2}dw+
\xi_2I_1(c_1,c_2,\xi_1,\xi_2-1,\xi_3,\xi_4).
\end{equation}
Let $\xi_4\ge1$. Integrate by parts the outer integral:
\begin{multline*}\frac{d}{dw}\left((1-w)^{\xi_4}\left(\int_0^{w^{c_2/c_1}}v^{\xi_1-1}(1-v)^{\xi_2}dv\right)\right)=
-\xi_4(1-w)^{\xi_4-1}\left(\int_0^{w^{c_2/c_1}}v^{\xi_1-1}(1-v)^{\xi_2}dv\right)\\+
(1-w)^{\xi_4}\frac{c_2}{c_1}w^{c_2/c_1-1}\left(w^{c_2/c_1}\right)^{\xi_1-1}\left(1-w^{c_2/c_1}\right)^{\xi_2},
\end{multline*}
\begin{multline}\label{i1recur.outer}
\xi_3I_1(c_1,c_2,\xi_1,\xi_2,\xi_3,\xi_4)=\xi_4(I_1(c_1,c_2,\xi_1,\xi_2,\xi_3,\xi_4-1)-I_1(c_1,c_2,\xi_1,\xi_2,\xi_3,\xi_4))\\-
\frac{c_2}{c_1}\int_0^1w^{\xi_3+\xi_1c_2/c_1-1}(1-w)^{\xi_4}\left(1-w^{c_2/c_1}\right)^{\xi_2}dw.
\end{multline}
The first equality of the lemma ($\xi_2\ge1$ and $\xi_4\ge1$) follows from combining \eqref{i1recur.inner}
and \eqref{i1recur.outer}.
For $\xi_2\ge1$ and $\xi_4=0$, \eqref{i1recur.inner} holds, but integration by parts yields an additional term
instead of \eqref{i1recur.outer}:
\begin{multline*}
\xi_3I_1(c_1,c_2,\xi_1,\xi_2,\xi_3,0)=\int_0^1 v^{\xi_1-1}(1-v)^{\xi_2}dv-\frac{c_2}{c_1}\int_0^1 w^{\xi_3+\xi_1c_2/c_1-1}(1-w^{c_2/c_1})^{\xi_2}dw\\
=\Beta(\xi_1,\xi_2+1)-\frac{c_2}{c_1}\left((\xi_1+\xi_2)I_1(c_1,c_2,\xi_1,\xi_2,\xi_3,0)-\xi_2I_1(c_1,c_2,\xi_1,\xi_2-1,\xi_3,0)\right).
\end{multline*}
Finally, the case $\xi_2=0$ is same as $\xi_2\ge1$ without the last term in \eqref{i1recur.inner}.
\end{proof}

Define
$$
I_2(c_1,c_2,\xi_1,\xi_2,\xi_3,\xi_4,\xi_5)=\iint_{0\le v^{c_1}\le w^{c_2}\le1}v^{\xi_1-1}(1-v)^{\xi_2}w^{\xi_3-1}(1-w)^{\xi_4}
\left(\int_{w^{1/c_1}}^1t^{\xi_5-1}dt\right)dvdw
$$
for $c_1>0,c_2>0,\xi_1>0,\xi_2\ge0,\xi_3>0,\xi_4\ge0,\xi_3+\xi_5/\cin>0$. (Note: $\xi_5$ can be zero or negative, in this case
the inner integral is not defined for $w=0$; the conditions ensure that the (improper) outer integral converges).
\begin{lemma}\label{i2recur} The following recurrent equations for $I_2$ hold:
\begin{multline*}
(c_2(\xi_1+\xi_2)+c_1(\xi_3+\xi_4))I_2(c_1,c_2,\xi_1,\xi_2,\xi_3,\xi_4,\xi_5)=
I_1\left(c_1,c_2,\xi_1,\xi_2,\xi_3+\frac{\xi_5}{c_1},\xi_4\right)\\
+c_1\xi_4I_2(c_1,c_2,\xi_1,\xi_2,\xi_3,\xi_4-1,\xi_5)
+c_2\xi_2I_2(c_1,c_2,\xi_1,\xi_2-1,\xi_3,\xi_4,\xi_5)
\mbox{ for }\xi_2\ge1,\xi_4\ge1;
\end{multline*}
\begin{multline*}
(c_2(\xi_1+\xi_2)+c_1\xi_3)I_2(c_1,c_2,\xi_1,\xi_2,\xi_3,0,\xi_5)=
I_1\left(c_1,c_2,\xi_1,\xi_2,\xi_3+\frac{\xi_5}{c_1},0\right)\\
+c_2\xi_2I_2(c_1,c_2,\xi_1,\xi_2-1,\xi_3,0,\xi_5)
\mbox{ for }\xi_2\ge1;
\end{multline*}
\begin{multline*}
(c_2\xi_1+c_1(\xi_3+\xi_4))I_2(c_1,c_2,\xi_1,0,\xi_3,\xi_4,\xi_5)=
I_1\left(c_1,c_2,\xi_1,0,\xi_3+\frac{\xi_5}{c_1},\xi_4\right)\\
+c_1\xi_4I_2(c_1,c_2,\xi_1,0,\xi_3,\xi_4-1,\xi_5)
\mbox{ for }\xi_4\ge1;
\end{multline*}
\begin{equation*}
(c_2\xi_1+c_1\xi_3)I_2(c_1,c_2,\xi_1,0,\xi_3,0,\xi_5)=
I_1\left(c_1,c_2,\xi_1,0,\xi_3+\frac{\xi_5}{c_1},0\right).
\end{equation*}
\end{lemma}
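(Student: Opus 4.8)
The plan is to mirror the proof of Lemma \ref{i1recur}, carrying the same two integrations by parts through in the presence of the extra factor $K(w):=\int_{w^{1/c_1}}^1 t^{\xi_5-1}dt$. First I would write the region as $0\le v\le w^{c_2/c_1}$, $0\le w\le1$, so that
$$I_2(c_1,c_2,\xi_1,\xi_2,\xi_3,\xi_4,\xi_5)=\int_0^1 w^{\xi_3-1}(1-w)^{\xi_4}K(w)\,P(w)\,dw,\qquad P(w):=\int_0^{w^{c_2/c_1}}v^{\xi_1-1}(1-v)^{\xi_2}dv,$$
which differs from the corresponding expression for $I_1$ only by the factor $K(w)$, and $K(w)$ does not involve $v$.

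Next I would integrate by parts the inner $v$-integral exactly as in the derivation of \eqref{i1recur.inner}; since $K(w)$ rides along untouched, this gives, for $\xi_2\ge1$,
$$(\xi_1+\xi_2)I_2=J+\xi_2 I_2(c_1,c_2,\xi_1,\xi_2-1,\xi_3,\xi_4,\xi_5),\qquad J:=\int_0^1 w^{\xi_1 c_2/c_1+\xi_3-1}(1-w)^{\xi_4}(1-w^{c_2/c_1})^{\xi_2}K(w)\,dw,$$
with the last term absent when $\xi_2=0$. Then I would integrate by parts the outer $w$-integral, pairing the antiderivative $w^{\xi_3}/\xi_3$ of $w^{\xi_3-1}$ against $(1-w)^{\xi_4}K(w)P(w)$. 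Differentiating that product produces three contributions: the factor $-\xi_4(1-w)^{\xi_4-1}$ gives $\xi_4\bigl(I_2(\ldots,\xi_4-1,\ldots)-I_2\bigr)$ (present only for $\xi_4\ge1$); the factor $P'(w)=\frac{c_2}{c_1}w^{\xi_1 c_2/c_1-1}(1-w^{c_2/c_1})^{\xi_2}$ gives $-\frac{c_2}{c_1}J$, exactly as in \eqref{i1recur.outer}; and the genuinely new contribution comes from $K'(w)=-\frac1{c_1}w^{\xi_5/c_1-1}$, so that $-\int_0^1 w^{\xi_3}(1-w)^{\xi_4}K'(w)P(w)\,dw=\frac1{c_1}\int_0^1 w^{\xi_3+\xi_5/c_1-1}(1-w)^{\xi_4}P(w)\,dw=\frac1{c_1}I_1\bigl(c_1,c_2,\xi_1,\xi_2,\xi_3+\tfrac{\xi_5}{c_1},\xi_4\bigr)$ straight from the definition of $I_1$. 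The boundary term at $w=1$ vanishes because $(1-w)^{\xi_4}=0$ when $\xi_4\ge1$ and because $K(1)=0$ when $\xi_4=0$ — which is also why, in contrast to Lemma \ref{i1recur}, no stray beta-function term is produced in the $\xi_4=0$ case — and the boundary term at $w=0$ vanishes by the convergence hypothesis. Collecting these, one gets $\xi_3 I_2=\xi_4\bigl(I_2(\ldots,\xi_4-1,\ldots)-I_2\bigr)+\frac1{c_1}I_1(c_1,c_2,\xi_1,\xi_2,\xi_3+\tfrac{\xi_5}{c_1},\xi_4)-\frac{c_2}{c_1}J$, the first term dropped for $\xi_4=0$.

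Finally I would eliminate $J$ between this identity and the $v$-identity and multiply through by $c_1$: for $\xi_2\ge1$, $\xi_4\ge1$ this yields $(c_2(\xi_1+\xi_2)+c_1(\xi_3+\xi_4))I_2=I_1(c_1,c_2,\xi_1,\xi_2,\xi_3+\tfrac{\xi_5}{c_1},\xi_4)+c_1\xi_4 I_2(\ldots,\xi_4-1,\ldots)+c_2\xi_2 I_2(\ldots,\xi_2-1,\ldots)$, and the other three statements of the lemma are the specializations obtained by dropping the $c_1\xi_4 I_2$ term when $\xi_4=0$ and/or the $c_2\xi_2 I_2$ term when $\xi_2=0$ (for $\xi_2=\xi_4=0$ one may alternatively check $(c_2\xi_1+c_1\xi_3)I_2=I_1$ directly by swapping the order of integration over $0\le w\le t^{c_1}\le1$, which gives $I_2=\frac{c_1}{\xi_1(c_2\xi_1+c_1\xi_3)(c_2\xi_1+c_1\xi_3+\xi_5)}$). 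I expect the only real care — rather than a genuine obstacle — to be convergence near $w=0$ when $\xi_5\le0$, where $K(w)$ blows up (logarithmically if $\xi_5=0$, like $w^{\xi_5/c_1}$ if $\xi_5<0$): so I would first record that the stated hypothesis on $\xi_3,\xi_5$ together with $\xi_1>0$ makes $I_2$, $J$ and the shifted $I_1$ absolutely convergent, legitimizes the improper integrations by parts, and forces $w^{\xi_3}(1-w)^{\xi_4}K(w)P(w)\to0$ as $w\to0$.
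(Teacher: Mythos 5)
Your proposal is correct and follows essentially the same route as the paper: integrate by parts in $v$ and then in $w$, with the derivative of the extra factor $\int_{w^{1/c_1}}^1 t^{\xi_5-1}dt$ producing the $I_1$ term, and combine the two identities. Your added remarks (the $w=1$ boundary term vanishing because $K(1)=0$, which explains the absence of the beta-function term from Lemma \ref{i1recur}, and the convergence check near $w=0$) are accurate but only make explicit what the paper leaves implicit.
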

\begin{proof}
$$I_2(c_1,c_2,\xi_1,\xi_2,\xi_3,\xi_4,\xi_5)=\int_0^1 w^{\xi_3-1}(1-w)^{\xi_4}\left(\int_0^{w^{c_2/c_1}}v^{\xi_1-1}(1-v)^{\xi_2}dv\right)
\left(\int_{w^{1/c_1}}^1t^{\xi_5-1}dt\right)dw.$$
Let $\xi_2\ge1$ and $\xi_4\ge1$.
Similar to \eqref{i1recur.inner} from the previous lemma, integration by parts of $\int\dots dv$ gives
\begin{multline}\label{i2recur.inner}
(\xi_1+\xi_2)I_2(c_1,c_2,\xi_1,\xi_2,\xi_3,\xi_4,\xi_5)=\xi_2 I_2(c_1,c_2,\xi_1,\xi_2-1,\xi_3,\xi_4,\xi_5)\\
+\int_0^1 w^{\xi_1c_2/c_1+\xi_3-1}(1-w)^{\xi_4}\left(1-w^{c_2/c_1}\right)^{\xi_2}\int_{w^{1/c_1}}^1t^{\xi_5-1}dtdw.
\end{multline}
Integration by parts of $\int\dots dw$ is similar to \eqref{i2recur.inner} from the previous lemma with
an additional term from $\frac{d}{dw}\int_{w^{1/c_1}}^1t^{\xi_5-1}dt=-\frac1{c_1}w^{\xi_5/c_1-1}$:
\begin{multline}\label{i2recur.outer}
(\xi_3+\xi_4)I_2(c_1,c_2,\xi_1,\xi_2,\xi_3,\xi_4,\xi_5)=\xi_4I_2(c_1,c_2,\xi_1,\xi_2,\xi_3,\xi_4-1,\xi_5)\\
-\frac{c_2}{c_1}\int_0^1 w^{\xi_3+\xi_1c_2/c_1-1}(1-w)^{\xi_4}\left(1-w^{c_2/c_1}\right)^{\xi_2}\int_{w^{1/c_1}}^1t^{\xi_5-1}dtdw\\
+\frac1{c_1}\int_0^1 w^{\xi_3+\xi_5/c_1-1}(1-w)^{\xi_4}\left(\int_0^{w^{c_2/c_1}}v^{\xi_1-1}(1-v)^{\xi_2}dv\right)dw.
\end{multline}
The first equality of the lemma follows from combining \eqref{i2recur.inner} with \eqref{i2recur.outer} and recalling the definition of $I_1$.
The case $\xi_2=0$ is similar to \eqref{i2recur.inner} without the term $I_2(\dots,\xi_2-1,\dots)$; the case $\xi_4=0$ is similar to
\eqref{i2recur.outer} without the term $I_2(\dots,\xi_4-1,\dots)$.
\end{proof}

Define
\begin{multline*}
g_1(d_1,d_2)=\frac\gamma{\alpha+\gamma}\frac{x^2}{1+\dout x}
\sum_{i=0}^1[d_1\ge i+1]\frac{p_{out,i}}{\cin\cout}
\frac{\Gamma(d_1+\dout)}{\Gamma(d_1-i)\Gamma(\dout+i)}\frac{\Gamma(d_2+\din)}{\Gamma(d_2)\Gamma(1+\din)}\\\times
I_1\left(\cin,\cout,\dout+\frac1\cout+i,d_1-i-1,\din+\frac\cout\cin+1,d_2-1\right),
\end{multline*}
\begin{multline*}
g_2(d_1,d_2)=\frac\alpha{\alpha+\gamma}\frac{x^2}{1+\din x}
\sum_{i=0}^1[d_2\ge i+1]\frac{p_{in,i}}{\cin\cout}
\frac{\Gamma(d_1+\dout)}{\Gamma(d_1)\Gamma(1+\dout)}\frac{\Gamma(d_2+\din)}{\Gamma(d_2-i)\Gamma(\din+i)}\\\times
I_1\left(\cout,\cin,\din+\frac1\cin+i,d_2-i-1,\dout+\frac\cin\cout+1,d_1-1\right),
\end{multline*}
\begin{multline*}
g_3(d_1,d_2)={\textstyle\frac{x^2(1-x)}{(1+\din x)(1+\dout x)}}\sum_{i=0}^1\sum_{j=0}^1[d_1\ge i+1,d_2\ge j+1]\frac{p_{out,i}p_{in,j}}{c_{in}c_{out}}
\frac{\Gamma(d_1+\dout)}{\Gamma(d_1-i)\Gamma(\dout+i)}\frac{\Gamma(d_2+\din)}{\Gamma(d_2-j)\Gamma(\din+j)}\\\times
\Bigg(I_2\left(\cin,\cout,\dout+\frac1\cout+i,d_1-i-1,\din+\frac\cout\cin+1+j,d_2-j-1,1-\cin-\cout\right)\\
+I_2\left(\cout,\cin,\din+\frac1\cin+j,d_2-j-1,\dout+\frac\cin\cout+1+i,d_1-i-1,1-\cin-\cout\right)\Bigg)
\end{multline*}
for $d_1\ge1$, $d_2\ge1$. Define $g_1(0,d_2)=g_1(d_1,0)=0$.

Lemma \ref{i1recur} implies the following recurrent equations for $g_1,g_2$ and $d_1\ge1,d_2\ge1$:
\begin{multline}\label{g1recur}
(c_{in}(d_2+\din)+c_{out}(d_1+\dout)+1)g_1(d_1,d_2)=c_{in}(d_2-1+\din)g_1(d_1,d_2-1)+c_{out}(d_1-1+\dout)g_1(d_1-1,d_2)\\+
[d_2=1]\frac\gamma{\alpha+\gamma}x\frac{d_1-1+\dout}{1+\dout x}f_{out,d_1-1}(x);
\end{multline}
\begin{multline}\label{g2recur}
(c_{in}(d_2+\din)+c_{out}(d_1+\dout)+1)g_2(d_1,d_2)=c_{in}(d_2-1+\din)g_2(d_1,d_2-1)+c_{out}(d_1-1+\dout)g_2(d_1-1,d_2)\\+
[d_1=1]\frac\alpha{\alpha+\gamma}x\frac{d_2-1+\din}{1+\din x}f_{in,d_2-1}(x).
\end{multline}
Lemma \ref{i2recur} and the fact that
\begin{multline*}
\Beta(\xi_1,\xi_2+1)\Beta(\xi_3,\xi_4+1)\\=
\iint_{0\le v^{\cin}\le w^{\cout}\le 1}v^{\xi_1-1}(1-v)^{\xi_2}w^{\xi_3-1}(1-w)^{\xi_4}dvdw
+\iint_{0\le w^{\cout}\le v^{\cin}\le 1}v^{\xi_1-1}(1-v)^{\xi_2}w^{\xi_3-1}(1-w)^{\xi_4}dvdw\\=
I_1(\cin,\cout,\xi_1,\xi_2,\xi_3,\xi_4)+I_1(\cout,\cin,\xi_3,\xi_4,\xi_1,\xi_2)
\end{multline*}
imply that for $d_1\ge1,d_2\ge1$
\begin{multline}\label{g3recur}
(c_{in}(d_2+\din)+c_{out}(d_1+\dout)+1)g_3(d_1,d_2)=c_{in}(d_2-1+\din)g_3(d_1,d_2-1)+c_{out}(d_1-1+\dout)g_3(d_1-1,d_2)\\+
(1-x)\frac{d_1-1+\dout}{1+\dout x}\frac{d_2-1+\din}{1+\din x}f_{out,d_1-1}(x)f_{in,d_2-1}(x).
\end{multline}

Finally, define
\begin{equation}\label{g1def}
g(d_1,d_2)=g(x,d_1,d_2)=g_1(d_1,d_2)+g_2(d_1,d_2)+g_3(d_1,d_2).
\end{equation}
Then \eqref{g1recur}, \eqref{g2recur} and \eqref{g3recur} imply the recurrent equation
\begin{multline}\label{grecur}
(c_{in}(d_2+\din)+c_{out}(d_1+\dout)+1)g(d_1,d_2)=c_{in}(d_2-1+\din)g(d_1,d_2-1)+c_{out}(d_1-1+\dout)g(d_1-1,d_2)\\+
[d_2=1]\frac\gamma{\alpha+\gamma}x\frac{d_1-1+\dout}{1+\dout x}f_{out,d_1-1}(x)+
[d_1=1]\frac\alpha{\alpha+\gamma}x\frac{d_2-1+\din}{1+\din x}f_{in,d_2-1}(x)\\+
(1-x)\frac{d_1-1+\dout}{1+\dout x}\frac{d_2-1+\din}{1+\din x}f_{out,d_1-1}(x)f_{in,d_2-1}(x).
\end{multline}

\begin{lemma}\label{exasymp} $E_X(T,N,d_1,d_2)=g\left(\frac{N}{T},d_1,d_2\right)T+O_{d_1,d_2}(1)$.
\end{lemma}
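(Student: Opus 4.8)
The plan is to run exactly the induction scheme of the proof of Lemma \ref{expect1}, but now with an error bound that does not have to decay in the degrees (since $d_1,d_2$ are fixed), at the price of a bulkier recurrence. Concretely, I would prove by induction on $T$ that for every pair $(d_1,d_2)$ and every $0\le N\le T$
$$\left|E_X(T,N,d_1,d_2)-Tg\!\left(\tfrac NT,d_1,d_2\right)\right|\le C(d_1,d_2),$$
where the constants $C(d_1,d_2)$ are fixed beforehand by a well-founded recursion on $d_1+d_2$, grounded at $d_1=0$ or $d_2=0$. In those base cases $E_X\equiv0$, and one reads off from the definitions of $g_1,g_2,g_3$ (the indicators $[d_i\ge i+1]$ together with the $\Gamma(0)$ appearing in the denominators) that $g\equiv0$ too, so $C(0,d_2)=C(d_1,0)=0$ works. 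The two ingredients are the recurrence \eqref{exrecur} for $E_X$ and the already-established recurrence \eqref{grecur} for $g$.

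First the preliminaries. Since $\alpha,\gamma>0$, the rational functions $\cin(x),\cout(x)$ stay in a compact subinterval of $(0,1)$ for $x\in[0,1]$; consequently $f_{in,d},f_{out,d}$ (ratios of $\Gamma$'s in $1/\cin$, $1/\cout$) and the integrals $I_1,I_2$ from Lemmas \ref{i1recur} and \ref{i2recur} (whose parameters are smooth functions of $\cin(x),\cout(x)$ only) are $C^2$ functions of $x$ on $[0,1]$, so that $g$ and $\partial_xg,\partial_x^2g$ are $O_{d_1,d_2}(1)$ there; differentiation under the integral sign is justified by integrable domination, the one mildly delicate point being $I_2$ near a value of $x$ with $1-\cin(x)-\cout(x)=0$, which is harmless because the convergence condition $\xi_3+\xi_5/c_1>0$ holds throughout with room to spare. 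I would also record Lemma \ref{expect1} in the form $E_{in,d}(T,N)=Tf_{in,d}(N/T)+O_d(1)$, $E_{out,d}(T,N)=Tf_{out,d}(N/T)+O_d(1)$, and dispatch the base of the $T$-induction: for $T$ below some threshold $T_1(d_1,d_2)$ there are finitely many pairs $(T,N)$, and $E_X\le d_1\cdot n_{out}(T+t_0,d_1)\le d_1(T+t_0)=O_{d_1,d_2}(1)$ there, so $C(d_1,d_2)$ can be enlarged to absorb this range.

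For the inductive step, set $x=N/(T+1)$ and substitute into \eqref{exrecur} the Taylor expansions $E_X(T,N,d_1',d_2')=Tg(x,d_1',d_2')+xg'+O(1/T)+\theta$, $E_X(T,N-1,d_1',d_2')=Tg(x,d_1',d_2')-(1-x)g'+O(1/T)+\theta'$ with $|\theta|,|\theta'|\le C(d_1',d_2')$, and the analogous expansions of $E_{in},E_{out}$; the product $E_{out,d_1-1}E_{in,d_2-1}=T^2f_{out,d_1-1}f_{in,d_2-1}+O(T)$ meets an $O(1/T^2)$ coefficient. Collecting the $x$-smooth main terms and using $\bigl(1-x+\tfrac{\alpha}{\alpha+\gamma}x\bigr)/(1+\din x)=\cin(x)$ and $\bigl(1-x+\tfrac{\gamma}{\alpha+\gamma}x\bigr)/(1+\dout x)=\cout(x)$, the coefficients reorganise into
$$Tg(x,d_1,d_2)\Bigl(1-\tfrac1T\bigl(\cin(d_2+\din)+\cout(d_1+\dout)\bigr)\Bigr)+\cin(d_2-1+\din)g(x,d_1,d_2-1)+\cout(d_1-1+\dout)g(x,d_1-1,d_2)+S(x),$$
where $S(x)$ is precisely the source combination of \eqref{grecur}; by \eqref{grecur} this collapses to $(T+1)g(x,d_1,d_2)$, and the linear $g'$-terms cancel between the $N$ and $N-1$ instances of the diagonal exactly as in Lemma \ref{expect1} (off-diagonal $g'$-terms carry $O(1/T)$ coefficients). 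What remains is $O_{d_1,d_2}(1/T)$ — Taylor remainders controlled by $\partial_x^2g=O(1)$, the $O(1)$ errors of Lemma \ref{expect1} and of $E_X$ against $O(1/T)$ coefficients, and the explicit $O_{d_1,d_2}(1/T)$ of \eqref{exrecur} — plus a convex combination of the old errors: the diagonal error $\theta^{(d_1,d_2)}$ enters with total weight $1-\tfrac1T(\cin(x)(d_2+\din)+\cout(x)(d_1+\dout))+O(1/T^2)$, which is $\le1-c(d_1,d_2)/T$ uniformly in $x$ because $\din,\dout>0$ and $\cin,\cout$ are bounded below, while $\theta^{(d_1,d_2-1)},\theta^{(d_1-1,d_2)}$ enter with weight $O(1/T)$. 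Hence $|\theta^{(d_1,d_2)}_{T+1}|\le(1-c/T)C(d_1,d_2)+K(d_1,d_2)/T$ with $K(d_1,d_2)$ built from $C(d_1,d_2-1)$, $C(d_1-1,d_2)$ and the model and $G_0$ parameters, and choosing $C(d_1,d_2)\ge K(d_1,d_2)/c(d_1,d_2)$ (above the finitely many base values) closes the induction. Taking $N=T$ and recalling $E_X(T,T,d_1,d_2)=\E(X(T+t_0,d_1,d_2)\mid\#G=T+n_0)$ in fact gives the bound for every conditioning, and in particular the stated unconditional form of Lemma \ref{exasymp}.

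The real obstacle is the bookkeeping just sketched: one has to check that the $x$-dependent coefficients produced by \eqref{exrecur} reassemble into exactly the left-hand side of \eqref{grecur} — this is the whole point of introducing $g_1,g_2,g_3$ and their recurrences \eqref{g1recur}--\eqref{g3recur} — and simultaneously that the coefficient of the diagonal term is below $1$ by a margin of order $1/T$ uniformly in $x\in[0,1]$, including at the endpoints $x=0,1$ where one of the three update rules drops out; the positivity of that margin is precisely where $\din,\dout>0$ (equivalently $d_1,d_2\ge1$) is used. Everything else is routine once the smoothness and $O_{d_1,d_2}(1)$ bounds on $g,\partial_xg,\partial_x^2g$ are in place.
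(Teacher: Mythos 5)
Your proposal is correct and follows essentially the same route as the paper: a double induction (outer on $d_1+d_2$ grounded at $d_1=0$ or $d_2=0$ where both $E_X$ and $g$ vanish, inner on $T$), Taylor expansion in $x=N/(T+1)$ using the smoothness of $g$, collapsing the main terms via the recurrence \eqref{grecur}, and closing the induction because the diagonal error enters with weight $1-\Theta_{d_1,d_2}(1/T)$, which absorbs the $O_{d_1,d_2}(1/T)$ remainders for a sufficiently large constant $C(d_1,d_2)$. The only cosmetic slip is the parenthetical claim that positivity of the margin is ``where $\din,\dout>0$ (equivalently $d_1,d_2\ge1$) is used'' — these are not equivalent, but either suffices, so nothing is affected.
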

\begin{proof} The proof is essentially same as the proof of Lemma \ref{expect1}.

We use induction by $d_1+d_2$. Both sides are zero when $d_1=0$ or $d_2=0$. For $d_1\ge1$ and $d_2\ge1$
note that $g$ as a function in $x\in[0,1]$ is analytical, so $g(x,d_1,d_2)=O_{d_1,d_2}(1)$,
$g'(x,d_1,d_2)=O_{d_1,d_2}(1)$, $g''(x,d_1,d_2)=O_{d_1,d_2}(1)$ (we use derivatives with respect to $x$).
For fixed $d_1$ and $d_2$ we use induction by $T$ to prove that
$$
\left|E_X(T,N,d_1,d_2)-g\left(\frac NT,d_1,d_2\right)\right|\le C=C(d_1,d_2),
$$
where the constant $C(d_1,d_2)$ will be selected later, assuming that this inequality holds for $d_1-1,d_2$ and for $d_1,d_2-1$ with all $T$.
Induction base $T\le T_0(d_1,d_2)$ (where the value $T_0$ will be selected later) is trivial.
Now assume that the bound for $T$ is proved and consider $T+1$.
Let $0\le N\le T+1$ and $x=\frac{N}{T+1}$. If $N\ne T+1$, the inductive hypothesis and the Taylor formula
imply that
\begin{multline*}
E_X(T,N,d_1,d_2)=Tg\left(x+\frac xT,d_1,d_2\right)+\theta_1=Tg(x,d_1,d_2)+xg'(x,d_1,d_2)+\frac{x^2}{2T}g''(\xi,d_1,d_2)+\theta_1\\=
Tg(x,d_1,d_2)+xg'(x,d_1,d_2)+O_{d_1,d_2}\left(\frac1T\right)+\theta_1,
\end{multline*}
where $|\theta_1|\le C$. Similarly,
$E_X(T,N,d_1-1,d_2)=Tg(x,d_1-1,d_2)+O_{d_1,d_2}(1)$ (even for $d_1=1$) and $E_X(T,N,d_1,d_2-1)=Tg(x,d_1,d_2-1)+O_{d_1,d_2}(1)$
(even for $d_2=1$).
If $N\ne0$, we have for the same reasons
\begin{multline*}
E_X(T,N-1,d_1,d_2)=Tg\left(x-\frac{1-x}T,d_1,d_2\right)+\theta_2\\=Tg(x,d_1,d_2)-(1-x)g'(x,d_1,d_2)+\frac{(1-x)^2}{2T}g''(\xi,d_1,d_2)+\theta_2\\
=Tg(x,d_1,d_2)-(1-x)g'(x,d_1,d_2)+O\left(\frac{1-x}T\right)+\theta_2,
\end{multline*}
where $|\theta_2|\le C$. Similarly,
$E_X(T,N-1,d_1-1,d_2)=Tg(x,d_1-1,d_2)+O_{d_1,d_2}(1)$ and $E_X(T,N-1,d_1,d_2-1)=Tg(x,d_1,d_2-1)+O_{d_1,d_2}(1)$.

Substitute these representations in \eqref{exrecur}. The sum of terms of order $T$ in right-hand side is $Tg(x,d_1,d_2)$,
terms of order $1$ are exactly as in \eqref{grecur} and give $g(x,d_1,d_2)$ in total. Thus,
\begin{multline*}
E_X(T+1,N,d_1,d_2)=(T+1)g\left(\frac N{T+1},d_1,d_2\right)
\\+\theta_1(1-x)\left(1-\frac{d_2+\din}{T(1+\din x)+\din x+A_{in}}-\frac{d_1+\dout}{T(1+\dout x)+\dout x+A_{out}}\right)
\\+\theta_2x\left(1-\frac{\frac\alpha{\alpha+\gamma}(d_2+\din)}{T(1+\din x)+\din(x-1)+A_{in}}-\frac{\frac\gamma{\alpha+\gamma}(d_1+\dout)}{T(1+\dout x)+\dout(x-1)+A_{out}}\right)
+O_{d_1,d_2}\left(\frac1T\right).
\end{multline*}
Select $T_0$ such that coefficients in $\theta_i$ are non-negative for all $T\ge T_0$ and $x\in[0,1]$. Then, for any sufficiently large $C$
we have
\begin{multline*}
\left|E_X(T+1,N,d_1,d_2)-(T+1)g\left(\frac N{T+1},d_1,d_2\right)\right|\le C+O_{d_1,d_2}\left(\frac1T\right)\\
{\textstyle-
C\left(\frac{(1-x)(d_2+\din)}{T(1+\din x)+\din x+A_{in}}+\frac{(1-x)(d_1+\dout)}{T(1+\dout x)+\dout x+A_{out}}+
\frac{\frac\alpha{\alpha+\gamma}x(d_2+\din)}{T(1+\din x)+\din(x-1)+A_{in}}+\frac{\frac\gamma{\alpha+\gamma}x(d_1+\dout)}{T(1+\dout x)+\dout(x-1)+A_{out}}\right)}
\le C.
\end{multline*}
\end{proof}

\subsection{Asymptotic behaviour}
\begin{lemma}\label{kappaasymp}
$$
\kappa(c_1,c_2,r,x)=\Gamma(c_1)\Gamma(c_2)-\kappa\left(c_2,c_1,\frac1r,x^{-1/r}\right).
$$
\end{lemma}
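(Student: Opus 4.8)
The plan is to prove the identity by a single change of variables in the double integral defining $\kappa$, which converts $\kappa(c_1,c_2,r,x)$ into the complementary ``tail'' of the integral defining $\kappa(c_2,c_1,\frac1r,\cdot)$. First I would write
$$\kappa(c_1,c_2,r,x)=\iint_{0<z<x,\ \tau>0}z^{c_1-1}\tau^{c_1r+c_2-1}e^{-\tau-z\tau^r}\,dz\,d\tau.$$
The integrand is nonnegative and the whole integral is at most $\Gamma(c_1)\Gamma(c_2)<\infty$ (its value at $x=\infty$), so by Tonelli's theorem this is an absolutely convergent double integral and a smooth change of variables may be applied without scruple.

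The substitution to use is $u=z^{-1/r}$, $\sigma=z\tau^r$, chosen precisely so that the two exponent terms $\tau$ and $z\tau^r$ become $u\sigma^{1/r}$ and $\sigma$ --- exactly the combination appearing in the definition of $\kappa(c_2,c_1,\frac1r,\cdot)$. This is a diffeomorphism of $(0,\infty)^2$ onto itself, with inverse $z=u^{-r}$, $\tau=\sigma^{1/r}u$ and Jacobian $\bigl|\partial(z,\tau)/\partial(u,\sigma)\bigr|=u^{-r}\sigma^{1/r-1}$. Substituting $z=u^{-r}$, $\tau=\sigma^{1/r}u$ and collecting powers (this is the one computation to carry out in full) gives
$$z^{c_1-1}\tau^{c_1r+c_2-1}\cdot u^{-r}\sigma^{1/r-1}=u^{c_2-1}\sigma^{c_1+\frac{c_2}{r}-1},\qquad e^{-\tau-z\tau^r}=e^{-\sigma-u\sigma^{1/r}},$$
and the factor $u^{c_2-1}\sigma^{c_1+\frac{c_2}{r}-1}$ is precisely the polynomial weight in $\kappa(c_2,c_1,\frac1r,\cdot)$ (recall that the ``$c_1r+c_2$'' in the generic formula reads ``$c_2\cdot\frac1r+c_1=c_1+\frac{c_2}{r}$'' when the first and third arguments are $c_2$ and $\frac1r$). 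Since $0<z<x$ translates, via the decreasing map $z\mapsto z^{-1/r}$, into $u>x^{-1/r}$, and $\tau>0$ into $\sigma>0$, I obtain
$$\kappa(c_1,c_2,r,x)=\iint_{u>x^{-1/r},\ \sigma>0}u^{c_2-1}\sigma^{c_1+\frac{c_2}{r}-1}e^{-\sigma-u\sigma^{1/r}}\,d\sigma\,du.$$

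To conclude, the same integrand over the full quadrant $\{u>0,\ \sigma>0\}$ equals $\kappa(c_2,c_1,\frac1r,\infty)=\Gamma(c_2)\Gamma(c_1)=\Gamma(c_1)\Gamma(c_2)$ by the limiting value of $\kappa$ recorded just before the lemma (alternatively, integrate in $u$ first to get $\Gamma(c_2)\sigma^{-c_2/r}$, then in $\sigma$). Subtracting the contribution of the region $\{0<u<x^{-1/r},\ \sigma>0\}$, which is by definition $\kappa(c_2,c_1,\frac1r,x^{-1/r})$, gives the claimed identity; the degenerate case $x=0$, where $x^{-1/r}=\infty$ and both sides vanish, is immediate. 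There is no real obstacle here: the change of variables is legitimate by nonnegativity and finiteness, and the only thing to be careful about is the bookkeeping of exponents in the Jacobian computation.
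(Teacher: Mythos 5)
Your proof is correct and takes essentially the same route as the paper: the key step in both is the change of variables $u=z^{-1/r}$, $\sigma=z\tau^{r}$ (the paper's $\hat z$, $\hat\tau$), together with the evaluation of the full double integral as $\Gamma(c_1)\Gamma(c_2)$. The only cosmetic difference is that the paper substitutes in the tail region $z>x$ to land directly on $\kappa\left(c_2,c_1,\frac1r,x^{-1/r}\right)$, whereas you substitute in $0<z<x$ and then subtract from the full quadrant; the computation is the same.
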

\begin{proof}
$$
\int_0^\infty z^{c_1-1}e^{-z\tau^r}dz=(\tau^r)^{-c_1}\int_0^\infty(z\tau^r)^{c_1-1}e^{-z\tau^r}d(z\tau^r)=\tau^{-c_1r}\Gamma(c_1);
$$
$$
\int_0^\infty\int_0^\infty z^{c_1-1}\tau^{c_1r+c_2-1}e^{-\tau-z\tau^r}dzd\tau=\int_0^\infty(\tau^r)^{-c_1}\Gamma(c_1)\tau^{c_1r+c_2-1}e^{-\tau}d\tau=
\Gamma(c_1)\Gamma(c_2);
$$
$$
\Gamma(c_1)\Gamma(c_2)-\kappa(c_1,c_2,r,x)=\int_x^\infty dz\int_0^\infty z^{c_1-1}\tau^{c_1r+c_2-1}e^{-\tau-z\tau^r}d\tau.
$$
Replace variables $\hat z=1/z^{1/r}, \hat\tau=z\tau^r$: $z=1/\hat z^r$, $\tau=\hat\tau^{1/r}\hat z$,
$dz=-r\hat z^{-r-1}d\hat z$, $d\tau=\frac1r\hat z\hat\tau^{1/r-1}d\hat\tau$,
$$
\Gamma(c_1)\Gamma(c_2)-\kappa(c_1,c_2,r,x)=\int_0^{x^{-1/r}}d\hat z\int_0^\infty \hat z^{c_2-1}\hat\tau^{c_1+c_2/r-1}e^{-\hat\tau^{1/r}\hat z-\hat\tau}d\hat\tau.
$$
\end{proof}

\begin{lemma}\label{i1asymp} Let $r>0,\xi_1>0,\xi_3>0$ be constants depending only on the model parameters. Then
$$I_1(c_1,c_1r,\xi_1,d_1,\xi_3,d_2)=
d_1^{-\xi_1}d_2^{-\xi_3}\kappa\left(\xi_1,\xi_3,r,\frac{d_1}{d_2^r}\right)\left(1+O\left(\frac1{d_2^{\min(r,1)}}\right)\right).$$
\end{lemma}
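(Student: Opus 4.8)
The plan is to reduce $I_1$ to the defining double integral of $\kappa$ by two changes of variable and then control the error of replacing a power $(1-q)^m$ by $e^{-mq}$. Since here $c_2=c_1r$, the domain $\{0\le v^{c_1}\le w^{c_2}\le1\}$ is simply $\{0\le v\le w^r,\ 0\le w\le1\}$, so the substitution $v=w^ru$ with $u\in[0,1]$, followed by $w=\tau/d_2$ with $\tau\in[0,d_2]$, gives
$$I_1(c_1,c_1r,\xi_1,d_1,\xi_3,d_2)=d_2^{-r\xi_1-\xi_3}\int_0^{d_2}\tau^{r\xi_1+\xi_3-1}\left(1-\frac\tau{d_2}\right)^{d_2}\int_0^1 u^{\xi_1-1}\left(1-\left(\frac\tau{d_2}\right)^r u\right)^{d_1}du\,d\tau.$$
Using the second form of $\kappa$ recorded above, $\kappa(c_1,c_2,r,x)=x^{c_1}\int_0^1 z^{c_1-1}\int_0^\infty\tau^{c_1r+c_2-1}e^{-\tau-xz\tau^r}d\tau\,dz$, and putting $x=d_1/d_2^r$, so that $d_2^{-r\xi_1-\xi_3}=d_1^{-\xi_1}d_2^{-\xi_3}x^{\xi_1}$, one checks that the claimed main term equals
$$d_1^{-\xi_1}d_2^{-\xi_3}\kappa\left(\xi_1,\xi_3,r,\tfrac{d_1}{d_2^r}\right)=d_2^{-r\xi_1-\xi_3}\int_0^\infty\tau^{r\xi_1+\xi_3-1}e^{-\tau}\int_0^1 u^{\xi_1-1}e^{-xu\tau^r}du\,d\tau,$$
i.e. it has exactly the same shape as $I_1$, with $(1-\tau/d_2)^{d_2}$ replaced by $e^{-\tau}$, $(1-(\tau/d_2)^ru)^{d_1}$ by $e^{-xu\tau^r}$, and the upper limit $d_2$ by $\infty$.

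It then remains to bound the difference $\Delta$ of the two inner double integrals. Because $(1-q)^m\le e^{-mq}$ for $q\in[0,1]$ we immediately get $I_1\le d_1^{-\xi_1}d_2^{-\xi_3}\kappa$, so only an upper bound on the non-negative quantity $\Delta$ is needed. The tail $\tau>d_2$ and the part of $[0,d_2]$ where $\tau$ is within a fixed factor of $d_2$ contribute amounts exponentially small in $d_2$ (there $e^{-\tau}$, or $(1-\tau/d_2)^{d_2}$, has size $e^{-cd_2}$), so the work is on $0\le\tau\le d_2/C$, where both $\tau/d_2$ and $(\tau/d_2)^ru$ lie in $[0,\tfrac12]$. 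On that range I apply the elementary bound $0\le e^{-mq}-(1-q)^m\le e^{-mq}mq^2$ (valid for $q\in[0,\tfrac12]$: it follows from $1-e^{-t}\le t$ with $t=-mq-m\ln(1-q)\in[0,mq^2]$) with $(q,m)=(\tau/d_2,d_2)$ and with $(q,m)=((\tau/d_2)^ru,d_1)$; factoring $e^{-\tau-xu\tau^r}$ out of the bracket and using $d_1/d_2^{2r}=x/d_2^r$ bounds the integrand of $\Delta$ by $e^{-\tau-xu\tau^r}\bigl(\tau^2/d_2+x\,\tau^{2r}u^2/d_2^r\bigr)$.

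The crucial observation is that the two resulting integrals are again $\kappa$'s with shifted parameters: the $\tau^2$ piece equals $\kappa(\xi_1,\xi_3+2,r,x)/x^{\xi_1}$, and the $x\,\tau^{2r}u^2$ piece equals $\kappa(\xi_1+2,\xi_3,r,x)/x^{\xi_1+1}$. Dividing by the main term $\kappa(\xi_1,\xi_3,r,x)/x^{\xi_1}$, the relative error is
$$O\!\left(\frac1{d_2}\cdot\frac{\kappa(\xi_1,\xi_3+2,r,x)}{\kappa(\xi_1,\xi_3,r,x)}+\frac1{d_2^r}\cdot\frac{\kappa(\xi_1+2,\xi_3,r,x)}{x\,\kappa(\xi_1,\xi_3,r,x)}\right).$$
By the asymptotics recorded above, $\kappa(c_1,c_2,r,x)=(\Gamma(c_1r+c_2)/c_1+O(x))x^{c_1}$ as $x\to0$ and $\kappa\to\Gamma(c_1)\Gamma(c_2)$ as $x\to\infty$, both ratios $\kappa(\xi_1,\xi_3+2,r,x)/\kappa(\xi_1,\xi_3,r,x)$ and $\kappa(\xi_1+2,\xi_3,r,x)/\bigl(x\,\kappa(\xi_1,\xi_3,r,x)\bigr)$ extend continuously to $x\in[0,\infty]$ and hence are bounded by a constant depending only on $r,\xi_1,\xi_3$. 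Since $x\le1$ is equivalent to $d_1\le d_2^r$, the second term is $O(d_2^{-r})$ in every case and the first is $O(d_2^{-1})$, giving $\Delta/(\kappa(\xi_1,\xi_3,r,x)/x^{\xi_1})=O(d_2^{-\min(r,1)})$, which is the assertion.

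I expect the main obstacle to be precisely this uniformity in $x=d_1/d_2^r$ — keeping the relative error small both when $x$ is tiny and when $x$ is huge — which is exactly what the re-identification of the error integrals as $\kappa$'s with $\xi_3\mapsto\xi_3+2$ and $\xi_1\mapsto\xi_1+2$, together with boundedness of the resulting ratios, is designed to achieve. The same device (estimating $\int_0^1 u^{\xi_1-1}e^{-xu\tau^r}du$ by $\min\{1/\xi_1,\ \Gamma(\xi_1)(x\tau^r)^{-\xi_1}\}$) makes the exponentially small tail contributions manifestly $o(d_2^{-\min(r,1)})$ relative to the main term; everything else is a routine estimate.
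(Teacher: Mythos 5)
Your argument is correct, and it is genuinely different from the paper's in how the error is controlled, even though it opens with the same move (the substitution $v=w^r u$ reducing $I_1$ to a Beta-type kernel against the exponential kernel defining $\kappa$). The paper keeps $z$ fixed and produces a matching \emph{lower} bound for the inner integral $I_z$ by further substitutions ($\tau=w/(1-w)$ when $r\ge1$, $\tau=w/(1-w^r)^{1/r}$ when $r\le1$) together with inequalities such as $(1+\tau)^r\ge1+\tau^r$, splitting into the cases $r\ge1$/$r\le1$ and $zd_1\ge d_2^r$/$zd_1<d_2^r$; this yields a multiplicative error pointwise in $z$, so uniformity in $x=d_1/d_2^r$ is automatic and the $z$-integration is immediate. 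You instead rescale $w=\tau/d_2$, sandwich the integrand on the bulk $\tau\le d_2/C$ with $0\le e^{-mq}-(1-q)^m\le e^{-mq}mq^2$ (valid since $\tau/d_2$ and $(\tau/d_2)^r u$ stay below $\tfrac12$ there), and recognize the two error integrals as $\kappa(\xi_1,\xi_3+2,r,x)/x^{\xi_1}$ and $\kappa(\xi_1+2,\xi_3,r,x)/x^{\xi_1+1}$, so that the relative error reduces to the boundedness of the ratios $\kappa(\xi_1,\xi_3+2,r,x)/\kappa(\xi_1,\xi_3,r,x)$ and $\kappa(\xi_1+2,\xi_3,r,x)/\bigl(x\,\kappa(\xi_1,\xi_3,r,x)\bigr)$ uniformly in $x\in(0,\infty)$, which indeed follows from the $x\to0$ expansion and the limit $\Gamma(c_1)\Gamma(c_2)$ at infinity. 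This buys you freedom from all the case distinctions, at the price of needing that compactness-type uniformity and an explicit tail estimate: for $\tau\ge d_2/C$ you only sketch the argument, and to make it airtight you should note that the main term $\kappa(\xi_1,\xi_3,r,x)/x^{\xi_1}$ is bounded below by a constant for $x\le x_0$ and by $\mathrm{const}\cdot x^{-\xi_1}$ for $x\ge x_0$ (both follow from monotonicity of $\kappa$), which together with your bound $\int_0^1u^{\xi_1-1}e^{-xu\tau^r}du\le\min\{1/\xi_1,\Gamma(\xi_1)(x\tau^r)^{-\xi_1}\}$ makes the tail uniformly $O(e^{-cd_2})$ relative to the main term; with that spelled out, your proof delivers the stated $1+O\bigl(d_2^{-\min(r,1)}\bigr)$ factor.
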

\begin{proof}
Replace variables: let $v=w^{r}z$.
\begin{multline*}
\iint_{0\le v\le w^{r}\le 1}v^{\xi_1-1}(1-v)^{d_1}w^{\xi_3-1}(1-w)^{d_2}dvdw\\
=\int_0^1\int_0^1w^{\xi_1r-r+\xi_3-1}z^{\xi_1-1}(1-w^{r}z)^{d_1}(1-w)^{d_2}w^rdzdw
=\int_0^1 z^{\xi_1-1}I_zdz,
\end{multline*}
where $I_z=\int_0^1 w^{\xi_1r+\xi_3-1}(1-w)^{d_2}(1-w^{r}z)^{d_1}dw$.

Since $\ln(1+x)\le x$ for any $x>-1$, we have $(1-w)^{d_2}=\exp(d_2\ln(1-w))\le\exp(-d_2w)$ and
$(1-w^{r}z)^{d_1}\le\exp(-d_1w^{r}z)$, so
$$I_z\le\int_0^\infty w^{\xi_1r+\xi_3-1}e^{-d_1w^{r}z-d_2w}dw=d_2^{-\xi_1r-\xi_3}\int_0^\infty w^{\xi_1r+\xi_3-1}e^{-d_1\left(\frac{w}{d_2}\right)^rz-w}dw.$$

If $r\ge1$, let $\tau=\frac w{1-w}$, then $w=\frac{\tau}{1+\tau}$, $dw=\frac{d\tau}{(1+\tau)^2}$,
$$
I_z=\int_0^\infty\frac{\tau^{\xi_1r+\xi_3-1}}{(1+\tau)^{\xi_1r+\xi_3+1+d_2}}\left(1-z\left(\frac{\tau}{1+\tau}\right)^r\right)^{d_1}d\tau
$$
Comparing derivatives, it is easy to see that $(1+\tau)^r\ge 1+\tau^r$ for $r\ge1$. Thus,
$$1-z\left(\frac{\tau}{1+\tau}\right)^r\ge1-\frac{z\tau^r}{1+\tau^r}\ge1-\frac{z\tau^r}{1+z\tau^r}=\frac1{1+z\tau^r}\ge\frac1{\exp(z\tau^r)},$$
\begin{multline*}I_z\ge\int_0^\infty\tau^{\xi_1r+\xi_3-1}e^{-(\xi_1r+\xi_3+1+d_2)\tau-d_1z\tau^r}d\tau
\ge(\xi_1r+\xi_3+1+d_2)^{-\xi_1r-\xi_3}\int_0^\infty\tau^{\xi_1r+\xi_3-1}e^{-\tau-d_1z\left(\frac{\tau}{d_2}\right)^r}d\tau\\=
d_2^{-\xi_1r-\xi_3}\left(1+O\left(\frac1{d_2}\right)\right)\int_0^\infty\tau^{\xi_1r+\xi_3-1}e^{-\tau-\frac{d_1}{d_2^r}z\tau^r}d\tau.\end{multline*}

If $r\le1$, let $\tau=\frac w{(1-w^r)^{1/r}}$, then $w=\frac \tau{(1+\tau^r)^{1/r}}$, $w^{r-1}dw=\tau^{r-1}\frac{d\tau}{(1+\tau^r)^2}$,
$$
I_z=\int_0^\infty\frac{\tau^{\xi_1r+\xi_3-1}}{(1+\tau^r)^{\xi_1+\xi_3/r+1}}\left(1-\frac\tau{(1+\tau^r)^{1/r}}\right)^{d_2}\left(1-\frac{z\tau^r}{1+\tau^r}\right)^{d_1}d\tau.
$$
In this case, $1/r\ge1$, so $(1+\tau^r)^{1/r}\ge1+(\tau^r)^{1/r}=1+\tau$, $1-\frac\tau{(1+\tau^r)^{1/r}}\ge\frac1{1+\tau}\ge\exp(-\tau)$,
$1-\frac{z\tau^r}{1+\tau^r}\ge\frac1{1+z\tau^r}\ge\exp(-z\tau^r)$,
$$
I_z\ge\int_0^\infty\tau^{\xi_1r+\xi_3-1}e^{-d_2\tau-(zd_1+\xi_1+\xi_3/r+1)\tau^r}d\tau
$$
If $zd_1\ge d_2^r$, then $O\left(\frac1{zd_1}\right)=O\left(\frac1{d_2^r}\right)$, so
\begin{multline*}
I_z\ge\left(1+\frac{\xi_1+\xi_3/r+1}{zd_1}\right)^{-\xi_1-\xi_3/r}\int_0^\infty\tau^{\xi_1r+\xi_3-1}e^{-\frac{d_2\tau}{(1+(\xi_1+\xi_3/r+1)/(zd_1))^{1/r}}-zd_1\tau^r}d\tau
\\\ge\left(1+O\left(\frac1{zd_1}\right)\right)\int_0^\infty\tau^{\xi_1r+\xi_3-1}e^{-d_2\tau-zd_1\tau^r}d\tau=
\left(1+O\left(\frac1{d_2^r}\right)\right)d_2^{-\xi_1r-\xi_3}\int_0^\infty\tau^{\xi_1r+\xi_3-1}e^{-\tau-\frac{d_1}{d_2^r}z\tau^r}d\tau.
\end{multline*}
If $zd_1<d_2^r$, then $\exp\left(-(\xi_1+\xi_3/r+1)\frac{\tau^r}{d_2^r}\right)\ge1-(\xi_1+\xi_3/r+1)\frac{\tau^r}{d_2^r}$,
$$
I_z\ge d_2^{-\xi_1r-\xi_3}\int_0^\infty\tau^{\xi_1r+\xi_3-1}e^{-\tau-zd_1\frac{\tau^r}{d_2^r}}\left(1-(\xi_1+\xi_3/r+1)\frac{\tau^r}{d_2^r}\right)d\tau,
$$
$\int_0^\infty\tau^{c}e^{-\tau-\tau^r(zd_1/d_2^r)}d\tau=\Theta(1)$ for $c=\xi_1r+\xi_3-1$ and for $c=\xi_1r+\xi_3+r-1$, so
$$
I_z\ge d_2^{-\xi_1r-\xi_3}\left(1+O\left(\frac1{d_2^r}\right)\right)\int_0^\infty\tau^{\xi_1r+\xi_3-1}e^{-\tau-zd_1\frac{\tau^r}{d_2^r}}d\tau.
$$

Integration by $z$ completes the proof.
\end{proof}
\begin{lemma}\label{kappaasymp2} Let $r>0,\xi_1>0,\xi_3>0$ be constants depending only on the model parameters. Then
$$I_1(c_1,c_1r,\xi_1,d_1,\xi_3,d_2)=
d_1^{-\xi_1}d_2^{-\xi_3}\kappa\left(\xi_1,\xi_3,r,\frac{d_1}{d_2^r}\right)\left(1+O\left(\frac1{d_1}\right)+O\left(\frac1{d_2}\right)\right).$$
\end{lemma}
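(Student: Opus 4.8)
Proof of Lemma~\ref{kappaasymp2} (proposal).

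The plan is to bootstrap the cruder estimate of Lemma~\ref{i1asymp} using two auxiliary facts. The first is the ``complement'' identity
$$I_1(c_1,c_2,\xi_1,\xi_2,\xi_3,\xi_4)+I_1(c_2,c_1,\xi_3,\xi_4,\xi_1,\xi_2)=\Beta(\xi_1,\xi_2+1)\Beta(\xi_3,\xi_4+1),$$
valid for all $c_1,c_2>0$ by splitting $[0,1]^2$ along $v^{c_1}=w^{c_2}$ (this is exactly the splitting recorded just before \eqref{g3recur}). The second is the reflection formula of Lemma~\ref{kappaasymp}. I will also use the standard gamma-ratio asymptotic $\Beta(a,n+1)=\Gamma(a)\,n^{-a}\bigl(1+O(1/n)\bigr)$ \cite[6.1.47]{specfunc}, with $a\in\{\xi_1,\xi_3\}$ fixed. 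The argument splits into three cases according to the (fixed) value of $r$ and the relative size of $d_1$ and $d_2^r$.

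If $r\ge1$ then $\min(r,1)=1$, so Lemma~\ref{i1asymp} already gives relative error $O(1/d_2)$, which is a fortiori $O(1/d_1)+O(1/d_2)$. If $r<1$ and $d_1\le d_2^r$, then $\min(r,1)=r$ and Lemma~\ref{i1asymp} gives relative error $O(1/d_2^{r})$; but $d_1\le d_2^r$ forces $1/d_2^r\le1/d_1$, so the error is again $O(1/d_1)+O(1/d_2)$. Both of these cases are immediate.

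The substantive case is $r<1$ and $d_1>d_2^r$. Here I apply Lemma~\ref{i1asymp} not to $I_1(c_1,c_1r,\xi_1,d_1,\xi_3,d_2)$ but to $I_1(c_1r,c_1,\xi_3,d_2,\xi_1,d_1)$, which fits the template of that lemma with ratio $1/r>1$; since $\min(1/r,1)=1$, this gives
$$I_1(c_1r,c_1,\xi_3,d_2,\xi_1,d_1)=d_1^{-\xi_1}d_2^{-\xi_3}\,\kappa\!\left(\xi_3,\xi_1,\tfrac1r,\tfrac{d_2}{d_1^{1/r}}\right)\bigl(1+O(1/d_1)\bigr).$$
By Lemma~\ref{kappaasymp} (with parameters $\xi_3,\xi_1,1/r$, and noting $(d_2/d_1^{1/r})^{-r}=d_1/d_2^r$) the $\kappa$-factor equals $\Gamma(\xi_1)\Gamma(\xi_3)-\kappa(\xi_1,\xi_3,r,d_1/d_2^r)$. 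Substituting this, together with $\Beta(\xi_1,d_1+1)\Beta(\xi_3,d_2+1)=\Gamma(\xi_1)\Gamma(\xi_3)\,d_1^{-\xi_1}d_2^{-\xi_3}(1+O(1/d_1)+O(1/d_2))$, into the complement identity and cancelling the leading $\Gamma(\xi_1)\Gamma(\xi_3)$ terms yields
$$I_1(c_1,c_1r,\xi_1,d_1,\xi_3,d_2)=d_1^{-\xi_1}d_2^{-\xi_3}\Bigl(\kappa(\xi_1,\xi_3,r,\tfrac{d_1}{d_2^r})+\Gamma(\xi_1)\Gamma(\xi_3)\,O\bigl(\tfrac1{d_1}+\tfrac1{d_2}\bigr)\Bigr),$$
where I used $0\le\Gamma(\xi_1)\Gamma(\xi_3)-\kappa(\xi_1,\xi_3,r,d_1/d_2^r)\le\Gamma(\xi_1)\Gamma(\xi_3)$ to absorb the $O(1/d_1)$ factor multiplying the flipped $\kappa$. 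Finally, $d_1>d_2^r$ means $d_1/d_2^r>1$, so by the stated monotonicity of $\kappa(\xi_1,\xi_3,r,\cdot)$ we have $\kappa(\xi_1,\xi_3,r,d_1/d_2^r)\ge\kappa(\xi_1,\xi_3,r,1)>0$, i.e.\ $\Gamma(\xi_1)\Gamma(\xi_3)=O\bigl(\kappa(\xi_1,\xi_3,r,d_1/d_2^r)\bigr)$; hence the error term is $O\bigl(\kappa(\xi_1,\xi_3,r,d_1/d_2^r)(1/d_1+1/d_2)\bigr)$ and the claimed formula follows.

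The one genuinely delicate point is this last absorption. Running the complement/reflection computation blindly produces an additive error of order $\Gamma(\xi_1)\Gamma(\xi_3)\cdot O(1/d_1+1/d_2)$, which is useless next to $\kappa(\xi_1,\xi_3,r,d_1/d_2^r)$ precisely when that $\kappa$ is small, i.e.\ when $d_1\ll d_2^r$. What makes everything fit is that the symmetry route is invoked only when $d_1>d_2^r$, where $\kappa$ is of constant order and absolute errors coincide with relative ones; in the complementary regime $d_1\le d_2^r$ the native error $O(1/d_2^r)$ of Lemma~\ref{i1asymp} is already $\le O(1/d_1)$, so no symmetry is needed. Verifying that the two regimes dovetail, and tracking which of $1/d_1$, $1/d_2$, $1/d_2^r$ dominates in each, is the main bookkeeping.
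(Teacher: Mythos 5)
Your proposal is correct and follows essentially the same route as the paper: the easy regimes are dispatched by Lemma~\ref{i1asymp} directly, and for $r<1$ with $d_1$ large relative to $d_2^r$ you use the complement identity, apply Lemma~\ref{i1asymp} to the flipped integral $I_1(c_1r,c_1,\xi_3,d_2,\xi_1,d_1)$, and convert via Lemma~\ref{kappaasymp}, exactly as the paper does. The only (harmless) difference is the absorption step: the paper takes the cutoff $d_1>Cd_2^r$ with $C$ large and bounds $\kappa\left(\xi_3,\xi_1,\frac1r,\frac{d_2}{d_1^{1/r}}\right)\le\frac12\Gamma(\xi_1)\Gamma(\xi_3)$ explicitly, whereas you take cutoff $d_1>d_2^r$ and lower-bound $\kappa\left(\xi_1,\xi_3,r,\frac{d_1}{d_2^r}\right)\ge\kappa(\xi_1,\xi_3,r,1)>0$ by monotonicity, which is equally valid and slightly more economical.
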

\begin{proof} If $r\ge1$ or $d_1\le Cd_2^r$, the statement follows immediately from the previous lemma. Assume $r<1$ and $d_1>Cd_2^r$,
where the constant $C$ will be selected later.
\begin{multline*}
\iint_{0\le v\le w^{r}\le 1}v^{\xi_1-1}(1-v)^{d_1}w^{\xi_3-1}(1-w)^{d_2}dvdw\\=
\Beta(\xi_1,d_1+1)\Beta(\xi_3,d_2+1)-\iint_{0\le w^{r}\le v\le 1}v^{\xi_1-1}(1-v)^{d_1}w^{\xi_3-1}(1-w)^{d_2}dvdw\\=
\Gamma(\xi_1)\Gamma(\xi_3)d_1^{-\xi_1}d_2^{-\xi_3}\left(1+O\left(\frac1{d_1}\right)+O\left(\frac1{d_2}\right)\right)-
\iint_{0\le w\le v^{1/r}\le1}w^{\xi_3-1}(1-w)^{d_2}v^{\xi_1-1}(1-v)^{d_1}dvdw\\=
\Gamma(\xi_1)\Gamma(\xi_3)d_1^{-\xi_1}d_2^{-\xi_3}\left(1+O\left(\frac1{d_1}\right)+O\left(\frac1{d_2}\right)\right)-
d_1^{-\xi_1}d_2^{-\xi_3}\kappa\left(\xi_3,\xi_1,\frac1r,\frac{d_2}{d_1^{1/r}}\right)\left(1+O\left(\frac1{d_1}\right)\right)
\end{multline*}
Lemma \ref{kappaasymp} implies that
$$\Gamma(\xi_1)\Gamma(\xi_3)-\kappa\left(\xi_3,\xi_1,\frac1r,\frac{d_2}{d_1^{1/r}}\right)=
\kappa\left(\xi_1,\xi_3,r,\frac{d_1}{d_2^r}\right).$$
On the other hand,
$\kappa(c_1,c_2,r,x)=x^{c_1}\int_0^1 z^{c_1-1}dz\int_0^\infty\tau^{c_1r+c_2-1}e^{-\tau-xz\tau^r}d\tau\le x^{c_1}\frac{\Gamma(c_1r+c_2)}{\xi_3}$, so
$$\kappa\left(\xi_3,\xi_1,\frac1r,\frac{d_2}{d_1^{1/r}}\right)\le C^{-\xi_3/r}\frac{\Gamma(\xi_1+\xi_3/r)}{\xi_3}
\le\frac12\Gamma(\xi_1)\Gamma(\xi_3)$$
if $C$ is sufficiently large, so
$O(1)=O\left(\Gamma(\xi_1)\Gamma(\xi_3)-\kappa\left(\xi_3,\xi_1,\frac1r,\frac{d_2}{d_1^{1/r}}\right)\right)$.
\end{proof}

\begin{lemma}\label{ishift} Let $c_1>0,c_2>0,\xi_1>0,\xi_3>0,\xi_5>-\xi_3\cin$ be constants depending only on the model parameters. Let $d_1\ge1$, $d_2\ge1$. Then
$$
I_1(c_1,c_2,\xi_1,d_1\pm1,\xi_3,d_2)=I_1(c_1,c_2,\xi_1,d_1,\xi_3,d_2)\left(1+O\left(\frac1{d_1}\right)\right),
$$
$$
I_1(c_1,c_2,\xi_1,d_1,\xi_3,d_2\pm1)=I_1(c_1,c_2,\xi_1,d_1,\xi_3,d_2)\left(1+O\left(\frac1{d_2}\right)\right).
$$
$$
I_2(c_1,c_2,\xi_1,d_1\pm1,\xi_3,d_2,\xi_5)=I_2(c_1,c_2,\xi_1,d_1,\xi_3,d_2,\xi_5)\left(1+O\left(\frac1{d_1}\right)\right),
$$
$$
I_2(c_1,c_2,\xi_1,d_1,\xi_3,d_2\pm1,\xi_5)=I_2(c_1,c_2,\xi_1,d_1,\xi_3,d_2,\xi_5)\left(1+O\left(\frac1{d_2}\right)\right).
$$
\end{lemma}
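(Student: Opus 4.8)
The plan is to deduce everything from the recurrence relations of Lemmas \ref{i1recur} and \ref{i2recur} combined with the obvious monotonicity of $I_1$ and $I_2$; I would avoid estimating the integrals directly, since the coupling constraint $v^{c_1}\le w^{c_2}$ and, for $I_2$, the extra $w$-dependent factor $\int_{w^{1/c_1}}^1 t^{\xi_5-1}\,dt$ make a naive ``integrate one variable, compare'' argument unpleasant near the endpoints.

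First I would record two elementary facts. On the region of integration $0\le v\le 1$ and $0\le w\le 1$, and the inner integral $\int_{w^{1/c_1}}^1 t^{\xi_5-1}\,dt$ is positive for $0<w<1$; hence $I_1>0$ and $I_2>0$, and since replacing $d_1$ by $d_1+1$ multiplies the integrand by $1-v\in[0,1]$ while replacing $d_2$ by $d_2+1$ multiplies it by $1-w\in[0,1]$, both $I_1$ and $I_2$ are non-increasing in $d_1$ and in $d_2$. In particular $I_1(\dots,d_1-1,\dots,d_2)\ge I_1(\dots,d_1,\dots,d_2)\ge I_1(\dots,d_1+1,\dots,d_2)$, and similarly for shifts of $d_2$ and for $I_2$.

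Next I would specialise the first identity of Lemma \ref{i1recur} to $\xi_2=d_1\ge1$, $\xi_4=d_2\ge1$, suppressing the fixed arguments $c_1,c_2,\xi_1,\xi_3$:
\[
(c_2(\xi_1+d_1)+c_1(\xi_3+d_2))\,I_1(d_1,d_2)=c_1 d_2\, I_1(d_1,d_2-1)+c_2 d_1\, I_1(d_1-1,d_2).
\]
Solving for $c_2 d_1\,I_1(d_1-1,d_2)$ and bounding $I_1(d_1,d_2-1)\ge I_1(d_1,d_2)$ gives $I_1(d_1-1,d_2)\le(1+\tfrac{c_2\xi_1+c_1\xi_3}{c_2 d_1})I_1(d_1,d_2)$; together with the monotonicity lower bound this is $I_1(d_1-1,d_2)=I_1(d_1,d_2)(1+O(1/d_1))$. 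Symmetrically, solving for $c_1 d_2\,I_1(d_1,d_2-1)$ and using $I_1(d_1-1,d_2)\ge I_1(d_1,d_2)$ gives $I_1(d_1,d_2-1)=I_1(d_1,d_2)(1+O(1/d_2))$. The $+1$ versions follow by applying these with $d_1$ (resp.\ $d_2$) replaced by $d_1+1$ (resp.\ $d_2+1$) and inverting, since $1/(1+\eta)=1+O(\eta)$ for $\eta\ge0$. For $I_2$ I would run exactly the same argument with the first identity of Lemma \ref{i2recur},
\[
(c_2(\xi_1+d_1)+c_1(\xi_3+d_2))\,I_2(d_1,d_2)=I_1(c_1,c_2,\xi_1,d_1,\xi_3+\tfrac{\xi_5}{c_1},d_2)+c_1 d_2\, I_2(d_1,d_2-1)+c_2 d_1\, I_2(d_1-1,d_2),
\]
observing that the extra term $I_1(c_1,c_2,\xi_1,d_1,\xi_3+\xi_5/c_1,d_2)$ is nonnegative and finite (the hypothesis on $\xi_5$ makes its fifth argument $\xi_3+\xi_5/c_1$ positive, which is the same integrability condition that makes $I_2$ well-defined), so dropping it only strengthens the two inequalities needed.

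There is no real obstacle here; the only point of care is the uniformity of the $+1$ shifts for small values of the shifted index. This is handled automatically, because the estimate coming out of the recurrence, $I(d_1-1,d_2)\le(1+\mathrm{const}/d_1)\,I(d_1,d_2)$, holds for every $d_1\ge1$ with a constant not depending on $d_2$ (and likewise in the $d_2$ variable), so inverting it sandwiches $I(d_1+1,d_2)/I(d_1,d_2)$ between $(1+\mathrm{const}/(d_1+1))^{-1}$ and $1$, which is $1+O(1/d_1)$ uniformly for all $d_1\ge1$, $d_2\ge1$. All implied constants depend only on $c_1,c_2,\xi_1,\xi_3,\xi_5$, i.e.\ only on the model parameters, as claimed.
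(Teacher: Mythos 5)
Your proof is correct and follows essentially the same route as the paper: monotonicity of $I_1,I_2$ in $d_1,d_2$ combined with the recurrences of Lemmas \ref{i1recur} and \ref{i2recur} (dropping the nonnegative $I_1$ term in the latter), which is exactly the published argument. The handling of the $+1$ shifts by applying the bound at $d_1+1$, $d_2+1$ and inverting is a harmless rephrasing of the same estimate.
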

\begin{proof}
It is easy to see from the definition of $I_1$ that $I_1(c_1,c_1r,\xi_1,d_1,\xi_3,d_2)\le I_1(c_1,c_1r,\xi_1,d_1-1,\xi_3,d_2)$
and $I_1(c_1,c_1r,\xi_1,d_1,\xi_3,d_2)\le I_1(c_1,c_1r,\xi_1,d_1,\xi_3,d_2-1)$. On the other hand,
Lemma \ref{i1recur} implies
\begin{multline*}
(c_2(\xi_1+d_1)+c_1(\xi_3+d_2))I_1(c_1,c_2,\xi_1,d_1,\xi_3,d_2)=c_1d_2I_1(c_1,c_2,\xi_1,d_1,\xi_3,d_2-1)+c_2d_1I_1(c_1,c_2,\xi_1,d_1-1,\xi_3,d_2)\\
\ge c_1d_2I_1(c_1,c_2,\xi_1,d_1,\xi_3,d_2)+c_2d_1I_1(c_1,c_2,\xi_1,d_1-1,\xi_3,d_2),
\end{multline*}
$$
(c_2\xi_1+c_1\xi_3+c_2d_1)I_1(c_1,c_2,\xi_1,d_1,\xi_3,d_2)\ge c_2d_1I_1(c_1,c_2,\xi_1,d_1-1,\xi_3,d_2),
$$
$$
I_1(c_1,c_2,\xi_1,d_1-1,\xi_3,d_2)\le\left(1+\frac{c_2\xi_1+c_1\xi_3}{c_2d_1}\right)I_1(c_1,c_2,\xi_1,d_1,\xi_3,d_2),
$$
$$
I_1(c_1,c_2,\xi_1,d_1,\xi_3,d_2)\ge\left(1-\frac{c_2\xi_1+c_1\xi_3}{c_2d_1}\right)I_1(c_1,c_2,\xi_1,d_1-1,\xi_3,d_2).
$$
This proves the first statement of the lemma, other are analogous (for $I_2$, use Lemma \ref{i2recur}
instead of Lemma \ref{i1recur} and note that $I_1\ge0$).
\end{proof}

\begin{lemma}\label{i2asymp} Let $c_1>0,r>0,\xi_1>0,\xi_3>0$ be constants depending only on the model parameters.
If $\xi_5\ne0$, then
\begin{multline*}
I_2(c_1,c_1r,\xi_1,d_1,\xi_3,d_2,\xi_5)=\frac1{\xi_5}d_1^{-\xi_1}\Bigg(
d_2^{-\xi_3}\kappa\left(\xi_1,\xi_3,r,\frac{d_1}{d_2^r}\right)\left(1+O\left(\frac1{d_1}+\frac1{d_2}\right)\right)\\
-d_2^{-\xi_3-\frac{\xi_5}{c_1}}\kappa\left(\xi_1,\xi_3+\frac{\xi_5}{c_1},r,\frac{d_1}{d_2^r}\right)\left(1+O\left(\frac1{d_1}+\frac1{d_2}\right)\right)
\Bigg).
\end{multline*}
If $\xi_5=0$, then
\begin{multline*}
I_2(c_1,c_1r,\xi_1,d_1,\xi_3,d_2,0)=\frac1{c_1}d_1^{-\xi_1}d_2^{-\xi_3}
\left(\kappa\left(\xi_1,\xi_3,r,\frac{d_1}{d_2^r}\right)\ln d_2-\frac{\partial}{\partial\xi_3}\kappa\left(\xi_1,\xi_3,r,\frac{d_1}{d_2^r}\right)\right)
\\+O\left(d_1^{-\xi_1}d_2^{-\xi_3}\left(\frac{\ln d_2}{d_1}+\frac1{d_2}\right)\right).
\end{multline*}
\end{lemma}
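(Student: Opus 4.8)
The plan is to reduce $I_2$ to the asymptotics of $I_1$ already established in Lemma~\ref{kappaasymp2}, treating the cases $\xi_5\ne0$ and $\xi_5=0$ separately according to the form of the inner integral $\int_{w^{1/c_1}}^1 t^{\xi_5-1}\,dt$.

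\emph{Case $\xi_5\ne0$.} Here the inner integral equals $\frac{1-w^{\xi_5/c_1}}{\xi_5}$, so multiplying out and recalling the definition of $I_1$ gives
\[
I_2(c_1,c_1r,\xi_1,d_1,\xi_3,d_2,\xi_5)=\frac1{\xi_5}\Bigl(I_1(c_1,c_1r,\xi_1,d_1,\xi_3,d_2)-I_1\bigl(c_1,c_1r,\xi_1,d_1,\xi_3+\tfrac{\xi_5}{c_1},d_2\bigr)\Bigr).
\]
The shifted exponent $\xi_3+\xi_5/c_1$ is positive (this is exactly the hypothesis guaranteeing convergence of $I_2$) and, like $\xi_3$, is a constant depending only on the model parameters, so Lemma~\ref{kappaasymp2} applies to each of the two $I_1$'s. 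Substituting the two expansions and dividing by $\xi_5$ produces the claimed formula verbatim.

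\emph{Case $\xi_5=0$.} Now the inner integral is $-\tfrac1{c_1}\ln w$. The domain $\{0\le v\le w^r\le1\}$ does not involve $\xi_3$, and the only $\xi_3$-dependence in the integrand of $I_1$ is the factor $w^{\xi_3-1}$, with $\partial_{\xi_3}w^{\xi_3-1}=w^{\xi_3-1}\ln w$; differentiating under the integral sign (legitimate since the integral defining $I_1$ converges locally uniformly for $\Re\xi_3>0$) shows
\[
I_2(c_1,c_1r,\xi_1,d_1,\xi_3,d_2,0)=-\frac1{c_1}\,\frac{\partial}{\partial\xi_3}I_1(c_1,c_1r,\xi_1,d_1,\xi_3,d_2).
\]
Write $I_1=M(\xi_3)+g(\xi_3)$ with $M(\xi_3)=d_1^{-\xi_1}d_2^{-\xi_3}\kappa(\xi_1,\xi_3,r,d_1/d_2^r)$. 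Since $\xi_3$ enters $\kappa$ only through its second slot, $-\tfrac1{c_1}M'(\xi_3)=\tfrac1{c_1}d_1^{-\xi_1}d_2^{-\xi_3}\bigl(\kappa(\xi_1,\xi_3,r,d_1/d_2^r)\ln d_2-\tfrac{\partial}{\partial\xi_3}\kappa(\xi_1,\xi_3,r,d_1/d_2^r)\bigr)$, which is exactly the asserted main term. It remains to bound $g'(\xi_3)$.

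This last step is the main obstacle, since Lemma~\ref{kappaasymp2} controls $g$ only pointwise, whereas I need its $\xi_3$-derivative; I would obtain it by complex analysis. The map $\xi_3\mapsto I_1$ is holomorphic for $\Re\xi_3>0$, and $\xi_3\mapsto\kappa(\xi_1,\xi_3,r,x)$ is holomorphic for $\Re(\xi_1r+\xi_3)>0$ (its defining integral converges there), so $g$ is holomorphic on a fixed complex neighbourhood of the target value of $\xi_3$; the estimate of Lemma~\ref{kappaasymp2} is uniform for $\xi_3$ in a compact set and survives for complex arguments, because only $|w^{\xi_3-1}|=w^{\Re\xi_3-1}$ enters its proof. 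The trick is to apply the estimate not to $g$ but to the normalised holomorphic function $d_2^{\xi_3}g(\xi_3)=d_2^{\xi_3}I_1-d_1^{-\xi_1}\kappa(\xi_1,\xi_3,r,d_1/d_2^r)$: the factor $d_2^{\xi_3}$ cancels the $d_2^{\pm\Re\xi_3}$ that would otherwise vary by $d_2^{O(1)}$ over a disc of fixed radius, leaving $|d_2^{\xi_3}g(\xi_3)|=O\bigl(d_1^{-\xi_1}(1/d_1+1/d_2)\bigr)$ uniformly on a disc of fixed radius $\rho$. Cauchy's estimate then bounds $\partial_{\xi_3}(d_2^{\xi_3}g)$ by the same order, and unwinding $\partial_{\xi_3}g=d_2^{-\xi_3}\partial_{\xi_3}(d_2^{\xi_3}g)-g\ln d_2$ yields $g'(\xi_3)=O\bigl(d_1^{-\xi_1}d_2^{-\xi_3}(\tfrac{\ln d_2}{d_1}+\tfrac1{d_2})\bigr)$; combined with $-\tfrac1{c_1}M'(\xi_3)$ this gives the statement.
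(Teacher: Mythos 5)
Your treatment of the case $\xi_5\ne0$ is exactly the paper's: the identity $I_2=\frac1{\xi_5}\bigl(I_1(c_1,c_1r,\xi_1,d_1,\xi_3,d_2)-I_1(c_1,c_1r,\xi_1,d_1,\xi_3+\xi_5/c_1,d_2)\bigr)$ followed by two applications of Lemma \ref{kappaasymp2}, so that half is fine. For $\xi_5=0$ you take a genuinely different route: you write $I_2=-\frac1{c_1}\partial_{\xi_3}I_1$ and try to differentiate the asymptotics of Lemma \ref{kappaasymp2} via Cauchy's estimate, whereas the paper sandwiches $\iint v^{\xi_1-1}(1-v)^{d_1}w^{\xi_3-1}(1-w)^{d_2}(-\ln w)\,dv\,dw$ between exponential-kernel integrals and evaluates $\int_0^1w^{\xi_3-1}(1-w)^{d_2}(-\ln w)\,dw=-\partial_{\xi_3}\Beta(\xi_3,d_2+1)$ explicitly through $\Gamma'/\Gamma$; that explicit evaluation is what produces the clean $1/d_2$ in the remainder.

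Two problems with your $\xi_5=0$ argument. First, the step you yourself flag as the main obstacle is under-proved: Lemma \ref{kappaasymp2} is obtained by a two-sided real-variable sandwich that uses positivity of the integrand, so it does not ``survive for complex arguments'' merely because $|w^{\xi_3-1}|=w^{\Re\xi_3-1}$ — a lower bound has no meaning for a complex integral. This is repairable (since $(1-v)^{d_1}(1-w)^{d_2}\le e^{-d_1v-d_2w}$ pointwise and the main term is exactly the exponential-domain integral, one gets $|I_1(\xi_3)-M(\xi_3)|\le M(\Re\xi_3)-I_1(\Re\xi_3)$, and the real-parameter lemma, made uniform over a compact $\xi_3$-interval, then gives the disc bound), but that argument has to be supplied. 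Second, and this is the genuine gap, your final bound does not follow from your own steps: from $\partial_{\xi_3}g=d_2^{-\xi_3}\partial_{\xi_3}(d_2^{\xi_3}g)-g\ln d_2$ and $|g|=O\bigl(d_1^{-\xi_1}d_2^{-\xi_3}(1/d_1+1/d_2)\bigr)$ you only obtain $g'(\xi_3)=O\bigl(d_1^{-\xi_1}d_2^{-\xi_3}\ln d_2\,(1/d_1+1/d_2)\bigr)$; the term $g\ln d_2$ contributes $\ln d_2/d_2$, which is not $O(\ln d_2/d_1+1/d_2)$ when $d_1\gg d_2$. Cauchy's estimate sees only moduli and cannot recover the cancellation that removes this logarithm, so as written you prove the lemma only with the weaker remainder $O\bigl(d_1^{-\xi_1}d_2^{-\xi_3}\ln d_2(1/d_1+1/d_2)\bigr)$, which would also propagate an extra logarithm into the $\ocin+\ocout=1$ case of Theorem 3. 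To reach the stated $1/d_2$ you need something sharper at this point, e.g.\ the paper's direct computation of the $w$-integral.
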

\begin{proof} The first statement follows from the equality
$$
I_2(c_1,c_1r,\xi_1,d_1,\xi_3,d_2,\xi_5)=\frac{I_1(c_1,c_1r,\xi_1,d_1,\xi_3,d_2)-I_1(c_1,c_1r,\xi_1,d_1,\xi_3+\xi_5/c_1,d_2)}{\xi_5},
$$
and Lemma \ref{kappaasymp2}.

The case $\xi_5=0$ is special.
$$
I_2(c_1,c_1r,\xi_1,d_1,\xi_3,d_2,0)=\frac1{c_1}\iint_{0\le v^{c_1}\le w^{c_1r}\le 1}v^{\xi_1-1}(1-v)^{d_1}w^{\xi_3-1}(1-w)^{d_2}(-\ln w)dvdw.
$$

For the upper bound, note that $1-v\le\exp(-v)$, $1-w\le\exp(-w)$:
\begin{multline*}
c_1I_2(c_1,c_1r,\xi_1,d_1,\xi_3,d_2,0)\le\iint_{0\le v^{c_1}\le w^{c_1r}\le1}v^{\xi_1-1}w^{\xi_3-1}e^{-d_1v-d_2w}(-\ln w)dvdw
\\\le\iint_{0\le v\le w^r<\infty}v^{\xi_1-1}w^{\xi_3-1}e^{-d_1v-d_2w}(-\ln w)dvdw
+\iint_{v\ge0,w\ge1}v^{\xi_1-1}w^{\xi_3-1}e^{-d_1v-d_2w}\ln wdvdw;
\end{multline*}
$\int_0^\infty v^{\xi_1-1}e^{-d_1v}dv=d_1^{-\xi_1}\Gamma(\xi_1)$,
$\int_1^\infty w^{\xi_3-1}e^{-d_2w}\ln wdw\le e^{-d_2}\int_1^\infty w^{\xi_3-1}e^{-(w-1)}\ln wdw=O\left(e^{-d_2}\right)$,
so
$$
c_1I_2(c_1,c_1r,\xi_1,d_1,\xi_3,d_2,0)\le\iint_{0\le v\le w^r<\infty}v^{\xi_1-1}w^{\xi_3-1}e^{-d_1v-d_2w}(-\ln w)dvdw
+O\left(d_1^{-\xi_1}e^{-d_2}\right).
$$

For the lower bound, note that
\begin{multline}\label{i2temp}
c_1I_2(c_1,c_1r,\xi_1,d_1,\xi_3,d_2,0)=\left(\int_0^1 v^{\xi_1-1}(1-v)^{d_1}dv\right)\left(\int_0^1 w^{\xi_3-1}(1-w)^{d_2}(-\ln w)dw\right)
\\-\iint_{0\le w\le v^{1/r}\le 1}v^{\xi_1-1}(1-v)^{d_1}w^{\xi_3-1}(1-w)^{d_2}(-\ln w)dvdw;
\end{multline}
$\int_0^1 v^{\xi_1-1}(1-v)^{d_1}dv=\Beta(\xi_1,d_1+1)=\Gamma(\xi_1)d_1^{-\xi_1}\left(1+O\left(\frac1{d_1}\right)\right);$
according to \cite[6.3.18]{specfunc}, we have $\frac{\Gamma'(x)}{\Gamma(x)}=\ln x+O\left(\frac1x\right)$,
so
\begin{multline*}
\int_0^1 w^{\xi_3-1}(1-w)^{d_2}(-\ln w)dw=-\frac\partial{\partial\xi_3}\Beta(\xi_3,d_2+1)=
-\frac{\Gamma'(\xi_3)\Gamma(d_2+1)}{\Gamma(\xi_3+d_2+1)}+\frac{\Gamma(\xi_3)\Gamma(d_2+1)\Gamma'(\xi_3+d_2+1)}{\Gamma(\xi_3+d_2+1)^2}
\\=d_2^{-\xi_3}\left(-\Gamma'(\xi_3)+\Gamma(\xi_3)\ln d_2+O\left(\frac1{d_2}\right)\right);
\end{multline*}
\begin{multline*}
\iint_{0\le w\le v^{1/r}\le 1}v^{\xi_1-1}(1-v)^{d_1}w^{\xi_3-1}(1-w)^{d_2}(-\ln w)dvdw
\\\le\iint_{0\le w\le v^{1/r}\le 1}v^{\xi_1-1}w^{\xi_3-1}e^{-d_1v-d_2w}(-\ln w)dvdw
\\\le\iint_{0\le w\le v^{1/r}<\infty}v^{\xi_1-1}w^{\xi_3-1}e^{-d_1v-d_2w}(-\ln w)dvdw
+\iint_{1\le w\le v^{1/r}<\infty}v^{\xi_1-1}w^{\xi_3-1}e^{-d_1v-d_2w}\ln wdvdw
\\=\iint_{0\le w\le v^{1/r}<\infty}v^{\xi_1-1}w^{\xi_3-1}e^{-d_1v-d_2w}(-\ln w)dvdw+O\left(e^{-d_1-d_2}\right).
\end{multline*}
Since $\int_0^\infty v^{\xi_1-1}e^{-d_1v}dv=\Gamma(\xi_1)d_1^{-\xi_1}$ and $$\int_0^\infty w^{\xi_3-1}e^{-d_2w}(-\ln w)dw
=-\frac\partial{\partial\xi_3}\left(\Gamma(\xi_3)d_2^{-\xi_3}\right)=d_2^{-\xi_3}\left(-\Gamma'(\xi_3)+\Gamma(\xi_3)\ln d_2\right),$$
we have
\begin{multline*}
c_1I_2(c_1,c_1r,\xi_1,d_1,\xi_3,d_2,0)\\\ge d_1^{-\xi_1}d_2^{-\xi_3}O\left(\frac{\ln d_2}{d_1}+\frac1{d_2}\right)
+\iint_{0\le v\le w^r<\infty}v^{\xi_1-1}w^{\xi_3-1}e^{-d_1v-d_2w}(-\ln w)dvdw+O\left(e^{-d_1-d_2}\right).
\end{multline*}
Combine lower and upper bound:
$$
c_1I_2(c_1,c_1r,\xi_1,d_1,\xi_3,d_2,0)=
\iint_{0\le v\le w^r<\infty}v^{\xi_1-1}w^{\xi_3-1}e^{-d_1v-d_2w}(-\ln w)dvdw+
d_1^{-\xi_1}d_2^{-\xi_3}O\left(\frac{\ln d_2}{d_1}+\frac1{d_2}\right).
$$

Replace variables: let $v=w^rz$.
\begin{multline*}
\iint_{0\le v\le w^r<\infty}v^{\xi_1-1}w^{\xi_3-1}e^{-d_1v-d_2w}(-\ln w)dvdw
=-\frac\partial{\partial\xi_3}\iint_{0\le v\le w^r<\infty}v^{\xi_1-1}w^{\xi_3-1}e^{-d_1v-d_2w}dvdw\\=
-\frac\partial{\partial\xi_3}\int_0^1 z^{\xi_1-1}dz\int_0^\infty w^{\xi_1r+\xi_3-1}e^{-d_1w^rz-d_2w}dw=
-\frac\partial{\partial\xi_3}\left(d_1^{-\xi_1}d_2^{-\xi_3}\kappa\left(\xi_1,\xi_3,r,\frac{d_1}{d_2^r}\right)\right)
\\=d_1^{-\xi_1}d_3^{-\xi_3}\left(\kappa\left(\xi_1,\xi_3,r,\frac{d_1}{d_2^r}\right)\ln d_2-\frac\partial{\partial\xi_3}\kappa\left(\xi_1,\xi_3,r,\frac{d_1}{d_2^r}\right)\right).
\end{multline*}
\end{proof}

\begin{proof}[Proof of Theorem 3]
Lemma \ref{mainbinom} and Lemma \ref{exasymp}
imply $\E X(d_1,d_2)=g(\alpha+\gamma,d_1,d_2)+O_{d_1,d_2}(1)$. It remains to apply
Lemmas \ref{kappaasymp2}, \ref{ishift} and \ref{i2asymp} to the definition of $g$.
\end{proof}

\section{Concentration for number of edges}
\begin{proof}[Proof of Theorem 4]
We use the Azuma--Hoeffding inequality.
\begin{theorem} \cite{azuma}, \cite{hoeffding} Let $(X_s)_{s=0}^n$ be a martingale with
$|X_{s+1}-X_s|\le\delta$ for $s=0,\dots,n-1$, and $x>0$. Then
$$
P\left(|X_n-X_0|\ge x\right)\le2\exp\left(-\frac{x^2}{2\delta^2n}\right).
$$
\end{theorem}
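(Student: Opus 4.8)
The plan is to prove this inequality by the exponential moment (Chernoff) method, combined with a convexity bound on the moment generating function of a bounded mean-zero random variable. First I would introduce the natural filtration $(\mathcal F_s)_{s=0}^n$ with respect to which $(X_s)$ is a martingale, and set $Y_s=X_{s+1}-X_s$, so that the martingale property gives $\E(Y_s\mid\mathcal F_s)=0$ while the hypothesis gives $|Y_s|\le\delta$ almost surely. Writing $X_n-X_0=\sum_{s=0}^{n-1}Y_s$, I would control the upper tail for any free parameter $\lambda>0$ by applying Markov's inequality to $e^{\lambda(X_n-X_0)}$:
$$
P(X_n-X_0\ge x)\le e^{-\lambda x}\,\E\,e^{\lambda(X_n-X_0)}.
$$

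The heart of the argument, and the step I expect to be the main obstacle, is a uniform bound on the \emph{conditional} moment generating function of each increment. The key auxiliary fact (Hoeffding's lemma) is that if $Y$ satisfies $\E Y=0$ and $a\le Y\le b$, then $\E\,e^{\lambda Y}\le e^{\lambda^2(b-a)^2/8}$. I would establish this from convexity of $y\mapsto e^{\lambda y}$: for $y\in[a,b]$ one has $e^{\lambda y}\le\frac{b-y}{b-a}e^{\lambda a}+\frac{y-a}{b-a}e^{\lambda b}$, so taking expectations and using $\E Y=0$ reduces the claim to showing $\frac{b}{b-a}e^{\lambda a}-\frac{a}{b-a}e^{\lambda b}\le e^{\lambda^2(b-a)^2/8}$. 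Letting $\varphi(h)$ denote the logarithm of the left-hand side as a function of $h=\lambda(b-a)$, a short computation yields $\varphi(0)=\varphi'(0)=0$ and $\varphi''(h)\le\frac14$ for all $h$, whence $\varphi(h)\le h^2/8$ by Taylor's theorem. The subtlety is that the increments are only mean-zero \emph{conditionally}, so this bound must be applied to the conditional law of $Y_s$ given $\mathcal F_s$; this is legitimate because the resulting bound $e^{\lambda^2\delta^2/2}$ is deterministic. Since $|Y_s|\le\delta$ forces $b-a\le2\delta$, I obtain $\E(e^{\lambda Y_s}\mid\mathcal F_s)\le e^{\lambda^2\delta^2/2}$.

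With this in hand I would peel off the increments one at a time using the tower property. Conditioning on $\mathcal F_{n-1}$ and using that $X_{n-1}-X_0$ is $\mathcal F_{n-1}$-measurable,
$$
\E\,e^{\lambda(X_n-X_0)}=\E\!\left[e^{\lambda(X_{n-1}-X_0)}\,\E\!\left(e^{\lambda Y_{n-1}}\mid\mathcal F_{n-1}\right)\right]\le e^{\lambda^2\delta^2/2}\,\E\,e^{\lambda(X_{n-1}-X_0)}.
$$
Iterating $n$ times gives $\E\,e^{\lambda(X_n-X_0)}\le e^{n\lambda^2\delta^2/2}$, and hence $P(X_n-X_0\ge x)\le e^{-\lambda x+n\lambda^2\delta^2/2}$. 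Optimizing the bound over the free parameter, which is minimized at $\lambda=x/(n\delta^2)$, produces $P(X_n-X_0\ge x)\le e^{-x^2/(2n\delta^2)}$. Finally I would apply the identical argument to the martingale $(-X_s)$, which satisfies the same increment bound, to control the lower tail $P(X_n-X_0\le-x)\le e^{-x^2/(2n\delta^2)}$; a union bound over the two one-sided events supplies the factor $2$ and completes the proof.
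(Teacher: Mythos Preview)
Your proof is correct and is the standard Chernoff/Hoeffding-lemma argument for the Azuma--Hoeffding inequality. Note, however, that the paper does not prove this theorem at all: it is simply quoted from \cite{azuma} and \cite{hoeffding} as a tool and applied to the edge-exposure martingale in the proof of Theorem~4. So there is no ``paper's own proof'' to compare against; you have supplied a valid self-contained proof where the paper merely cites the result.
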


We fix $d_1,d_2,t$ and denote $X=X(t,d_1,d_2)$. Let $G$ be a random graph in $\mathcal G(t)$; it has $t$ edges,
sorted by the creation time. Let $G^{(s)}$ be a graph with $s$ first edges.
Let $X_s=\E(X|G^{(s)}), s=t_0,\dots,t$. In this sequence $X_{t_0}=\E X$, $X_{t}=X$.
By definition of the probabilistic space, the sequence $X_s$ is a martingale.
We will estimate possible differences between adjacent elements of the sequence.

We fix any $s$ from $0$ to $t-1$. Let $v$ and $w$ be the source and the target of the last edge in $G^{(s+1)}$,
so $v$ and $w$ are random quantities depending on $G$. Note that the pair $(v,w)$ also determines
which one of processes $(\dagger)$, $(\ddagger)$, $(\dagger\dagger)$ was used.
By definition
$$\begin{array}{rcrl}
X_s &=& \sum_{i,j} \Pr(v=i,w=j) & \E(X|G^{(s)},v=i,w=j),\\
X_{s+1} &=& & \E(X|G^{(s)},v=v(G^{(s+1)}),w=w(G^{(s+1)})),
\end{array}$$
where the sum is over all pairs of nodes of $G$. Hence it is clear that
$$
\min_{i,j} \E(X|G^{(s)},v=i,w=j)\le X_s,X_{s+1}\le\max_{i,j}\E(X|G^{(s)},v=i,w=j),
$$
$$
|X_s-X_{s+1}|\le\max_{i,j} \E(X|G^{(s)},v=i,w=j)-\min_\gamma \E(X|G^{(s)},v=i,w=j).
$$
Let $\gamma_1\in\arg\min \E(X|G^{(s)},v=i,w=j)$ and $\gamma_2\in\arg\max \E(X|G^{(s)},v=i,w=j)$.
It is sufficient to prove an upper bound for
$$
\E(X|G^{(s)},v=i_2,w=j_2)-\E(X|G^{(s)},v=i_1,w=j_1).
$$

Replace the initial graph $G_0$ by $G^{(s)}\cup(i_1,j_1)$ and consider
the quantity $E_X$ in that model; denote it as $E_1(T,N,d_1,d_2)$.
Similarly, let $E_2(T,N,d_1,d_2)$ denote the analogue of $E_X$ in the model
with the initial graph $G^{(s)}\cup(i_2,j_2)$. Then
$\E(X|G^{(s)},v=i_1,w=j_1)=\E(E_1(t-s-1,N,d_1,d_2))$ and $\E(X|G^{(s)},v=i_2,w=j_2)=\E(E_2(t-s-1,N,d_1,d_2))$,
where $N$ has binomial distribution with parameters $t-s-1$ and $\alpha+\gamma$.
We cannot use Theorem 3 for $E_1$ and $E_2$ directly because the remainder term of Theorem 3 depends
on the initial graph, but \eqref{exrecur} and \eqref{recur1} are still valid.
Here $A_{in}$ and $A_{out}$ can differ in models
with initial graphs $G^{(s)}\cup(i_1,j_1)$ and $G^{(s)}\cup(i_2,j_2)$ due to
different number of vertices.
However, apriori bounds $0\le n_{in}(t,d)\le t$ and $0\le X(t,d_1,d_2)\le t$
imply $E_i(T,N,d_1,d_2)\frac{d_1+\dout}{T+\dout N+A'_{out}}=
E_i(T,N,d_1,d_2)\frac{d_1+\dout}{T+\dout N+A_{out}}+O_{d_1,d_2}\left(\frac1T\right)$
(and similarly for other terms of \eqref{exrecur} and \eqref{recur1}), so
the difference $E_1(T,N,d_1,d_2)-E_2(T,N,d_1,d_2)$ satisfies to
\eqref{exrecur} with $E_{in,d_2-1}(T,N-1)$ and $E_{out,d_1-1}(T,N-1)$
replaced by the corresponding differences, which in turn satisfy to \eqref{recur1} without
terms $\frac{\alpha N}{(\alpha+\gamma)(T+1)}$ and $\frac{\gamma N}{(\alpha+\gamma)(T+1)}$.

It remains to note that adding an edge $(i_1,j_1)$ changes $X(t,d_1,d_2)$
by at most $d_1+d_2$, so the initial value $E_1(0,0,d_1,d_2)-E_2(0,0,d_1,d_2)$ is at most $2(d_1+d_2)$.
Consequently, $E_1(T,N,d_1,d_2)-E_2(T,N,d_1,d_2)=O_{d_1,d_2}(1)$
similar to Lemma \ref{exasymp}.

Therefore, the sequence $(X_s)$ satisfies the condition of Theorem 5 with $n=t-t_0$ and $\delta=O_{d_1,d_2}(1)$.
Substituting $x=\sqrt{t}\ln t$ in Theorem 5, we obtain Theorem 4.
\end{proof}

\end{document}